\journal{Journal of \LaTeX\ Templates}
\newcommand\BJ{\bgroup\markoverwith
  {\textcolor{yellow}{\rule[-.5ex]{2pt}{3.5ex}}}\ULon}
\newcommand{\rk}{{\rm rank}}
\newcommand{\R}{{\mathcal{R}}}
\newcommand{\Ind}{{\rm Ind}}
\newcommand{\M}{\widehat{A}}
\newcommand{\MP}{\widehat{A}^{\rm P}}
\newcommand{\PP}{{\rm P}}
\newcommand{\MD}{\widehat{A}^{ {+}}}
\begin{document}

\begin{frontmatter}

\title{ The Dual Index and Dual Core Generalized Inverse}

\author[1]{Hongxing Wang}
\ead{winghongxing0902@163.com}


\author[1]{Ju Gao}
 \ead{3160369785@qq.com}

\address[1]{School of Mathematics and Physics,
Guangxi Key Laboratory of Hybrid Computation and IC Design Analysis,
Guangxi University for Nationalities,
Nanning 530006, China}

\begin{abstract}
In this paper, we introduce the dual index and dual core generalized inverse (DCGI). By applying rank equation, generalized inverse and matrix decomposition, we give several characterizations of the dual  index when it is equal to one. And we get that if DCGI exists, then it is unique. We derive a compact formula for DCGI and a series of equivalent characterizations of the existence of the inverse.  It is worth nothing that the dual index of $\M$ is equal to one if and only if  its DCGI  exists. When the dual index of $\M$ is equal to one, we study  dual Moore-Penrose generalized inverse (DMPGI) and dual group generalized inverse (DGGI), and consider the relationships among DCGI,  DMPGI, DGGI, Moore-Penrose dual generalized inverse (MPDGI) and other dual generalized inverses. In addition, we consider symmetric dual matrix  and its dual generalized inverses.  At last, two examples are given to illustrate the application of DCGI in linear dual equations.
\end{abstract}

\begin{keyword}
Dual core generalized inverse;
Dual index;
Dual Moore-Penrose generalized inverse;
Dual group generalized inverse;
Moore-Penrose dual generalized inverse;
Dual analog of least-squares solutions
\MSC[2020]
15A09\sep 65F05
\end{keyword}

\end{frontmatter}

\section{Introduction}
\numberwithin{equation}{section}
\newtheorem{theorem}{T{\scriptsize HEOREM}}[section]
\newtheorem{lemma}[theorem]{L{\scriptsize  EMMA}}
\newtheorem{corollary}[theorem]{C{\scriptsize OROLLARY}}
\newtheorem{property}[theorem]{P{\scriptsize PROPERTY}}
\newtheorem{proposition}[theorem]{P{\scriptsize ROPOSITION}}
\newtheorem{remark}{R{\scriptsize  EMARK}}[section]
\newtheorem{definition}{D{\scriptsize  EFINITION}}[section]
\newtheorem{algorithm}{A{\scriptsize  LGORITHM}}[section]
\newtheorem{example}{E{\scriptsize  XAMPLE}}[section]
\newtheorem{problem}{P{\scriptsize  ROBLEM}}[section]
\newtheorem{assumption}{A{\scriptsize  SSUMPTION}}[section]

The concept of   dual number is introduced by Clifford in 1873
 and the name of the number is given by Study in 1903 \cite{Fischer2017}.
The dual number consists of a real unit 1 and a Clifford operator $\varepsilon$.
The dual number contains two real elements, i.e. $\widehat{a}=a+ \varepsilon a{'} $,
where the real elements $a$  and $a{'}$ are called the real part and dual part of $\widehat {a} $, respectively.
The rule is $\varepsilon\neq 0$, $0\varepsilon=\varepsilon 0=0$,
$1\varepsilon=\varepsilon1=\varepsilon$ and $\varepsilon^2=0$.
When we discuss the geometry of directed lines in space,
we can take the angle ``$\theta$" as the real part and the vertical distance ``$s$"
as the dual part to form the dual angle, $\widehat{\theta}=\theta+\varepsilon s$.
As an extension of the concept of dual number,
dual vector is to replace the real elements of a real vector with dual numbers
and is often used as mathematical expressions for helices.
A matrix with dual numbers as elements is called a dual matrix.
Denote an $m\times n$ dual matrix as $\M$
 that is written as the form
\begin{align}
\label{Def-MPDGI-0}
\M= A + \varepsilon B,
\end{align}
in which
$A\in {\mathbb{R}_{m, n}}$
and  $B\in {\mathbb{R}_{m, n}}$.
The matrix
 $A\in {\mathbb{R}_{m, n}}$($B\in {\mathbb{R}_{m, n}}$)
 is called the real(dual) part of the dual matrix $\M$.
The symbols
${\mathbb{D}}_{m, n}$
denotes the set of all ${m\times n}$ dual matrices;
$I_m$ is an  $m$-order identity matrix;
$\mathcal{R}\left({\M}\right)$ represents the range of  ${\M}$.
Furthermore, denote
$\M ^\mathrm{ T }=A^\mathrm{ T } + \varepsilon B^T$.
When ${\M}^\mathrm{T}={\M}$,
we say that  ${\M}$ is symmetric.

Dual matrices are used in many fields today.
In Kinematics, for example, with the aid of the principle of transference \cite{d17g},
many problems can be initially stated under the condition of spherical motion
 and then extended to spiral motion after the dualization of the equation,
which makes the dual matrix widely used
in space agency kinematics analysis and synthesis \cite{d1g, d3g, d2g, d5g}
and robotics \cite{d10g, d8g, d7g, d6g, d11g}.
Their presence is also felt in other areas of science and engineering,
which has raised interest
in various aspects of linear algebra and computational methods associated
with their use \cite{d13g, d15g, d14g, d12g}.
The pioneering work in the engineering applications of dual algebra is
 made by Keler\cite{d19g}, Beyer\cite{d20g}, etc.
\

In \cite{d11g}, Pennestr\`{\i} and Valentini
introduce {the}
 \emph{Moore-Penrose dual generalized inverse} (MPDGI):
let  $\M= A + \varepsilon B $,
then the MPDGI of $\M$ is denoted by $\MP$ and is displayed in the form
$ \MP= A^{+} - \varepsilon A^{+} BA^{+}$.
For a given dual matrix $\M$,
if there exists a dual matrix $\widehat{X}$
 satisfying
$$ (1)\ \M\widehat{X}\M = \M ,
  \  (2) \ \widehat{X}\M\widehat{X} = \widehat{X} ,
  \  (3) \ \left( {\M\widehat{X}} \right)^\mathrm{ T } = \M\widehat{X} ,
  \  (4) \ \left( {\widehat{X}\M} \right)^\mathrm{ T } = \widehat{X}\M,$$
then we call $\widehat{X}$ {the}   \emph{dual Moore-Penrose generalized inverse} (DMPGI) of $\M$,
and
 denote it by $\MD$ \cite{d12g}.
It is worth noting that for any dual matrix, its MPDGI always exists, while its DMPGI may not exist.
Furthermore,
if $\widehat{X}$ satisfies $\M\widehat{X}\M = \M$,
we call $\widehat{X}$ a $\{1\}$-dual generalized inverse of ${\M}$,
and denote it ${\M}^{\{1\}}$;
 if $\widehat{X}$ satisfies $\M\widehat{X}\M = \M$
 and
 $\left( {\M\widehat{X}} \right)^\mathrm{ T } = \M\widehat{X}$,
we call $\widehat{X}$ a $\{1, 3\}$-dual generalized inverse of ${\M}$,
and denote it  as ${\M}^{\{1, 3\}}$.
Recently,  Wang has given some necessary and sufficient conditions for a dual matrix to have the DMPGI,
and
 some equivalent relations between DMPGI and MPDGI in \cite{d21g}.

\begin{lemma}[\cite{d21g}]
\label{lemma1}
Let ${\M}=A+\varepsilon B\in {\mathbb{D}_{m, n}}$, then the following conditions are equivalent:

{\bf (i).}  The DMPGI ${\M}^{+}$ of ${\M}$ exists;

{\bf (ii).} $\left(I_m-AA^{+}\right)  B(I_n-A^{+}A)=0$;

{\bf (iii).} $\rk  \left(\left[\begin{matrix}
       B&       A
\\     A&         0
\end{matrix}\right]\right)=2\rk (A)$.

Furthermore, when the DMPGI ${\M}^{+}$ of ${\M}$ exists,
\begin{align}
\label{The-DMPGI}
{\M}^{+}
=
A^{+}
-\varepsilon
 \left(A^{+}BA^{+}-\left(A^\mathrm{ T }A\right)^{+}B^\mathrm{ T }\left(I_m-AA^{+}\right)-\left(I_n-A^{+}A\right)B^\mathrm{ T }\left(AA^\mathrm{ T }\right)^{+}\right).
\end{align}
\end{lemma}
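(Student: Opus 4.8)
The plan is to parametrize the unknown dual inverse as $\widehat{X} = X + \varepsilon Y$ with $X, Y \in \mathbb{R}_{n,m}$ and to split each of the four Penrose equations into its real and dual parts via $\varepsilon^{2} = 0$. Since $\M\widehat{X} = AX + \varepsilon(AY + BX)$ and similarly for the other products, the real parts of (1)--(4) are precisely the four Penrose equations relating $X$ and $A$; hence the real part is forced to be the ordinary Moore--Penrose inverse, $X = A^{+}$. This collapses the whole problem to a linear system for the single real matrix $Y$, namely the dual parts
\begin{align*}
& AYA = B - AA^{+}B - BA^{+}A, \quad Y = A^{+}AY + YAA^{+} + A^{+}BA^{+}, \\
& AY + BA^{+} \text{ symmetric}, \quad A^{+}B + YA \text{ symmetric}.
\end{align*}

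First I would establish the necessity of (ii). Multiplying the first dual equation on the left by $I_m - AA^{+}$ and on the right by $I_n - A^{+}A$ annihilates its left-hand side, because $(I_m-AA^{+})A = 0 = A(I_n - A^{+}A)$, while the right-hand side collapses to $(I_m - AA^{+})B(I_n - A^{+}A)$. Thus solvability of the $Y$-system immediately forces $(I_m - AA^{+})B(I_n - A^{+}A) = 0$, which is (ii).

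For sufficiency together with the formula \eqref{The-DMPGI}, I would substitute the candidate $Y = -\left(A^{+}BA^{+} - (A^{\mathrm{T}}A)^{+}B^{\mathrm{T}}(I_m - AA^{+}) - (I_n - A^{+}A)B^{\mathrm{T}}(AA^{\mathrm{T}})^{+}\right)$ into the four dual equations and check each. The two symmetry conditions coming from (3) and (4) should fall out of standard Moore--Penrose identities alone, chiefly $A(A^{\mathrm{T}}A)^{+} = (A^{+})^{\mathrm{T}}$ and $(AA^{\mathrm{T}})^{+}A = (A^{+})^{\mathrm{T}}$ together with $A(I_n - A^{+}A) = 0$, and so they hold even without (ii). The first equation, by contrast, reduces to $AYA = -AA^{+}BA^{+}A$, and its equality with $B - AA^{+}B - BA^{+}A$ is exactly the identity $(I_m - AA^{+})B(I_n - A^{+}A) = 0$; the second dual equation is verified the same way. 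Uniqueness is then the classical Penrose argument, which uses only associativity and $(\M\widehat{X})^{\mathrm{T}} = \widehat{X}^{\mathrm{T}}\M^{\mathrm{T}}$, and therefore transfers verbatim to dual matrices.

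Finally, for (ii) $\Leftrightarrow$ (iii) I would compute the block rank by a generalized Schur-complement (Marsaglia--Styan) reduction: after the block permutation sending $\left[\begin{smallmatrix} B & A \\ A & 0\end{smallmatrix}\right]$ to $\left[\begin{smallmatrix} 0 & A \\ A & B\end{smallmatrix}\right]$ and pivoting on the zero block, the rank equals $2\,\rk(A) + \rk\left[(I_m - AA^{+})B(I_n - A^{+}A)\right]$. Hence the rank drops to $2\,\rk(A)$ exactly when the Schur term vanishes, i.e. exactly under (ii). The main obstacle I anticipate is the sufficiency verification: keeping the many generalized-inverse identities straight across all four dual equations at once, and confirming that conditions (3) and (4) are genuinely automatic while (1) and (2) are precisely what pin down (ii), is where the real bookkeeping lies.
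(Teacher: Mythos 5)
This lemma is imported from \cite{d21g} and the paper gives no proof of it, so there is nothing internal to compare your argument against; I can only assess it on its own terms, and on those terms it is essentially correct and complete in outline. Splitting the four Penrose equations into real and dual parts, forcing $X=A^{+}$, reading off necessity of (ii) by sandwiching the first dual equation between $I_m-AA^{+}$ and $I_n-A^{+}A$, verifying the candidate $Y$ from \eqref{The-DMPGI} directly, and obtaining (ii)$\Leftrightarrow$(iii) from the Marsaglia--Styan rank identity $\rk\left(\left[\begin{smallmatrix} B& A\\ A& 0\end{smallmatrix}\right]\right)=2\rk(A)+\rk\left(\left(I_m-AA^{+}\right)B\left(I_n-A^{+}A\right)\right)$ all go through; the Penrose uniqueness argument indeed transfers verbatim since it uses only ring operations and the involution. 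One small correction: your bookkeeping forecast is slightly off in that the dual part of equation (2), namely $Y=A^{+}AY+YAA^{+}+A^{+}BA^{+}$, is satisfied by the candidate $Y$ \emph{identically}, using only $A^{+}A\left(A^{\mathrm T}A\right)^{+}=\left(A^{\mathrm T}A\right)^{+}$ and $\left(AA^{\mathrm T}\right)^{+}AA^{+}=\left(AA^{\mathrm T}\right)^{+}$; it is only equation (1) that forces condition (ii), while (2), (3) and (4) are all automatic. Also, in the rank reduction one pivots on the two $A$ blocks (not on the zero block): clearing $B$ with row and column operations built from $BA^{+}$ and $A^{+}B$ leaves exactly the residual $\left(I_m-AA^{+}\right)B\left(I_n-A^{+}A\right)$. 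Neither point affects the validity of the proof.
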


 \begin{lemma}[\cite{d21g}]
\label{lemma2}
 Let ${\M}=A+\varepsilon B\in{\mathbb{D}}_{m, n}$,  then
 the  DMPGI ${\M}^{+}$ of ${\M}$ exists, and ${\M}^{+}={\M}^{\PP}$
 if and only if
$\left(I_m-AA^{+}\right)B=0$  and $B\left(I_n-A^{+}A\right)=0.$
\end{lemma}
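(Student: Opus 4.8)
The plan is to reduce the two-sided statement to a single verification: whether the candidate $\widehat{X} := {\M}^{\PP} = A^+ - \varepsilon A^+ B A^+$ satisfies the four dual Penrose equations $(1)$--$(4)$. Because the DMPGI is unique whenever it exists (the classical Penrose uniqueness argument uses only the four equations together with transposition, both of which are available for dual matrices), the clause ``${\M}^{+}$ exists and ${\M}^{+}={\M}^{\PP}$'' is equivalent to ``$\widehat{X}$ satisfies $(1)$--$(4)$''. I would begin by computing
\begin{align}
\M\widehat{X} = AA^+ + \varepsilon (I_m - AA^+)BA^+, \qquad \widehat{X}\M = A^+A + \varepsilon A^+ B(I_n - A^+A),\notag
\end{align}
which are short manipulations resting on $AA^+A=A$ and $A^+AA^+=A^+$.

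From these I expect equation $(2)$ to hold identically (its dual part collapses because $A^+AA^+=A^+$), so it imposes no constraint. Comparing dual parts in $(1)$ reduces $\M\widehat{X}\M=\M$ to $(I_m-AA^+)B(I_n-A^+A)=0$, which is precisely the existence criterion of Lemma \ref{lemma1}. Equations $(3)$ and $(4)$ say exactly that the dual parts of $\M\widehat{X}$ and of $\widehat{X}\M$ are symmetric, i.e. that $(I_m-AA^+)BA^+$ and $A^+B(I_n-A^+A)$ equal their own transposes. With this in hand the direction $(\Leftarrow)$ is immediate: if $(I_m-AA^+)B=0$ and $B(I_n-A^+A)=0$, then each of these three matrices is $0$, hence symmetric, so all four equations hold and $\widehat{X}$ is the DMPGI.

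The real work is the direction $(\Rightarrow)$, and I expect the main obstacle to be upgrading the symmetry conditions $(3)$,$(4)$ --- which a priori are weaker than vanishing --- to the genuine equalities $(I_m-AA^+)B=0$ and $B(I_n-A^+A)=0$. The device I would use is a range argument: the columns of $S:=(I_m-AA^+)BA^+$ lie in $\R(A)^{\perp}$, while those of $S^{\mathrm T}=(A^+)^{\mathrm T}B^{\mathrm T}(I_m-AA^+)$ lie in $\R(A)$; if $S=S^{\mathrm T}$ then $\R(S)=\R(S^{\mathrm T})\subseteq \R(A)\cap\R(A)^{\perp}=\{0\}$, so $S=0$, and symmetrically $(4)$ forces $A^+B(I_n-A^+A)=0$. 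Multiplying these on the right by $A$ and on the left by $A$ respectively gives $(I_m-AA^+)BA^+A=0$ and $AA^+B(I_n-A^+A)=0$. I would then finish by splitting with $I_n=A^+A+(I_n-A^+A)$ and $I_m=AA^++(I_m-AA^+)$ and invoking $(1)$ to annihilate the surviving corner term:
\begin{align}
(I_m-AA^+)B = (I_m-AA^+)BA^+A + (I_m-AA^+)B(I_n-A^+A) = 0,\notag
\end{align}
and likewise $B(I_n-A^+A)=0$. (Alternatively one could substitute the hypotheses into the explicit formula \eqref{The-DMPGI}; there the same orthogonality of $\R(A^{\mathrm T})$ and its complement is what makes the two correction terms in the dual part drop out.)
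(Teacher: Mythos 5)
The paper does not actually prove this lemma: it is imported verbatim from \cite{d21g}, so there is no in-paper argument to compare yours against. Judged on its own, your proof is correct and self-contained. The reduction to checking the four dual Penrose equations for the single candidate $\M^{\PP}=A^{+}-\varepsilon A^{+}BA^{+}$ is legitimate, since the classical Penrose uniqueness argument uses only the four equations and transposition and therefore transfers to dual matrices; your computations of $\M\M^{\PP}$ and $\M^{\PP}\M$ are right, equation $(2)$ does hold identically (the dual part of $\M^{\PP}\M\M^{\PP}$ collapses via $A^{+}AA^{+}=A^{+}$), and equation $(1)$ reduces exactly to $\left(I_m-AA^{+}\right)B\left(I_n-A^{+}A\right)=0$, matching Lemma \ref{lemma1}. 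The only delicate step in the forward direction --- upgrading the symmetry of $S=\left(I_m-AA^{+}\right)BA^{+}$ and $T=A^{+}B\left(I_n-A^{+}A\right)$ to their vanishing --- is handled correctly: $\R\left(S\right)\subseteq\R\left(A\right)^{\perp}$ while $\R\left(S^{\mathrm{T}}\right)\subseteq\R\left(\left(A^{+}\right)^{\mathrm{T}}\right)=\R\left(A\right)$, so $S=S^{\mathrm{T}}$ forces $S=0$, and the analogous argument with $\R\left(A^{\mathrm{T}}\right)$ kills $T$; the final decomposition $\left(I_m-AA^{+}\right)B=\left(I_m-AA^{+}\right)BA^{+}A+\left(I_m-AA^{+}\right)B\left(I_n-A^{+}A\right)$ together with equation $(1)$ then closes the argument, and likewise for $B\left(I_n-A^{+}A\right)$. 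Your parenthetical alternative (substituting into the explicit formula \eqref{The-DMPGI} and using the same orthogonality to separate the two correction terms) would also work and is closer in spirit to how the source paper \cite{d21g} organizes its results.
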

MPDGI and DMPGI are used in many aspects.
For example, in \cite{d19g},
Udwadia and Pennestr\`{\i} apply MPDGI  to various motions such as
 rigid body translation and dual angular velocity acquisition.
In \cite{d12g}, DMPGI is applied to kinematic synthesis of spatial mechanisms,
and a series of numerical examples about calculation and application of DMPGI
to kinematic synthesis of linkage mechanisms are given.
In addition,
MPDGI are also used in many inverse problems of kinematics and analysis of  machines and mechanisms in \cite{d1g}.

Next, Jin and Zhang\cite{Zhong2022AIMS}   introduce   the  dual group generalized inverse (DGGI):
let ${\M} $ be an $n$-square dual matrix.
If there exists  an $n$-square dual matrix $\widehat{G}$  satisfying
\begin{align*}
 \
 (1)\ {\M}\widehat{G}{\M}={\M},
  \quad
  \
  (2)\ \widehat{G}{\M}\widehat{G}=\widehat{G},
   \quad
   \
   {  (5)}\  {\M}\widehat{G}=\widehat{G}{\M},
\end{align*}
then ${\M}$ is called {  a dual group generalized invertible  matrix},
and $\widehat{G}$ is the  DGGI of ${\M}$ , which is recorded as ${\M}^\#$.
 Jin and Zhang\cite{Zhong2022AIMS}
 give some necessary and sufficient conditions for a dual matrix to have DGGI
 and
 apply DGGI to study linear dual equations.
 \begin{lemma}[\cite{Zhong2022AIMS}]
\label{core-yingli}
 Let ${\M}=A+\varepsilon B$ be a dual matrix
 with $A, B\in {\mathbb{R}_{n, n}}$
  and ${\rm Ind}\left(A\right)=1$, then
the DGGI of  ${\M}$ exists if and only if $\left(I_n-AA^{\#}\right)  B(I_n-AA^{\#})=0$.

Furthermore, if the dual group inverse of ${\M}$ exists,
then
\begin{align}
\label{qunni-gongshi}
{\M}^{\#}=A^{\#}+\varepsilon R,
\end{align}
where
\begin{align}
\label{The-group}
 R=-A^{\#}BA^{\#}+\left(A^{\#}\right)^2B\left(I_n-AA^{\#}\right)+\left(I_n-AA^{\#}\right)B\left(A^{\#}\right)^2.
\end{align}
\end{lemma}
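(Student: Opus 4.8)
The plan is to search for the DGGI in the form $\widehat{G}=G+\varepsilon H$ with $G,H\in\mathbb{R}_{n,n}$ and to split each of the three defining equations into its real and dual parts. Using $\varepsilon^{2}=0$ one computes $\M\widehat{G}=AG+\varepsilon(AH+BG)$ and $\widehat{G}\M=GA+\varepsilon(GB+HA)$, and likewise for the triple products $\M\widehat{G}\M$ and $\widehat{G}\M\widehat{G}$. Equating real parts in conditions (1), (2) and (5) gives exactly $AGA=A$, $GAG=G$ and $AG=GA$, the defining relations of the group inverse. Since $\mathrm{Ind}(A)=1$, the group inverse $A^{\#}$ exists and is unique, so necessarily $G=A^{\#}$; this already pins down the real part of $\M^{\#}$ and reduces the problem to solving the dual-part system in the single unknown $H$.

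Substituting $G=A^{\#}$, the dual parts of (1), (2) and (5) become
\begin{align*}
AA^{\#}B+AHA+BA^{\#}A&=B,\\
A^{\#}AH+A^{\#}BA^{\#}+HAA^{\#}&=H,\\
AH+BA^{\#}&=A^{\#}B+HA,
\end{align*}
a coupled Sylvester-type system for $H$. The key device will be the idempotent $P:=AA^{\#}=A^{\#}A$ together with the index-one decomposition $A=T\,\mathrm{diag}(C,0)\,T^{-1}$ with $C$ invertible, whence $A^{\#}=T\,\mathrm{diag}(C^{-1},0)\,T^{-1}$ and $P=T\,\mathrm{diag}(I,0)\,T^{-1}$. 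Writing $B=T(B_{ij})T^{-1}$ and $H=T(H_{ij})T^{-1}$ in the matching $2\times2$ block form turns each of the three equations into independent block identities.

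I would then read off the blocks. The first equation forces $CH_{1}C=-B_{1}$ in the $(1,1)$-block and, crucially, $0=B_{4}$ in the $(2,2)$-block; the second reproduces $H_{1}=-C^{-1}B_{1}C^{-1}$ and forces $H_{4}=0$ while leaving the off-diagonal blocks free; the third is automatic in the $(1,1)$- and $(2,2)$-blocks and determines $H_{2}=C^{-2}B_{2}$ and $H_{3}=B_{3}C^{-2}$. The only relation that is not a mere prescription of a block of $H$ is $B_{4}=0$, which in invariant form is precisely $(I_{n}-AA^{\#})B(I_{n}-AA^{\#})=0$; this simultaneously establishes necessity and shows that, once the condition holds, $H$ is uniquely determined, whence $\widehat{G}$ is unique. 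Reassembling $H_{1},H_{2},H_{3},H_{4}$ and rewriting them through $A^{\#}$, $(A^{\#})^{2}$ and $I_{n}-AA^{\#}$ reproduces exactly $H=R$ of \eqref{The-group}, giving sufficiency and the closed formula \eqref{qunni-gongshi}.

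The main obstacle is arranging the three coupled dual-part equations so that the solvability condition becomes visible: a direct elimination of $H$ is awkward, whereas passing to the block decomposition isolates the single $(2,2)$-block obstruction $(I_{n}-AA^{\#})B(I_{n}-AA^{\#})=0$ and renders every remaining block an explicit formula for the corresponding piece of $H$. One should also confirm the internal consistency of the system — that the value of $H_{1}$ demanded by the first and second equations agrees, and that the candidate $H$ genuinely satisfies all three equations — since this consistency, together with the identification of the obstruction, is the heart of the argument.
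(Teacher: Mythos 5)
Your argument is correct, and it is worth noting that the paper itself gives no proof of this statement: Lemma \ref{core-yingli} is quoted from Zhong and Zhang \cite{Zhong2022AIMS}, so your write-up is effectively a self-contained reconstruction rather than a variant of an in-paper argument. Your route — split into real and dual parts, identify $G=A^{\#}$ from the real parts, then block-diagonalize via the core--nilpotent similarity $A=T\,\mathrm{diag}(C,0)\,T^{-1}$ — is sound; I checked the block computations and they do yield $H_{1}=-C^{-1}B_{1}C^{-1}$, $H_{2}=C^{-2}B_{2}$, $H_{3}=B_{3}C^{-2}$, $H_{4}=0$ with the single obstruction $B_{4}=0$, which is exactly $(I_{n}-AA^{\#})B(I_{n}-AA^{\#})=0$, and reassembling gives precisely the formula \eqref{The-group}. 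Two small remarks. First, your choice of a (generally non-orthogonal) similarity $T$ is legitimate here because conditions $(1)$, $(2)$, $(5)$ involve no transposes; it is also a genuinely different decomposition from the Hartwig--Spindelb\"ock form \eqref{core-1} with unitary $U$ that this paper uses everywhere else — the similarity form is the more natural tool for group-inverse questions, while the unitary form is needed once condition $(3)$ (a transpose condition) enters, as in the DCGI sections. Second, your closing caveat about verifying sufficiency is already discharged by your own analysis: every block equation is either an identity or an explicit prescription of a block of $H$, so the system is consistent exactly when $B_{4}=0$ and the candidate $\widehat{G}$ automatically satisfies all three defining equations; you could state this outright rather than flag it as an open check. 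As a bonus, your argument also shows uniqueness of the DGGI, which the lemma does not claim but which is consistent with the paper's later uniqueness result for the DCGI.
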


 Most application of dual algebra
 in Kinematics require numerical solutions to linear dual equations.
 As showed in \cite{Udwadia2021mamt156},
  Udwadia introduces  the norm
   of   dual vector and uses some properties of dual generalized inverses in solving linear dual equations.
 Various dual generalized inverses are useful for solving
 consistent linear dual equations or
  inconsistent lineardual dual equations.

  In particular, in \cite{dQIg}, both a total order and an
absolut value function for dual numbers are put forward by Qi, Ling and Yan.
 Then they give the definition of the magnitude of a dual quaternion as a dual number. Furthermore, 1-norm, $\infty$-norm and 2-norm are extended to dual quaternion vectors in their article.

\

 It is known that Moore-Penrose inverse and group inverse belong to generalized inverses in complex fields.
Other well-known generalized inverses are Drazin inverse, core-EP inverse, and so on.
The core inverse means that when the index of  $A\in {\mathbb{R}_{n, n}}$ is 1,
there is a unique matrix $X\in {\mathbb{R}_{n, n}}$,
which satisfies $AXA =A$, $AX^{2}= X$ and $\left(AX\right)^\mathrm{ T }=AX$.
We call it the core inverse of matrix $A$, expressed by $A^{\mbox{\tiny\textcircled{\#}}}$.
 In \cite{Baksalary2010LMA681}, Baksalary and   Trenkler get $A^{\mbox{\tiny\textcircled{\#}}}=A^{\#}AA^+$.
The core inverse has good properties.
It can be used to solve many problems, especially in  constrained least squares problem.
 Although the above literatures have discussed a series of related problems of dual generalized inverses,
the dual core generalized inverse of a dual matrix has not been systematically studied,
and related problems of the dual index have not been discussed.
On the basis of the above researches, the concepts of the dual  index
and dual core generalized inverse (DCGI) are introduced in this paper.
Furthermore,
 it is proved that when the dual index of dual matrix is 1, there must be the DCGI of the dual matrix.
The sufficient and necessary condition, namely the index is 1, is used to identify the existence of DCGI,
which makes the problem more concise and clear.
At the same time, we also give other equivalent conditions for the existence of  DCGI
and the compact formula for  DCGI,
as well as the relations among  DCGI,  DGGI,  DMPGI and  MPDGI of the dual matrix,
and discuss some special dual matrices.
At last, we solve two linear dual equations by applying DCGI.

\section{ Dual Index One}
\label{Section-DualIndex}

 In complex(real)  field,
the index is necessary for studying generalized inverse  and its related problems.
For example,
it is known  that if and only if the group (core) inverse of a matrix exist, the index of the matrix is equal to one.
Moreover,
Wei et al \cite{Diao2000jcam-1,Lin2009laa1665} consider singular linear structured system with index one.
 In this section, we introduce the dual index of a dual matrix.
 When the dual index is equal to one,
  we give its some characterizations.
  Furthermore,
by using the dual index,
we  study the  dual group generalized invertible  matrix.

\begin{definition}
\label{Def-Dual-Index-1}
 Let ${\M}$   be   an $n$-square   dual   matrix.
 If $\R\left({{\M}}^{2}\right)=\R\left({\M}\right)$,
 then the dual index of ${\M}$ is equal to one,
 and it is recorded as $\Ind\left({\M}\right)=1$.
\end{definition}

Suppose that $\R\left({{\M}}^{2}\right) = {\R\left({\M}\right)}$,
where ${\M}=A+\varepsilon B$, $A$ and $B$
are $n\times n$ real matrices.
To obtain $\R\left({\widehat{A}}^{2}\right)=\R\left(\widehat{A}\right)$,
 we   need to prove $\R\left(\widehat{A}\right)\subseteq{\R\left(\widehat{A}^{2}\right)}$ and ${ \R\left(\widehat{A}^{2}\right)}\subseteq{\R\left(\widehat{A}\right)}$.
 It is   obvious that  the latter is true.
 Next, we {  need to prove} $\R\left(\widehat{A}\right)\subseteq{\R\left(\widehat{A}^{2}\right)}$,
 that is, there exists $\widehat{X}=X_1+\varepsilon X_2$,
  which makes
 \begin{align}
 \nonumber
 {\M}^{2}\widehat{X}={\M}.
 \end{align}
Put ${\M}=A+\varepsilon B$ and $\widehat{X}=X_1+\varepsilon X_2$ into the above equation to get
 \begin{align}
 \nonumber
  A+\varepsilon B
  =
  \left(A+\varepsilon B\right)^2\left({X}_1+\varepsilon{X}_2\right)
  =
  A^2{X}_1+\varepsilon\left(A^2{X}_2+\left(AB+BA\right){X}_1\right),
 \end{align}
that is,
\begin{subnumcases}{}
 \label{Sec-2-2.2-1-a}
 A^2{X}_1=A,
 \\
 \label{Sec-2-2.2-1-b}
 A^2{X}_2+\left(AB+BA\right){X}_1=B.
 \end{subnumcases}

From (\ref{Sec-2-2.2-1-a}),
we see that
 $A^2{X}_1=A$ is consistent
 if and only if
\begin{align}
 \label{Sec-2-2.2-3}
 \rk \left(A\right)=\rk \left(A^2\right),
\end{align}
 that is, the index of real matrix $A$ is equal to 1.
And we can obtain the  general solution to (\ref{Sec-2-2.2-1-a})
\begin{align}
 \label{Sec-2-2.2-2}
{X}_1=A^{\# }+\left(I_n-A^{\# }A\right)Y,
\end{align}
where $Y$ is arbitrary.

By substituting  (\ref{Sec-2-2.2-2}) into (\ref{Sec-2-2.2-1-b}),
 we get $B =A^2X_2+\left(AB+BA\right)\left(A^{\#}+\left(I_n-A^{\#}A\right)Y\right)$.
 From
$A\left(A^{\#}+\left(I_n-A^{\#}A\right)Y\right)=AA^{\#}$,
it follows that
$B
=A^2X_2+AB\left(A^{\#}+\left(I_n-A^{\#}A\right)Y\right)+{BAA^{\#}}
=A^2X_2+ABA^{\#}+AB\left(I_n-A^{\#}A\right)Y+{  BAA^{\#}} $.
Therefore,
\begin{align}
 \label{Sec-2-2.2-5}
\left[\begin{matrix}A^2&AB\left(I_n-A^{\#}A\right)\end{matrix}\right]
\left[\begin{matrix} X_2\\Y\end{matrix}\right]
=B-ABA^{\#}{  -BAA^{\#}}.
\end{align}
If the above equation is consistent, which is equivalent to
\begin{align}
 \label{Sec-2-2.2-4}
 \rk \left(\left[\begin{matrix}A^2&AB\left(I_n-A^{\#}A\right)&B-ABA^{\#}{  -BAA^{\#}}\end{matrix}\right]\right)
=\rk \left(\left[\begin{matrix} A^2&AB\left(I_n-A^{\#}A\right)\end{matrix}\right]\right).
\end{align}
Since
 \begin{align*}
\left[\begin{matrix}A^{2}&AB\left(I_n-A^{\# }A\right)\end{matrix}\right]
\left[\begin{matrix}
          I_n&        -A^{\# }B\left(I_n-A^{\# }A\right)
\\        0 &        I_n
\end{matrix}\right]
=\left[\begin{matrix}A^{2}& 0\end{matrix}\right],
\end{align*}
by applying   (\ref{Sec-2-2.2-3}) we  obtain
\begin{align*}
\nonumber\rk \left(A^{2}\right)=\rk \left(A\right)
&=
\rk \left(\left[\begin{matrix}A^{2} & AB\left(I_n-A^{\# }A\right) &  B-\left(AB+BA\right)A^{\#}\end{matrix}\right]\right).
\\
&=
\rk \left(\left[\begin{matrix}A &B-\left(AB+BA\right)A^{\#}\end{matrix}\right]\right)
\\
&=
\rk \left(\left[\begin{matrix}A&B\left(I_n-AA^{\#}\right)\end{matrix}\right]\right).
\end{align*}
Then the consistency of (\ref{Sec-2-2.2-5}) is equivalent to
\begin{align}
\label{Sec-2-2.2-6}
\rk \left(A\right)
=\rk \left(\left[\begin{matrix}A & B(I_n-AA^{\#})\end{matrix}\right]\right).
\end{align}

According to (\ref{Sec-2-2.2-3}) and (\ref{Sec-2-2.2-6}),
 the dual index of ${\M}$ is equal to 1,
which is equivalent to
\begin{align}
\label{g 5}
\rk \left(A^{2}\right)
=\rk (A)
=\rk \left(\left[\begin{matrix}A & B\left(I_n-AA^{\#}\right)\end{matrix}\right]\right).
\end{align}
Therefore,
we have the following theorem.
\begin{theorem}
\label{The-2-4}
Let ${\M}= A+\varepsilon B\in{\mathbb{D}}_{n, n}$ and $\rk \left(A\right)= r$,
then the dual index of ${\M}$ is equal to one,
which is equivalent to   ${\rm Ind}\left(A\right)=1$, and
\begin{align}
\label{laojia}
\rk \left(A\right)
=
\rk\left (\left[\begin{matrix}A&B(I_n-AA^{\#}\end{matrix}\right]\right).
\end{align}
\end{theorem}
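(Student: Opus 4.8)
The plan is to work straight from Definition \ref{Def-Dual-Index-1}, reading $\Ind(\M)=1$ as the range equality $\R(\M^{2})=\R(\M)$. Since $\M^{2}=\M\cdot\M$, the inclusion $\R(\M^{2})\subseteq\R(\M)$ is automatic, so the entire content sits in the reverse inclusion $\R(\M)\subseteq\R(\M^{2})$. Over the (commutative) ring of dual numbers this inclusion is equivalent to the solvability of the dual matrix equation $\M^{2}\widehat{X}=\M$ in the unknown $\widehat{X}=X_{1}+\varepsilon X_{2}$, because asserting that every column of $\M$ lies in the column module generated by $\M^{2}$ is exactly asserting that such an $\widehat{X}$ exists. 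So the first move is to recast the range condition as a solvability question and then separate real and dual parts.

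Next I would expand $\M^{2}=A^{2}+\varepsilon(AB+BA)$ using $\varepsilon^{2}=0$ and match the real and dual coefficients of $\M^{2}\widehat{X}=\M$, obtaining the coupled pair $A^{2}X_{1}=A$ and $A^{2}X_{2}+(AB+BA)X_{1}=B$. The first is a plain real matrix equation, so by the standard criterion $MX=N$ is solvable iff $\rk[\,M\ \ N\,]=\rk(M)$, it is consistent precisely when $\rk(A)=\rk(A^{2})$, i.e. $\Ind(A)=1$; in that case its general solution is $X_{1}=A^{\#}+(I_{n}-A^{\#}A)Y$ with $Y$ arbitrary. This already isolates the first of the two conditions appearing in the theorem.

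I would then substitute this $X_{1}$ into the second equation, use $A\bigl(A^{\#}+(I_{n}-A^{\#}A)Y\bigr)=AA^{\#}$ to collapse the $BA$-part to the constant $BAA^{\#}$, and regroup so that $(X_{2},Y)$ are the unknowns of the single system $[\,A^{2}\ \ AB(I_{n}-A^{\#}A)\,]\binom{X_{2}}{Y}=B-ABA^{\#}-BAA^{\#}$. Its consistency is once more a rank equality, and the main obstacle is reducing that equality to the clean form $\rk(A)=\rk[\,A\ \ B(I_{n}-AA^{\#})\,]$. I expect to handle this by the block column operation sending $[\,A^{2}\ \ AB(I_{n}-A^{\#}A)\,]$ to $[\,A^{2}\ \ 0\,]$ (which rests on $A^{2}A^{\#}=A$), by the facts $\R(A^{2})=\R(A)$ and $\R\bigl(AB(I_{n}-A^{\#}A)\bigr)\subseteq\R(A)$ which let me replace the first two blocks by $A$ without changing rank, and by the commutativity $AA^{\#}=A^{\#}A$, which both absorbs $ABA^{\#}$ into the column space of $A$ and rewrites $B-BAA^{\#}$ as $B(I_{n}-AA^{\#})$. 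Each ingredient is a property of the group inverse that is available exactly once $\Ind(A)=1$ is in force.

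Finally I would combine the two consistency conditions: $\Ind(\M)=1$ holds if and only if both $A^{2}X_{1}=A$ and the reduced system are consistent, that is, if and only if $\Ind(A)=1$ and $\rk(A)=\rk[\,A\ \ B(I_{n}-AA^{\#})\,]$, which is precisely the claimed equivalence \eqref{laojia}.
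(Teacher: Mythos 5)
Your proposal reproduces the paper's own derivation: the paper also reads $\Ind(\M)=1$ as solvability of $\M^{2}\widehat{X}=\M$, splits it into $A^{2}X_{1}=A$ and $A^{2}X_{2}+(AB+BA)X_{1}=B$, solves the first via the group inverse, substitutes into the second, and reduces the resulting consistency condition by the same block column operations to $\rk(A)=\rk\left(\left[\begin{matrix}A & B(I_n-AA^{\#})\end{matrix}\right]\right)$. The argument is correct and essentially identical to the one in the paper.
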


\

Next,   we  present one well-known matrix decomposition\cite[Corollary 6]{Hartwig1984LMA241}
and several corresponding decompositions of generalized inverses, which will be used
in the following part of the paper.
Let  $A\in \mathbb{R}_{n, n}$ with  $\rk \left(A\right)= r$,
then
\begin{align}
\label{core-1}
A=U\left[\begin{matrix}
     \Sigma K&         \Sigma L
\\        0&         0
\end{matrix}\right]U^\mathrm{ T }
\end{align}
where $U\in \mathbb{R}_{n, n}$ is unitary,
 nonsingular   $ \Sigma=diag(\sigma_{1}, \cdots, \sigma_{r})$  is the diagonal matrix of singular values of $A$,
$\sigma_{1}\geq \cdots \geq \sigma_{r}> 0$,
and $K\in \mathbb{R}_{r, r}$, $L\in \mathbb{R}_{r, n-r}$ satisfy
\begin{align}
\label{core-2}
KK^\mathrm{ T }+LL^\mathrm{ T }=I_r.
\end{align}

By applying the decomposition,
Baksalary and Trenkler\cite{Baksalary2010LMA681}
get
\begin{align}
\label{core-3}
A^+=U\left[\begin{matrix}
     K^\mathrm{ T }\Sigma^{-1} &         0
\\   L^\mathrm{ T }\Sigma^{-1} &         0
\end{matrix}\right]U^\mathrm{ T }.
\end{align}
Especially,
when the index of $A$ is one,
the necessary and sufficient condition for the existence of $A^{\mbox{\tiny\textcircled{\#}}}$
is that $K$ is nonsingular.
In \cite{Baksalary2010LMA681}, by applying (\ref{core-1}) Baksalary and Trenkler also  give characterizations of core inverse and group inverse:
\begin{align}
\label{core-5}
A^{\mbox{\tiny\textcircled{\#}}}
&=A^{\#}AA^+
\\
\label{core-6}
&=
U\left[\begin{matrix}
      (\Sigma K)^{-1}&     0
\\                       0&     0
\end{matrix}\right]U^\mathrm{ T },
\\
\label{core-4}
A^{\# }&=U\left[\begin{matrix}
      K^{-1}\Sigma^{-1}&     K^{-1}\Sigma^{-1}K^{-1}L
\\                    0&         0
\end{matrix}\right]U^\mathrm{ T }.
\end{align}

 Based on the premise that the index of  $A\in \mathbb{R}_{n, n}$ is 1,
 we analyze (\ref{laojia}).
Let the decomposition of $A$ be of the form in  (\ref{core-1}).
Write
\begin{align}
\label{core-7}
B=U\left[\begin{matrix}
     B_1&         B_2
\\   B_3&         B_4
\end{matrix}\right]U^\mathrm{ T },
\end{align}
 where $B_1$ is an $r$-square matrix and $r=\rk(A)$.
 By substituting  (\ref{core-1}), (\ref{core-4}) and (\ref{core-7}) to $B\left(I_n-AA^{\#}\right)$,
we get
\begin{align*}
B\left(I_n-AA^{\#}\right)=U
\left[\begin{matrix}
     0&        -B_1K^{-1}L+B_2
\\   0&        -B_3K^{-1}L+B_4
\end{matrix}\right]U^\mathrm{ T }.
\end{align*}
It follows from   (\ref{laojia})  and (\ref{core-1}) that
\begin{align*}
\rk\left(
\left[\begin{matrix}
     \Sigma K&   \Sigma L
\\          0&        0
\end{matrix}\right]\right)=\rk\left(
\left[\begin{matrix}
      \Sigma K&        \Sigma L&     0&       -B_1K^{-1}L+B_2
\\           0&               0&     0&       -B_3K^{-1}L+B_4
\end{matrix}\right]\right),
\end{align*}
 which
implies that
$B_4=B_3K^{-1}L$.

In summary,
we get that
$\rk (A)=\rk\left(\left [\begin{matrix}A &  B(I_n-AA^{\#})\end{matrix}\right]\right)$
  if and only if
$B_4=B_3K^{-1}L$.
Therefore, we have the following Theorem \ref{The-2-40}.
\begin{theorem}
\label{The-2-40}
Let  ${\M}= A+\varepsilon B\in{\mathbb{D}}_{n, n}$, $\rk \left(A\right)= r$,
 $A$ and $B$ have the forms as  in $(\ref{core-1})$ and  $(\ref{core-7})$,
respectively.
Then
the dual index of ${\M}$ is equal to one, which is equivalent to
   ${\rm Ind}\left(A\right)=1$
 and $B_4=B_3K^{-1}L$.
\end{theorem}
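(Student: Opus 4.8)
The plan is to deduce the statement directly from Theorem \ref{The-2-4}, which already reduces the condition $\Ind(\M)=1$ to the conjunction of $\Ind(A)=1$ together with the rank equality (\ref{laojia}). Since the clause $\Ind(A)=1$ appears verbatim in both characterizations, the entire task is to show that, under the standing assumption $\Ind(A)=1$, the rank condition $\rk(A)=\rk\left(\left[\begin{matrix}A & B(I_n-AA^{\#})\end{matrix}\right]\right)$ is equivalent to the single block identity $B_4=B_3K^{-1}L$. This is a matrix-rank computation performed in the coordinates supplied by the decompositions (\ref{core-1}) and (\ref{core-7}).

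First I would record that $\Ind(A)=1$ is equivalent to $\rk(A^2)=\rk(A)=r$, and that in the present coordinates this forces $K$ to be nonsingular, so that $A^{\#}$ admits the closed form (\ref{core-4}). Multiplying (\ref{core-1}) by (\ref{core-4}) yields $AA^{\#}=U\left[\begin{smallmatrix}I_r & K^{-1}L\\ 0 & 0\end{smallmatrix}\right]U^\mathrm{T}$, hence $I_n-AA^{\#}=U\left[\begin{smallmatrix}0 & -K^{-1}L\\ 0 & I_{n-r}\end{smallmatrix}\right]U^\mathrm{T}$. Substituting this together with (\ref{core-7}) reproduces the expression for $B(I_n-AA^{\#})$ already displayed in the text, whose only possibly nonzero columns are $-B_1K^{-1}L+B_2$ (top block) and $-B_3K^{-1}L+B_4$ (bottom block).

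Next I would evaluate $\rk\left(\left[\begin{matrix}A & B(I_n-AA^{\#})\end{matrix}\right]\right)$. Because $U$ is unitary, this rank equals that of the inner block matrix $\left[\begin{smallmatrix}\Sigma K & \Sigma L & 0 & -B_1K^{-1}L+B_2\\ 0 & 0 & 0 & -B_3K^{-1}L+B_4\end{smallmatrix}\right]$. Here the top block $\left[\begin{matrix}\Sigma K & \Sigma L\end{matrix}\right]$ has full row rank $r$: indeed $\Sigma$ is nonsingular and the identity $KK^\mathrm{T}+LL^\mathrm{T}=I_r$ from (\ref{core-2}) forces $\left[\begin{matrix}K & L\end{matrix}\right]$ to have rank $r$. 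Using the invertibility of $\Sigma K$, column operations clear the upper right block $-B_1K^{-1}L+B_2$ without disturbing the bottom rows, precisely because the bottom-left blocks of both $A$ and $B(I_n-AA^{\#})$ vanish in these coordinates. After this reduction the matrix is block triangular, so its rank is exactly $r+\rk\left(B_4-B_3K^{-1}L\right)$.

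Comparing with $\rk(A)=r$, the equality (\ref{laojia}) holds precisely when $\rk\left(B_4-B_3K^{-1}L\right)=0$, that is, when $B_4=B_3K^{-1}L$; combining this with Theorem \ref{The-2-4} gives the asserted equivalence. I expect the only delicate point to be the bookkeeping in the rank reduction, namely verifying that the elimination of the upper right block genuinely leaves the lower right block $B_4-B_3K^{-1}L$ untouched; this rests entirely on the vanishing of the lower-left blocks noted above, after which everything reduces to routine block multiplication.
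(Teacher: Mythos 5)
Your proposal is correct and follows essentially the same route as the paper: both start from Theorem \ref{The-2-4}, compute $B\left(I_n-AA^{\#}\right)$ in the coordinates of $(\ref{core-1})$ and $(\ref{core-7})$, and reduce the rank equality $(\ref{laojia})$ to $\rk\left(B_4-B_3K^{-1}L\right)=0$. Your write-up is in fact slightly more careful than the paper's, which asserts the implication from the displayed rank identity to $B_4=B_3K^{-1}L$ without spelling out the column-elimination argument that you justify via the nonsingularity of $\Sigma K$ and the vanishing bottom-left blocks.
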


Furthermore,
since ${\rm Ind}\left(A\right)=1$,
applying (\ref{core-1}) and (\ref{core-7}),
 it is easy to check that
\begin{align*}
\rk \left(\left[\begin{matrix}
       B&       A
\\     A&       0
\end{matrix}\right]\right)&=
\rk \left(\left[\begin{matrix}
           B_1&             B_2&      \Sigma K&    \Sigma L
\\         B_3&             B_4&             0&       0
\\    \Sigma K&        \Sigma L&             0&       0
\\            0&              0&             0&       0
\end{matrix}\right]\right)
\nonumber
\\
&=2\rk \left(A\right) + \rk \left(B_4-B_3K^{-1}L\right).
\end{align*}
By applying Theorem \ref{The-2-40}, we have the following Theorem \ref{dingli1}.

\begin{theorem}
\label{dingli1}
Let ${\M}=A+\varepsilon B\in{\mathbb{D}}_{n, n}$ and $\rk\left(A\right)=r$,
then the dual index of ${\M}$ is equal to one,
which
 is equivalent to ${\rm Ind}\left(A\right)=1$,
   and
\begin{align*}
\rk \left(\left[\begin{matrix}
       B&       A
\\     A&       0
\end{matrix}\right]\right)=2\rk (A).
\end{align*}
\end{theorem}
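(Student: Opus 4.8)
The plan is to obtain the statement as a direct consequence of the canonical-form characterization in Theorem~\ref{The-2-40}. That theorem already tells us that the dual index of $\M$ equals one precisely when $\Ind(A)=1$ together with the block identity $B_4=B_3K^{-1}L$, where the $B_i$ are the blocks of $B$ relative to the decompositions $(\ref{core-1})$ and $(\ref{core-7})$. Since $\Ind(A)=1$ also appears explicitly in the statement of the present theorem, I would carry that hypothesis along throughout and reduce everything to proving that, under $\Ind(A)=1$, the rank equation
\begin{align*}
\rk\left(\left[\begin{matrix} B & A \\ A & 0\end{matrix}\right]\right)=2\rk(A)
\end{align*}
is equivalent to $B_4=B_3K^{-1}L$.

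The heart of the argument is therefore a single rank computation for the bordered matrix. First I would conjugate by the orthogonal matrix $\mathrm{diag}(U,U)$, which leaves rank invariant and replaces $A$ and $B$ by their canonical blocks; this turns $\left[\begin{matrix} B & A \\ A & 0\end{matrix}\right]$ into the four-by-four block array displayed in the text just before the theorem. Because $\Ind(A)=1$ forces $K$, and hence $\Sigma K$, to be nonsingular, the two copies of $\Sigma K$ sitting in the off-diagonal positions serve as full-rank pivots. Using the row block $[\,\Sigma K\ \ \Sigma L\ \ 0\ \ 0\,]$ to annihilate the first block-column in the rows carrying $B_1$ and $B_3$, and then using the two $\Sigma K$ pivots to clear the surviving $\Sigma L$ entries and the transformed $(1,2)$ block $B_2-B_1K^{-1}L$ by column operations, I would reduce the array to a block-diagonal form whose only nonzero blocks are the two pivots $\Sigma K$ and the Schur complement $B_4-B_3K^{-1}L$. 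Reading off the rank then gives
\begin{align*}
\rk\left(\left[\begin{matrix} B & A \\ A & 0\end{matrix}\right]\right)=2\rk(A)+\rk\left(B_4-B_3K^{-1}L\right),
\end{align*}
which is exactly the identity recorded in the excerpt.

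With this identity in hand the conclusion is immediate: the right-hand side equals $2\rk(A)$ if and only if $\rk\left(B_4-B_3K^{-1}L\right)=0$, that is, $B_4=B_3K^{-1}L$. Combining this with Theorem~\ref{The-2-40} yields the asserted equivalence. The main obstacle is purely the bookkeeping in the block reduction: one must perform the eliminations in an order that does not reintroduce nonzero entries into already-cleared blocks, so that precisely the Schur complement $B_4-B_3K^{-1}L$, and nothing else, survives alongside the two full-rank pivots. Tracking which row and column operations touch which blocks is the only place where genuine care is required; everything else follows formally from Theorem~\ref{The-2-40}.
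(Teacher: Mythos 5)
Your proposal is correct and follows essentially the same route as the paper: both reduce the bordered matrix via the Hartwig--Spindelb\"ock blocks to obtain $\rk\left(\left[\begin{smallmatrix} B & A \\ A & 0\end{smallmatrix}\right]\right)=2\rk(A)+\rk\left(B_4-B_3K^{-1}L\right)$ and then invoke Theorem~\ref{The-2-40}. The only difference is that you spell out the block elimination that the paper dismisses as ``easy to check.''
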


 In the following theorems, we give some equivalent characterizations with dual index one.
\begin{theorem}
\label{zhibiaoMP}
Let ${\M}=A+\varepsilon B\in{\mathbb{D}}_{n, n}$,
where $\rk(A)=r$,
then the dual index of ${\M}$ is one
if and only if
${\rm Ind}\left(A\right)=1$  and $\left(I_n-AA^{+}\right)B\left(I_n-A^{+}A\right)=0$.
\end{theorem}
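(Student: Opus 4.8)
The plan is to reduce the statement to Theorem \ref{The-2-40}, which already characterizes dual index one (under $\Ind(A)=1$) by the single block condition $B_4=B_3K^{-1}L$. So it suffices to show that, once $\Ind(A)=1$ holds (equivalently $K$ is nonsingular in the decomposition $(\ref{core-1})$), the matrix identity $\left(I_n-AA^{+}\right)B\left(I_n-A^{+}A\right)=0$ is equivalent to $B_4=B_3K^{-1}L$. I would work entirely in the coordinates supplied by $(\ref{core-1})$, $(\ref{core-3})$ and $(\ref{core-7})$, since the orthogonal factor $U$ cancels from both sides of every equation.

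First I would compute the two orthogonal projectors attached to $A$. Using $(\ref{core-1})$, $(\ref{core-3})$ and the normalization $(\ref{core-2})$, a direct multiplication gives
\[
AA^{+}=U\begin{bmatrix} I_r & 0\\ 0 & 0\end{bmatrix}U^\mathrm{T},\qquad A^{+}A=UPU^\mathrm{T},\quad P:=\begin{bmatrix} K^\mathrm{T}\\ L^\mathrm{T}\end{bmatrix}\begin{bmatrix} K & L\end{bmatrix}.
\]
The point is that $(\ref{core-2})$ says $\begin{bmatrix}K & L\end{bmatrix}$ has orthonormal rows, so $P$ is the orthogonal projector onto $\R\left(\begin{bmatrix}K^\mathrm{T}\\ L^\mathrm{T}\end{bmatrix}\right)$ and $I_n-A^{+}A=U(I_n-P)U^\mathrm{T}$. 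Substituting $(\ref{core-7})$ then yields $\left(I_n-AA^{+}\right)B=U\begin{bmatrix}0&0\\ B_3 & B_4\end{bmatrix}U^\mathrm{T}$, and hence
\[
\left(I_n-AA^{+}\right)B\left(I_n-A^{+}A\right)=U\begin{bmatrix}0&0\\ B_3 & B_4\end{bmatrix}(I_n-P)U^\mathrm{T}.
\]
Because $U$ is orthogonal and the top block already vanishes, this product is zero if and only if $\begin{bmatrix}B_3 & B_4\end{bmatrix}(I_n-P)=0$, i.e. if and only if $\left(B_3K^\mathrm{T}+B_4L^\mathrm{T}\right)\begin{bmatrix}K & L\end{bmatrix}=\begin{bmatrix}B_3 & B_4\end{bmatrix}$.

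The remaining step, and the only place requiring care, is to show this last identity is equivalent to $B_4=B_3K^{-1}L$. The forward direction is immediate: equating the first column block gives $\left(B_3K^\mathrm{T}+B_4L^\mathrm{T}\right)K=B_3$, and since $\Ind(A)=1$ forces $K$ to be invertible, we may solve for $C:=B_3K^\mathrm{T}+B_4L^\mathrm{T}=B_3K^{-1}$, whence the second block $CL=B_4$ reads $B_4=B_3K^{-1}L$. For the converse I would substitute $B_4=B_3K^{-1}L$ into $C$ and check both blocks separately; each collapses after pulling out a factor $K^{-1}$ and invoking $KK^\mathrm{T}+LL^\mathrm{T}=I_r$ from $(\ref{core-2})$, for instance $CK=B_3K^{-1}\left(KK^\mathrm{T}+LL^\mathrm{T}\right)K=B_3$ and similarly $CL=B_4$. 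This closes the equivalence, and combining it with Theorem \ref{The-2-40} gives the claim. I expect this algebraic verification --- keeping straight whether $K^\mathrm{T}K$ or $KK^\mathrm{T}$ appears and inserting the normalization $(\ref{core-2})$ in the correct slot --- to be the main obstacle, everything else being bookkeeping in the $U$-coordinates.
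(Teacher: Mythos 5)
Your proposal is correct and follows essentially the same route as the paper: both reduce the statement to Theorem \ref{The-2-40} via the decomposition $(\ref{core-1})$ and show that, once $K$ is nonsingular, the vanishing of $\left(I_n-AA^{+}\right)B\left(I_n-A^{+}A\right)$ is equivalent to $B_4=B_3K^{-1}L$; your factorization of $A^{+}A$ as the projector $\bigl[\begin{smallmatrix}K^\mathrm{T}\\ L^\mathrm{T}\end{smallmatrix}\bigr]\bigl[\begin{smallmatrix}K & L\end{smallmatrix}\bigr]$ merely streamlines the block algebra that the paper carries out entry by entry. (Incidentally, your statement that $\Ind(A)=1$ forces $K$ to be nonsingular is the correct one; the paper's proof contains a typo saying ``singular'' at that point.)
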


\begin{proof}
  By applying  equations (\ref{core-1}), (\ref{core-3}) and (\ref{core-7}),
we get
\begin{align*}
 I_n-AA^{+} &=
U\left[\begin{matrix}
     0&         0
\\   0&         I_{n-r}
\end{matrix}\right]U^\mathrm{ T },
\\
 I_n-A^{+}A &=
U\left[\begin{matrix}
     I_r-K^\mathrm{ T }K&                   -K^\mathrm{ T }L
\\    -L^\mathrm{ T }K&         I_{n-r}-L^\mathrm{ T }L
\end{matrix}\right]U^\mathrm{ T }.
\end{align*}
 Then
\begin{align}
\nonumber
\left(I_n-AA^{+}\right)&B\left(I_n-A^{+}A\right)
\\
\label{xuyaoyong}
&=U\left[\begin{matrix}
                                             0&                      0
\\   B_3-B_3K^\mathrm{ T }K-B_4L^\mathrm{ T }K&          -B_3K^\mathrm{ T }L+B_4-B_4L^\mathrm{ T }L
\end{matrix}\right]U^\mathrm{ T }.
\end{align}

Let the dual index of ${\M}$ be  equal to one.
According to Theorem \ref{The-2-40}, the index of $A$ is one and $B_4=B_3K^{-1}L$.
Substituting   $B_4=B_3K^{-1}L$
 into $B_3-B_3K^\mathrm{ T }K-B_4L^\mathrm{ T }K$
 and
 $ -B_3K^\mathrm{ T }L+B_4-B_4L^\mathrm{ T }L$,
applying (\ref{core-2}) we get
\begin{align*}
B_3-B_3K^\mathrm{ T }K-B_4L^\mathrm{ T }K
&=B_3-B_3K^\mathrm{ T }K-B_3K^{-1}LL^\mathrm{ T }K
\\
&=B_3-B_3K^\mathrm{ T }K-B_3K^{-1}K+B_3K^{-1}KK^\mathrm{ T }K
\\
&=B_3-B_3K^\mathrm{ T }K-B_3+B_3K^\mathrm{ T }K=0
\end{align*}
and
\begin{align*}
-B_3K^\mathrm{ T }L+B_4-B_4L^\mathrm{ T }L
&=-B_3K^\mathrm{ T }L+B_3K^{-1}L-B_3K^{-1}LL^\mathrm{ T }L
\\
&=-B_3K^\mathrm{ T }L+B_3K^{-1}L-B_3K^{-1}L+B_3K^{-1}KK^\mathrm{ T }L
\\
&=-B_3K^\mathrm{ T }L+B_3K^{-1}L-B_3K^{-1}L+B_3K^\mathrm{ T }L=0.
\end{align*}
Therefore,
from (\ref{xuyaoyong}),
it follows that  $\left(I_n-AA^{+}\right)B\left(I_n-A^{+}A\right)=0$.

\

Conversely,
let the index of $A$ is one and $\left(I_n-AA^{+}\right)B\left(I_n-A^{+}A\right)=0$.
Applying (\ref{xuyaoyong}) gives
\begin{subnumcases}{}
 \nonumber
B_3-B_3K^\mathrm{ T }K-B_4L^\mathrm{ T }K=0,
 \\
 \nonumber
-B_3K^\mathrm{ T }L+B_4-B_4L^\mathrm{ T }L=0,
 \end{subnumcases}
that is,
\begin{subnumcases}{}
 \label{0-1}
B_3-B_3K^\mathrm{ T }K=B_4L^\mathrm{ T }K,
 \\
 \label{0-2}
B_3K^\mathrm{ T }L=B_4-B_4L^\mathrm{ T }L.
 \end{subnumcases}

 Since the index of $A$ is 1, it is known that $K$ is a singular matrix.
 Post-multiplying both sides of (\ref{0-1}) by $K^{-1}L$,
  we get
\begin{align}
\label{huajianxiaogongshi}
B_3K^{-1}L-B_3K^\mathrm{ T }L=B_4L^\mathrm{ T }L.
\end{align}
  By substituting equation (\ref{0-2}) into (\ref{huajianxiaogongshi}),
  we get
$B_3K^{-1}L-B_4+B_4L^\mathrm{ T }L=B_4L^\mathrm{ T }L$,
 that is, $B_4=B_3K^{-1}L$.
In summary, the index of $A$ is one and $B_4=B_3K^{-1}L$.
It follows from Theorem \ref{The-2-40} that the dual index of ${\M}$ is one.
\end{proof}

\begin{theorem}
\label{zhibiaoqunni}
Let ${\M}=A+\varepsilon B\in{\mathbb{D}}_{n, n}$,
where $\rk(A)=r$,
then the dual index of ${\M}$ is one
if and only if
${\rm Ind}\left(A\right)=1$
and
$\left(I-AA^{\#}\right)B\left(I-AA^{\#}\right)=0$.
\end{theorem}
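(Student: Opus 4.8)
The plan is to follow the same route as in the proof of Theorem~\ref{zhibiaoMP}, only replacing the Moore--Penrose inverse $A^{+}$ by the group inverse $A^{\#}$, and to reduce the vanishing condition $\left(I_n-AA^{\#}\right)B\left(I_n-AA^{\#}\right)=0$ to the single block identity $B_4=B_3K^{-1}L$ appearing in Theorem~\ref{The-2-40}. Since either side of the claimed equivalence already forces $\Ind(A)=1$, throughout I may work with the decompositions $(\ref{core-1})$ for $A$, $(\ref{core-4})$ for $A^{\#}$, and $(\ref{core-7})$ for $B$, with $K$ nonsingular.

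First I would express $I_n-AA^{\#}$ in the coordinates supplied by $U$. Multiplying $(\ref{core-1})$ by $(\ref{core-4})$ produces a projector whose only nontrivial blocks are an identity and the off-diagonal term $-K^{-1}L$; concretely $I_n-AA^{\#}=U\left[\begin{smallmatrix} 0 & -K^{-1}L\\ 0 & I_{n-r}\end{smallmatrix}\right]U^\mathrm{T}$. Inserting this together with $(\ref{core-7})$, the product $B\left(I_n-AA^{\#}\right)$ is exactly the matrix already computed earlier in this section: its first block-column vanishes and its second block-column is $\left[\begin{smallmatrix} -B_1K^{-1}L+B_2\\ -B_3K^{-1}L+B_4\end{smallmatrix}\right]$. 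Left-multiplying once more by $I_n-AA^{\#}$ then leaves only a top-right and a bottom-right block: the bottom-right block equals $B_4-B_3K^{-1}L$, and the top-right block equals $-K^{-1}L\left(B_4-B_3K^{-1}L\right)$.

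The decisive observation, and the only point requiring a little care, is that both surviving blocks are governed by the single quantity $B_4-B_3K^{-1}L$, the top-right block being just $-K^{-1}L$ times the bottom-right one. Hence $\left(I_n-AA^{\#}\right)B\left(I_n-AA^{\#}\right)=0$ holds if and only if $B_4=B_3K^{-1}L$, with no auxiliary appeal to $(\ref{core-2})$ required. This is a genuine simplification over the Moore--Penrose case, where the two blocks had to be disentangled using $KK^\mathrm{T}+LL^\mathrm{T}=I_r$. With the equivalence $\left(I_n-AA^{\#}\right)B\left(I_n-AA^{\#}\right)=0\Leftrightarrow B_4=B_3K^{-1}L$ established, Theorem~\ref{The-2-40} at once yields that the dual index of $\M$ is one precisely when $\Ind(A)=1$ and $B_4=B_3K^{-1}L$, which is the assertion. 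I expect the main, and only modest, obstacle to be carrying out the two block multiplications correctly and recognizing the proportionality of the two nonzero blocks; no deeper difficulty should arise.
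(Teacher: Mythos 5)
Your proposal is correct and follows essentially the same route as the paper: both compute $\left(I_n-AA^{\#}\right)B\left(I_n-AA^{\#}\right)$ in the coordinates of the decomposition $(\ref{core-1})$, obtain exactly the two nonzero blocks $-K^{-1}L\left(B_4-B_3K^{-1}L\right)$ and $B_4-B_3K^{-1}L$ as in $(\ref{yong})$, and reduce to the condition $B_4=B_3K^{-1}L$ of Theorem~\ref{The-2-40}. Your observation that no appeal to $(\ref{core-2})$ is needed here, unlike in the Moore--Penrose case of Theorem~\ref{zhibiaoMP}, is accurate and is implicit in the paper's argument as well.
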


\begin{proof}
Let  ${\rm Ind}\left(A\right)=1$.
By applying equations (\ref{core-1}), (\ref{core-4}) and (\ref{core-7}),
we can get
\begin{align}
\nonumber
\left(I_n-AA^{\#}\right) &=
U\left[\begin{matrix}
     0&         -K^{-1}L
\\   0&         I_{n-r}
\end{matrix}\right]U^\mathrm{ T },
\\
\label{yong}
\left(I_n-AA^{\#}\right)B\left(I_n-AA^{\#}\right) & =
U\left[\begin{matrix}
     0&         K^{-1}LB_3K^{-1}L-K^{-1}LB_4
\\   0&                      -B_3K^{-1}L+B_4
\end{matrix}\right]U^\mathrm{ T }.
\end{align}

If the dual index of ${\M}$ is one,
from Theorem \ref{The-2-40},
 we can get the index of $A$ is one and
$B_4=B_3K^{-1}L$.
 Therefore,
$K^{-1}LB_3K^{-1}L-K^{-1}LB_4=0$ and $ -B_3K^{-1}L+B_4=0$.
It follows from (\ref{yong}) that
 $\left(I-AA^{\#}\right)B\left(I-AA^{\#}\right)=0$.

Conversely,
let the index of $A$ is one and $\left(I-AA^{\#}\right)B\left(I-AA^{\#}\right)=0$.
Applying  (\ref{yong}) gives $B_4=B_3K^{-1}L$.
To sum up, the index of $A$ is one and $B_4=B_3K^{-1}L$.
 Furthermore, according to Theorem \ref{The-2-40},
 we get that the dual index of ${\M}$ is one.
\end{proof}

 By applying Lemma \ref{core-yingli} and  Theorem \ref{zhibiaoqunni},
  we get the following Theorem \ref{zhibiaoqunni-xin} that discusses the relationship between DGGI and dual index one.

\begin{theorem}
\label{zhibiaoqunni-xin}
Let ${\M}=A+\varepsilon B\in  {\mathbb{D}}_{n, n}$,
then the dual index of ${\M}$ is one
if and only if
${\M}^{\#}$ exists.
\end{theorem}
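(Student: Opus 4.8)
The plan is to reduce both directions to the common algebraic condition $\left(I_n-AA^{\#}\right)B\left(I_n-AA^{\#}\right)=0$, which already appears in Theorem \ref{zhibiaoqunni} (as a characterization of dual index one) and in Lemma \ref{core-yingli} (as a characterization of the existence of the DGGI). The only discrepancy between those two statements is that Lemma \ref{core-yingli} carries the standing hypothesis ${\rm Ind}\left(A\right)=1$, so the real work is to confirm that this hypothesis is automatically available on both sides.

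First I would treat the forward implication. Assuming the dual index of ${\M}$ is one, Theorem \ref{zhibiaoqunni} immediately supplies ${\rm Ind}\left(A\right)=1$ together with $\left(I_n-AA^{\#}\right)B\left(I_n-AA^{\#}\right)=0$. With ${\rm Ind}\left(A\right)=1$ in hand, Lemma \ref{core-yingli} applies directly, and its criterion is met, so the DGGI ${\M}^{\#}$ exists.

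The converse is where the one subtlety lies. Suppose ${\M}^{\#}$ exists and write $\widehat{G}=G+\varepsilon H$. Expanding the defining relations ${\M}\widehat{G}{\M}={\M}$, $\widehat{G}{\M}\widehat{G}=\widehat{G}$, and ${\M}\widehat{G}=\widehat{G}{\M}$, and comparing real parts, gives $AGA=A$, $GAG=G$, and $AG=GA$; that is, $G$ is the group inverse of $A$, whence ${\rm Ind}\left(A\right)=1$. Now that the index-one hypothesis on the real part is secured, I can invoke the ``only if'' part of Lemma \ref{core-yingli} to conclude $\left(I_n-AA^{\#}\right)B\left(I_n-AA^{\#}\right)=0$. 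Feeding ${\rm Ind}\left(A\right)=1$ and this equation back into Theorem \ref{zhibiaoqunni} shows that the dual index of ${\M}$ is one.

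The main obstacle I anticipate is exactly this extraction of ${\rm Ind}\left(A\right)=1$ from the mere existence of ${\M}^{\#}$: Lemma \ref{core-yingli} cannot be used to deduce the matrix equation until the index-one condition on $A$ has been verified independently, and the natural way to obtain it is to peel off the real parts of the three dual equations defining $\widehat{G}$. Once that is done, everything else is a direct chaining of Theorem \ref{zhibiaoqunni} and Lemma \ref{core-yingli}.
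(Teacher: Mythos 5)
Your proposal is correct and follows essentially the same route as the paper, namely chaining Theorem \ref{zhibiaoqunni} with Lemma \ref{core-yingli} through the common condition $\left(I_n-AA^{\#}\right)B\left(I_n-AA^{\#}\right)=0$. You are in fact slightly more careful than the paper in the converse direction: the paper silently folds the hypothesis ${\rm Ind}\left(A\right)=1$ of Lemma \ref{core-yingli} into the equivalence, whereas you justify it explicitly by comparing real parts of the defining relations of ${\M}^{\#}$ to see that the real part of ${\M}^{\#}$ is the group inverse of $A$.
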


\begin{proof}
From  Lemma \ref{core-yingli},
we see that  DGGI of  ${\M}$ exists if and only if
${\rm Ind}\left(A\right)=1$
and
$\left(I_n-AA^{\#}\right)  B(I_n-AA^{\#})=0$.
From  Theorem \ref{zhibiaoqunni},
we see that
the dual index of ${\M}$ is one
if and only if
${\rm Ind}\left(A\right)=1$
and
$\left(I-AA^{\#}\right)B\left(I-AA^{\#}\right)=0$.
 Therefore,
 we get that   the dual index of ${\M}$ is one
if and only if
${\M}^{\#}$ exists.
\end{proof}

By applying Theorem \ref{zhibiaoMP},
we see that
 the dual index of ${\M}$ is one
if and only if
${\rm Ind}\left(A\right)=1$  and $\left(I_n-AA^{+}\right)B\left(I_n-A^{+}A\right)=0$.
By applying Lemma \ref{lemma1},
we see that
the DMPGI ${\M}^{+}$ of ${\M}$ exists
if and only if
 $\left(I_m-AA^{+}\right)  B(I_n-A^{+}A)=0$.
 Therefore, we get 
 the relationship between dual index one and DMPGI
  in the following Theorem \ref{zhibiaoMPni}.

\begin{theorem}
\label{zhibiaoMPni}
Let ${\M}=A+\varepsilon B\in  {\mathbb{D}}_{n, n}$,
 then the dual index of ${\M}$ is one if and only if
  ${\rm Ind}\left(A\right)=1$ and ${\M}^{+}$ exists.
\end{theorem}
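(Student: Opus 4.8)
The plan is to derive this statement as an immediate consequence of two results already established, Theorem \ref{zhibiaoMP} and Lemma \ref{lemma1}, by observing that both are governed by the single matrix condition $\left(I_n-AA^{+}\right)B\left(I_n-A^{+}A\right)=0$. No new decomposition or rank computation is needed, so the proof amounts to a substitution of equivalences.

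First I would recall from Theorem \ref{zhibiaoMP} that, writing ${\M}=A+\varepsilon B\in{\mathbb{D}}_{n, n}$, the dual index of ${\M}$ is one if and only if ${\rm Ind}\left(A\right)=1$ and $\left(I_n-AA^{+}\right)B\left(I_n-A^{+}A\right)=0$. Next I would apply the equivalence of conditions (i) and (ii) in Lemma \ref{lemma1}, specialized to the square case $m=n$, which states precisely that the DMPGI ${\M}^{+}$ of ${\M}$ exists if and only if $\left(I_n-AA^{+}\right)B\left(I_n-A^{+}A\right)=0$. Replacing the projection condition in the first equivalence by the phrase ``${\M}^{+}$ exists'' via the second then yields that the dual index of ${\M}$ is one if and only if ${\rm Ind}\left(A\right)=1$ and ${\M}^{+}$ exists, which is exactly the claim.

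There is no genuine obstacle in this argument, since the theorem is a clean corollary of the two cited results. The only point deserving attention is bookkeeping: Theorem \ref{zhibiaoMP} couples the index hypothesis ${\rm Ind}\left(A\right)=1$ with the projection condition, whereas Lemma \ref{lemma1} characterizes the existence of ${\M}^{+}$ by the projection condition alone. Keeping these two requirements distinct is precisely what produces the conjunction ``${\rm Ind}\left(A\right)=1$ and ${\M}^{+}$ exists'' in the final statement, and one should confirm that ${\rm Ind}(A)=1$ is an independent condition that is not already subsumed by the existence of the dual Moore-Penrose generalized inverse. With that clarification in place, the substitution of the two equivalences completes the proof.
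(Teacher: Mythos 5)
Your proposal is correct and follows exactly the paper's own route: the paper also obtains Theorem \ref{zhibiaoMPni} by combining Theorem \ref{zhibiaoMP} with the equivalence of conditions (i) and (ii) in Lemma \ref{lemma1}, both hinging on the single condition $\left(I_n-AA^{+}\right)B\left(I_n-A^{+}A\right)=0$. Your remark that ${\rm Ind}(A)=1$ must be retained as a separate hypothesis is also consistent with how the paper states the result.
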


\begin{theorem}
\label{tianjiatui}
Let ${\M}=A+\varepsilon B\in  {\mathbb{D}}_{n, n}$
and ${\rm Ind}\left(A\right)=1$ , then ${\M}^{+}$ exists if and only if
$\rk \left(A\right)
=\rk\left (\left[\begin{matrix}A&B\left(I_n-AA^{\#}\right)\end{matrix}\right]\right)$.
\end{theorem}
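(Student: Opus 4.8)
The plan is to derive this statement as an immediate logical consequence of the two equivalences already established in Theorem \ref{The-2-4} and Theorem \ref{zhibiaoMPni}, rather than returning to the matrix decomposition \eqref{core-1}--\eqref{core-7}. The crucial observation that makes this possible is that the rank equality $\rk\left(A\right)=\rk\left(\left[\begin{matrix}A&B\left(I_n-AA^{\#}\right)\end{matrix}\right]\right)$ appearing in the present statement is \emph{verbatim} the rank condition \eqref{laojia} that drives Theorem \ref{The-2-4}. So the whole argument will be a splice of prior results, carried out under the standing hypothesis $\Ind\left(A\right)=1$.

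First I would invoke Theorem \ref{zhibiaoMPni}, which asserts that the dual index of $\M$ is one if and only if $\Ind\left(A\right)=1$ and $\M^{+}$ exists. Under our standing hypothesis $\Ind\left(A\right)=1$, the first conjunct is automatic, so this equivalence collapses to the statement: the dual index of $\M$ is one if and only if $\M^{+}$ exists. Next I would invoke Theorem \ref{The-2-4}, which characterizes dual index one by the two conditions $\Ind\left(A\right)=1$ together with the rank equality \eqref{laojia}. Again using $\Ind\left(A\right)=1$, this reduces to: the dual index of $\M$ is one if and only if the rank condition $\rk\left(A\right)=\rk\left(\left[\begin{matrix}A&B\left(I_n-AA^{\#}\right)\end{matrix}\right]\right)$ holds.

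Chaining the two reduced biconditionals then yields the claim directly: $\M^{+}$ exists if and only if the dual index of $\M$ is one, which in turn holds if and only if the rank condition holds. Since each step is a genuine ``if and only if,'' both directions of the desired equivalence are obtained simultaneously, with no need to treat existence and nonexistence separately.

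I do not anticipate any substantive obstacle in this proof, as it is a purely formal concatenation of two already-proven equivalences. The only point that genuinely requires care is the verification that the two rank conditions being matched are literally identical—both being $\rk\left(A\right)=\rk\left(\left[\begin{matrix}A&B\left(I_n-AA^{\#}\right)\end{matrix}\right]\right)$—so that no hidden discrepancy (for instance a transpose, a different projector $I_n-A^{+}A$ versus $I_n-AA^{\#}$, or a swapped block) sneaks in when the equivalences are composed. Once that bookkeeping is confirmed, the result follows in two lines.
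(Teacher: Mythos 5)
Your proof is correct and follows essentially the same strategy as the paper's: both arguments pivot through the condition ``the dual index of ${\M}$ is one,'' using Theorem \ref{The-2-4} to connect that pivot to the rank equality. The only difference is that you pass from the existence of ${\M}^{+}$ to the pivot in one step via Theorem \ref{zhibiaoMPni}, whereas the paper routes through Lemma \ref{lemma1} and Theorem \ref{dingli1}; this is a slightly more direct citation path but not a genuinely different argument.
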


\begin{proof}

``$\Longrightarrow$''
\
If ${\M}^{+}$ exists,
then $\rk  \left(\left[\begin{matrix}
       B&       A
\\     A&         0
\end{matrix}\right]\right)=2\rk (A)$ is known by the Lemma\ref{lemma1}.
Therefore,
 when the index of $A$ is one, the dual index of ${\M}$ is equal to one from Theorem \ref{dingli1}.
 It is also known
 $\rk (A)=\rk\left (\left[\begin{matrix}A&B(I_n-AA^{\#}\end{matrix}\right]\right)$
 from Theorem \ref{The-2-4}.

``$\Longleftarrow$''
\
 Let $\rk (A)=\rk\left (\left[\begin{matrix}A&B(I_n-AA^{\#}\end{matrix}\right]\right)$.
 When the index of $A$ is one, the dual index of ${\M}$ is equal to one from Theorem \ref{The-2-4}.
 So $\rk  \left(\left[\begin{matrix}
       B&       A
\\     A&         0
\end{matrix}\right]\right)=2\rk (A)$ from Theorem \ref{dingli1}.
And by (i), (iii) of Lemma \ref{lemma1}, we know that ${\M}^{+}$ exists.
\end{proof}

\section{Dual Core Generalized Inverse}
\label{Section-DCGI}
It is well known that
one matrix is group invertible
if and only if
its index is one
if and only if
it is core invertible
in ${\mathbb{R}_{n, n}}$.
In Section \ref{Section-DualIndex},
we
get that  the dual index of ${\M}=A+\varepsilon B$ is one
if and only if
${\M}^{\#}$ exists.
In this section,
we introduce DCGI,
 give some properties and characterizations of the inverse,
and
consider relationships among  DCGI, DGGI and  dual index one.
 Meanwhile,
 we also give  characterizations of some other interesting dual generalized inverses.

\subsection{Definition and uniqueness  of DCGI}
\begin{definition}
\label{Def-2-1}

Let ${\M} $ be an $n$-square dual matrix.
If there exists  an $n$-square dual matrix $\widehat{G}$  satisfying
\begin{align}
\label{Def-2-1-Eq-1}
 (1)\
 {\M}\widehat{G}{\M}={\M},
  \quad
   \left(2'\right) \
  {\M}\widehat{G}^2=\widehat{G},
   \quad
    (3)\
    \left({\M}\widehat{G}\right)^\mathrm{ T }={\M}\widehat{G},
\end{align}
then ${\M}$ is called a dual core generalized invertible  matrix,
and $\widehat{G}$ is the dual core generalized inverse (DCGI) of ${\M}$,
which is recorded as  ${\M}^{\mbox{\tiny\textcircled{\#}}}$.
\end{definition}

\begin{theorem}
\label{The-2-1}
Let ${\M}=A+\varepsilon B\in  {\mathbb{D}}_{n, n}$,
then the existence of the DCGI of ${\M}$ is equivalent to the existence of $G$ and $R$,
which meets the following requirements:
$G=A^{\mbox{\tiny\textcircled{\#}}}$
and
 \begin{subnumcases}{}
\label{The-2-1-Eq-1}
BGA+ARA+AGB =B,
\\
\label{The-2-1-Eq-2}
AGR+ARG+BG^2 =R,
\\
\label{The-2-1-Eq-3}
\left(AR+BG\right)^\mathrm{ T }=AR +BG.
 \end{subnumcases}
Furthermore,
 $\widehat{G}=G+\varepsilon R$ is the DCGI of ${\M}$.
\end{theorem}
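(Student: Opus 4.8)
The plan is to substitute $\widehat{G} = G + \varepsilon R$ with $\M = A + \varepsilon B$ into the three defining equations of Definition \ref{Def-2-1}, expand each product in powers of $\varepsilon$, and then equate the real and dual parts separately (using $\varepsilon^2 = 0$). This converts the three dual-matrix equations into six real-matrix equations: three governing the real part $G$ and three coupling $G$ and $R$. The real-part equations should force $G$ to be exactly the core inverse $A^{\mbox{\tiny\textcircled{\#}}}$ of $A$, while the dual-part equations should reproduce precisely (\ref{The-2-1-Eq-1})--(\ref{The-2-1-Eq-3}).

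Concretely, I would proceed equation by equation. For condition $(1)$, $\M\widehat{G}\M = \M$ expands to $AGA + \varepsilon(AGB + AR A + BGA) = A + \varepsilon B$; the real part gives $AGA = A$ and the dual part gives exactly (\ref{The-2-1-Eq-1}). For condition $(2')$, note $\widehat{G}^2 = G^2 + \varepsilon(GR + RG)$, so $\M\widehat{G}^2 = AG^2 + \varepsilon\bigl(A(GR+RG) + BG^2\bigr)$; setting this equal to $G + \varepsilon R$ yields the real equation $AG^2 = G$ and the dual equation (\ref{The-2-1-Eq-2}). For condition $(3)$, $\M\widehat{G} = AG + \varepsilon(AR + BG)$, and since transposition acts blockwise on real and dual parts, the symmetry $(\M\widehat{G})^{\mathrm{T}} = \M\widehat{G}$ splits into the real condition $(AG)^{\mathrm{T}} = AG$ and the dual condition (\ref{The-2-1-Eq-3}).

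At this stage the three real equations read $AGA = A$, $AG^2 = G$, and $(AG)^{\mathrm{T}} = AG$, which are precisely the defining equations of the core inverse. Hence $G$ exists if and only if $\mathrm{Ind}(A) = 1$, in which case $G = A^{\mbox{\tiny\textcircled{\#}}}$ is the unique solution. I would invoke the characterization recalled in the introduction (the core inverse is the unique matrix satisfying $AXA = A$, $AX^2 = X$, $(AX)^{\mathrm{T}} = AX$) to identify $G = A^{\mbox{\tiny\textcircled{\#}}}$. Substituting this fixed $G$ back leaves exactly the three dual-part equations (\ref{The-2-1-Eq-1})--(\ref{The-2-1-Eq-3}) as the conditions on $R$, establishing the claimed equivalence, and the final sentence $\widehat{G} = G + \varepsilon R$ follows by construction.

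The computation itself is routine bookkeeping in $\varepsilon$; the only point requiring a little care is the transpose in condition $(3)$. I must verify that $\M^{\mathrm{T}} = A^{\mathrm{T}} + \varepsilon B^{\mathrm{T}}$, as defined in the introduction, so that transposition genuinely preserves the real/dual decomposition and the splitting of the symmetry condition into separate real and dual identities is legitimate. Once that observation is in place, matching coefficients of $1$ and $\varepsilon$ is unambiguous, and the equivalence is immediate.
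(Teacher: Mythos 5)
Your proposal is correct and follows essentially the same route as the paper: expand each of the three defining equations of Definition \ref{Def-2-1} in powers of $\varepsilon$, read off the real parts $AGA=A$, $AG^2=G$, $(AG)^{\mathrm{T}}=AG$ to identify $G=A^{\mbox{\tiny\textcircled{\#}}}$, and recognize the dual parts as exactly (\ref{The-2-1-Eq-1})--(\ref{The-2-1-Eq-3}). No substantive difference from the paper's argument.
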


\begin{proof}
From ${\M}=A+\varepsilon B$,
 $\widehat{G}=G+\varepsilon R$
 and
\begin{align}
\nonumber
\left\{
\begin{array}{l}
{\M}\widehat{G}{\M}
=
 \left(A+\varepsilon B\right)\left(G+\varepsilon R\right)\left(A+\varepsilon B\right)
=AGA+\varepsilon\left(BGA+ARA+AGB\right),
\\
\nonumber
{\M}\widehat{G}^2
=
 \left(A+\varepsilon B\right)\left(G+\varepsilon R\right)^2
=AG^2+\varepsilon\left(AGR+ARG+BG^2\right),
\\
\nonumber
({\M}\widehat{G})^\mathrm{ T }
=\left(\left(A+\varepsilon B\right)\left(G+\varepsilon R\right)\right)^\mathrm{ T }
=(AG)^\mathrm{ T } +\varepsilon\left(AR+BG\right)^\mathrm{ T },
\end{array}\right.
\end{align}
we get that
${\M}\widehat{G}{\M}={\M}$,
 $ {\M}\widehat{G}^2=\widehat{G}$
and
    $({\M}\widehat{G})^\mathrm{ T }={\M}\widehat{G}$ are respectively equivalent to
\begin{align}
\nonumber
\left\{
\begin{array}{l}
AGA=A,
\
BGA+ARA+AGB=B,
\\
AG^2=G,
\
AGR+ARG+BG^2=R,
\\
\left(AG\right)^\mathrm{ T }=AG,\
\left(AR+BG\right)^\mathrm{ T }=AR+BG.
\end{array}\right.
\end{align}
  Since $AGA=A$, $AG^2=G$ and $\left(AG\right)^\mathrm{ T }=AG$,
  we have  $G=A^{\mbox{\tiny\textcircled{\#}}}$.
  Therefore, if the DCGI of ${\M}$ exists,
  and $\widehat{G}=G +\varepsilon R$ is the DCGI of ${\M}$,
 then $G =A^{\mbox{\tiny\textcircled{\#}}}$ and (\ref{The-2-1-Eq-1}), (\ref{The-2-1-Eq-2})
 and (\ref{The-2-1-Eq-3})   are established.

Conversely, let$\widehat{G}=G+\varepsilon R$ satisfy
 (\ref{The-2-1-Eq-1}), (\ref{The-2-1-Eq-2}),  (\ref{The-2-1-Eq-3})
  and $G=A^{\mbox{\tiny\textcircled{\#}}}$.
By applying Definition \ref{Def-2-1},
it is easy to check that $\widehat{G}$ is the DCGI of $\M$.
So,  the DCGI of ${\M}$ exists.
\end{proof}

According to Theorem \ref{The-2-1},
we can see that the existence of the core inverse of  $A$
is only a necessary condition for the dual core generalized invertibility of  ${\M}$,
that is to say,
even though the real part of a dual matrix is core invertible,
it may be also a dual matrix without DCGI.

\begin{example}
Let
\begin{align}
\label{gongshi1}
{\M}=A+\varepsilon B=
\left[\begin{matrix}
    a&         0&     0
\\  0&         b&     0
\\  0&         0&     0
\end{matrix}\right]
+\varepsilon
\left[\begin{matrix}
    b_{11}&    b_{12}&     b_{13}
\\  b_{21}&    b_{22}&     b_{23}
\\  b_{31}&    b_{32}&     b_{33}
\end{matrix}\right],
\end{align}
where $a, b$ and $b_{33}$ are not 0, $b_ {ij} \left( i=1, 2, 3,j=1, 2 \right) $,
$b_{13}$ and $b_{23}$  are arbitrary real numbers.
It is obvious that the real part $A$ is core invertible and
\begin{align}
\label{gongshi2}
A^{\mbox{\tiny\textcircled{\#}}}=
\left[\begin{matrix}
    \frac1a&         0&     0
\\  0&         \frac1b&     0
\\  0&               0&     0
\end{matrix}\right].
\end{align}

Let $\widehat{G}=A^{\mbox{\tiny\textcircled{\#}}}+\varepsilon R$, where
\begin{align}
\label{w1}
R=
 \left[\begin{matrix}
    r_{1}&    r_{2}&     r_{3}
\\  r_{4}&    r_{5}&     r_{6}
\\  r_{7}&    r_{8}&     r_{9}
\end{matrix}\right].
\end{align}

According to Theorem $\ref{The-2-1}$, if $\widehat{G}$ is the DCGI of   ${\M}$,
then $R$ is a suitable matrix of order $n$ and satisfies $(\ref{The-2-1-Eq-1})$,
$(\ref{The-2-1-Eq-2})$ and $(\ref{The-2-1-Eq-3})$.
The equation $(\ref{The-2-1-Eq-1})$ requires:
\begin{align}
\label{gao23}
\Delta
=
\underbrace{{\underbrace{BA^{\mbox{\tiny\textcircled{\#}}}A}_{S_{1}}
+
\underbrace{ARA}_{S_{2}}}
+
\underbrace{AA^{\mbox{\tiny\textcircled{\#}}}B}_{S_{3}}}_{S_{123}}-B=0.
\end{align}

Now we will prove that for any ${\M}$ constructed by $A$ and $B$ in $(\ref{gongshi1})$,
the equation $(\ref{gao23})$ does not satisfy any $3$-order matrix $R$.
Therefore, we need to prove  $S_{123}-B\neq 0$.
 As shown below,
from the above $(\ref{gongshi1})$, $(\ref{gongshi2})$ and $(\ref{w1})$, we have
\begin{align}
\nonumber
S_{1}&=BA^{\mbox{\tiny\textcircled{\#}}}A=
\left[\begin{matrix}
    b_{11}&    b_{12}&     0
\\  b_{21}&    b_{22}&     0
\\  b_{31}&    b_{32}&     0
\end{matrix}\right],
\\
\nonumber
S_{2}&=AA^{\mbox{\tiny\textcircled{\#}}}B=
\left[\begin{matrix}
    b_{11}&    b_{12}&     b_{13}
\\  b_{21}&    b_{22}&     b_{23}
\\       0&         0&       0
\end{matrix}\right],
\\
\nonumber
S_{3}&=ARA=
\left[\begin{matrix}
     a^{2}r_{1}&                             abr_{2}&         0
\\      abr_{4}&                          b^{2}r_{5}&         0
\\            0&                                   0&         0
\end{matrix}\right],
\\
\nonumber
S_{123}&=S_{1}+S_{2}+S_{3}=
\left[\begin{matrix}
    a^{2}r_{1}+2b_{11}&                        2b_{12}+abr_{2}&       b_{13}
\\     2b_{21}+abr_{4}&                       2b_{22}+b^2r_{5}&       b_{23}
\\              b_{31}&                                 b_{32}&         0
\end{matrix}\right],
\\
\nonumber
\Delta &=S_{123}-B=
\left[\begin{matrix}
     a^{2}r_{1}+b_{11}&            b_{12}+abr_{2}&                 0
\\   b_{21}+abr_{4}&               b_{22}+b^2r_{5}&                0
\\    0&                                 0&                       -b_{33}
\end{matrix}\right].
\end{align}
If $b_ {33}$ is not $0$, so $\Delta$ is not $0$ {no matter what the matrix $R$ is}.
Therefore, the core inverse condition $(\ref{The-2-1-Eq-1})$ is not satisfied, and ${\M}$ of the set has no DCGI.
\end{example}

In the following Theorem \ref{dingli3.2}, we consider the uniqueness of DCGI.
\begin{theorem}
\label{dingli3.2}
The DCGI  of any dual matrix is unique if it exists.
\end{theorem}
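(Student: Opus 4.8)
The plan is to show that any dual matrix $\M$ can have at most one DCGI. Since Theorem \ref{The-2-1} already characterizes the DCGI as $\widehat{G}=G+\varepsilon R$ with $G=A^{\mbox{\tiny\textcircled{\#}}}$ forced uniquely by the real part, the only thing left to establish is that the dual part $R$ is uniquely determined. So I would suppose that $\M$ admits two DCGIs, say $\widehat{G}_1=G+\varepsilon R_1$ and $\widehat{G}_2=G+\varepsilon R_2$ (both share the same real part $G=A^{\mbox{\tiny\textcircled{\#}}}$ by Theorem \ref{The-2-1}), and aim to prove $R_1=R_2$.

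First I would write down the three defining equations of Theorem \ref{The-2-1} for each $R_i$ and subtract, setting $\Delta R=R_1-R_2$. Since the terms $BGA$ and $AGB$ in \eqref{The-2-1-Eq-1} do not involve $R$, subtracting kills them and leaves the homogeneous system
\begin{align*}
A\,\Delta R\,A &= 0,\\
AG\,\Delta R + A\,\Delta R\,G &= 0,\\
\left(A\,\Delta R\right)^{\mathrm{T}} &= A\,\Delta R.
\end{align*}
The goal then reduces to showing that this homogeneous system forces $\Delta R=0$. I would carry this out using the same Hartwig--Spindelböck decomposition \eqref{core-1} of $A$ that drives the rest of the paper, together with the explicit forms \eqref{core-5}--\eqref{core-4} of $A^{\mbox{\tiny\textcircled{\#}}}$ and $A^{\#}$. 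Writing $\Delta R = U\bigl[\begin{smallmatrix} R_{11}&R_{12}\\ R_{21}&R_{22}\end{smallmatrix}\bigr]U^{\mathrm{T}}$ in the block form compatible with $A$, the equation $A\,\Delta R\,A=0$ together with the symmetry of $A\,\Delta R$ and the intertwining relation $AG\,\Delta R+A\,\Delta R\,G=0$ should pin down each block in turn: $A\,\Delta R\,A=0$ constrains the top block via nonsingularity of $\Sigma K$ (recall $K$ is invertible since $\Ind(A)=1$), the symmetry condition relates the off-diagonal blocks, and the commutator-type relation with $G$ closes the system.

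The main obstacle I expect is the bookkeeping in this block computation: one must verify that the three conditions, taken together, leave no free parameters in $\Delta R$, and in particular that the lower blocks $R_{21},R_{22}$ are killed (these are exactly the blocks that, in \eqref{The-2-1-Eq-1}, were governed by $ARA$ and by the range/rank conditions guaranteeing existence). An alternative, cleaner route that avoids the decomposition entirely would be to exploit the standard uniqueness argument for the classical core inverse in the dual setting: observe that conditions $(1)$ and $(2')$ force $\widehat{G}=\M\widehat{G}^2$ with $\M\widehat{G}$ idempotent and symmetric, so $\M\widehat{G}$ is the orthogonal-projection part and $\R(\widehat{G})$ is determined; two solutions would then give the same projector $\M\widehat{G}_1=\M\widehat{G}_2$ and the same range, whence $\widehat{G}_1=\M\widehat{G}_1^2=\M\widehat{G}_2^2=\widehat{G}_2$. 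I would likely present the subtraction argument reduced to the homogeneous system, then resolve it by the decomposition, since that keeps the proof self-contained within the machinery already set up in Sections \ref{Section-DualIndex} and \ref{Section-DCGI}.
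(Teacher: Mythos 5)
Your overall strategy --- fix the real part as $A^{\mbox{\tiny\textcircled{\#}}}$ via Theorem \ref{The-2-1}, subtract the defining equations for two candidate dual parts, and show the difference $\Delta R$ vanishes via the decomposition (\ref{core-1}) --- is exactly the paper's, but the system you write down is wrong in the one place that matters. Condition $(2')$ is $\M\widehat{G}^{2}=\widehat{G}$, whose dual part reads $AGR+ARG+BG^{2}=R$: the unknown $R$ sits on the \emph{right-hand side}. Subtracting the two instances therefore gives
\begin{align*}
AG\,\Delta R+A\,\Delta R\,G=\Delta R,
\end{align*}
not $AG\,\Delta R+A\,\Delta R\,G=0$. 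This is not a cosmetic slip: the system as you state it does not force $\Delta R=0$. Take $A=G=\left[\begin{smallmatrix}1&0\\0&0\end{smallmatrix}\right]$ and $\Delta R=\left[\begin{smallmatrix}0&0\\1&0\end{smallmatrix}\right]$; then $A\,\Delta R=0$, so all three of your equations hold while $\Delta R\neq 0$. The term $\Delta R$ on the right of the corrected relation is precisely what eliminates the lower blocks that you yourself single out as the main obstacle: the paper first deduces $A\,\Delta R=0$ from the first and third equations alone (post-multiply $A\,\Delta R\,A=0$ by $A^{\mbox{\tiny\textcircled{\#}}}$ and use the symmetry of $A\,\Delta R$), whereupon the corrected relation collapses to $\Delta R=AA^{\mbox{\tiny\textcircled{\#}}}\,\Delta R$; since $AA^{\mbox{\tiny\textcircled{\#}}}=U\left[\begin{smallmatrix}I_{r}&0\\0&0\end{smallmatrix}\right]U^{\mathrm{T}}$ this annihilates the bottom block row, and the remaining blocks are then killed by $A\,\Delta R\,A=0$ (nonsingularity of $\Sigma K$) and the symmetry condition, exactly as you anticipate. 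With that single correction your argument coincides with the paper's proof.

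Your alternative ``projector'' route is also not yet a proof. Even granting that $\M\widehat{G}$ is a symmetric idempotent with prescribed range and that this determines it uniquely --- a claim that needs its own argument over the dual numbers, where $\varepsilon$ is a zero divisor --- the chain $\widehat{G}_{1}=\M\widehat{G}_{1}^{2}=\M\widehat{G}_{2}^{2}=\widehat{G}_{2}$ is circular: knowing $\M\widehat{G}_{1}=\M\widehat{G}_{2}$ gives $\widehat{G}_{1}=(\M\widehat{G}_{1})\widehat{G}_{1}=(\M\widehat{G}_{2})\widehat{G}_{1}$, which is not visibly $\widehat{G}_{2}$; the middle equality of your chain presupposes what is to be shown.
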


\begin{proof}
 Let ${\M}=A+\varepsilon B\in  {\mathbb{D}}_{n, n}$,
 $\rk \left(A\right)= r$,
and ${\M}^{\mbox{\tiny\textcircled{\#}}}=A^{\mbox{\tiny\textcircled{\#}}}+\varepsilon R$.
Suppose that $\widehat{T}$ is any DCGI of ${\M}$,
  from Theorem \ref{The-2-1} and uniqueness of the core inverse of real matrix,
we can denote $\widehat{T}$ as
$\widehat{T}=A^{\mbox{\tiny\textcircled{\#}}}+\varepsilon \widetilde{R}$. Furthermore,   write
$$X=R-\widetilde{R}.$$

Next, we prove
  $X=0$.

From  (\ref{The-2-1-Eq-1}),  it can be seen that
\begin{align}
\label{gao7}
\left\{
\begin{array}{l}
 B=BA^{\mbox{\tiny\textcircled{\#}}}A+ARA+AA^{\mbox{\tiny\textcircled{\#}}}B,
 \\
 B=BA^{\mbox{\tiny\textcircled{\#}}}A+A\widetilde{R}A+AA^{\mbox{\tiny\textcircled{\#}}}B.
\end{array}\right.
\end{align}
Through the first equation minus the second equation in (\ref{gao7}),
we get
\begin{align}
\label{gao8}
 0=A\left(R-\widetilde{R}\right)A=AXA.
\end{align}
From     (\ref{The-2-1-Eq-2}), we have
\begin{align}
\label{gao9}
\left\{
\begin{array}{l}
 R=AA^{\mbox{\tiny\textcircled{\#}}}R
 +ARA^{\mbox{\tiny\textcircled{\#}}}
 +B\left(A^{\mbox{\tiny\textcircled{\#}}}\right)^2,
  \\
  \widetilde{R}
  =AA^{\mbox{\tiny\textcircled{\#}}}\widetilde{R}
  +A\widetilde{R}A^{\mbox{\tiny\textcircled{\#}}}
  +B\left(A^{\mbox{\tiny\textcircled{\#}}}\right)^2.
\end{array}\right.
\end{align}
 Through the first equation minus the second equation in (\ref{gao9}),
we have
\begin{align}
\label{gao10}
 X=R-\widetilde{R}
 =AA^{\mbox{\tiny\textcircled{\#}}}X+AXA^{\mbox{\tiny\textcircled{\#}}}.
\end{align}
Similarly, from the (\ref{The-2-1-Eq-3}),
it can be seen that $R$ and $\widetilde{R}$   satisfy
\begin{align}
\label{gao11}
\left\{
\begin{array}{l}
 \left(AR+BA^{\mbox{\tiny\textcircled{\#}}}\right)^\mathrm{ T }
 =AR+BA^{\mbox{\tiny\textcircled{\#}}},
 \\
  \left(A\widetilde{R}+BA^{\mbox{\tiny\textcircled{\#}}}\right)^\mathrm{ T }
  =A\widetilde{R}+BA^{\mbox{\tiny\textcircled{\#}}}.
\end{array}\right.
\end{align}
 Through the first equation minus the second equation in (\ref{gao11}),
we have
 $\left(A\left(R-\widetilde{R}\right)\right)^\mathrm{ T }
 =A\left(R-\widetilde{R}\right)$,
that is,
\begin{align}
\label{gao12}
 \left(AX\right)^\mathrm{ T }=AX.
\end{align}

 By post-multiplying both sides of   (\ref{gao8})
  by $A^{\mbox{\tiny\textcircled{\#}}}$,
 and
 by applying   (\ref{gao12})    we get
\begin{align}
\nonumber
0&=AXA=AXAA^{\mbox{\tiny\textcircled{\#}}}
=\left(AX\right)^\mathrm{ T }\left(AA^{\mbox{\tiny\textcircled{\#}}}\right)^\mathrm{ T }
=X^\mathrm{ T }A^\mathrm{ T }\left(A^{\mbox{\tiny\textcircled{\#}}}\right)^\mathrm{ T }A^\mathrm{ T }
\\
\nonumber
&=X^\mathrm{ T }\left(AA^{\mbox{\tiny\textcircled{\#}}}A\right)^\mathrm{ T }
=X^\mathrm{ T }A^\mathrm{ T }=(AX)^\mathrm{ T }=AX,
\end{align}
that is,
 $AX=0$.
Thus,
the equation (\ref{gao10}) is simplified to
\begin{align}
\label{core-core}
X=R-\widetilde{R}=AA^{\mbox{\tiny\textcircled{\#}}}X.
\end{align}

Let   the decomposition of   $A$ be as  in $(\ref{core-1})$.
Write
\begin{align}
\label{c-c-1}
X=
U\left[\begin{matrix}
      X_1&     X_2
\\    X_3&     X_4
\end{matrix}\right]U^\mathrm{ T },
\end{align}
where $X_1\in \mathbb{R}_{r, r}$.
Put   (\ref{core-1}),  (\ref{core-6}) and (\ref{c-c-1})  into (\ref{core-core}),
then
we get
\begin{align}
\nonumber
AA^{\mbox{\tiny\textcircled{\#}}}X
&=U\left[\begin{matrix}
     \Sigma K&     \Sigma L
\\          0&         0
\end{matrix}\right]U^\mathrm{ T }
U\left[\begin{matrix}
     (\Sigma K)^{-1}&     0
\\                 0&     0
\end{matrix}\right]U^\mathrm{ T }
U\left[\begin{matrix}
      X_1&     X_2
\\    X_3&     X_4
\end{matrix}\right]U^\mathrm{ T }
\nonumber
\\
&=U\left[\begin{matrix}
     I_r&     0
\\     0&     0
\end{matrix}\right]U^\mathrm{ T }
U\left[\begin{matrix}
      X_1&     X_2
\\    X_3&     X_4
\end{matrix}\right]U^\mathrm{ T }
\nonumber
\\
\nonumber
&=U\left[\begin{matrix}
      X_1&     X_2
\\      0&      0
\end{matrix}\right]U^\mathrm{ T }=
U\left[\begin{matrix}
      X_1&     X_2
\\    X_3&     X_4
\end{matrix}\right]U^\mathrm{ T }=X.
\end{align}
Therefore, $X_3=X_4=0$, that is,
\begin{align}
\label{X-jianhua}
X=U\left[\begin{matrix}
      X_1&     X_2
\\      0&     0
\end{matrix}\right]U^\mathrm{ T }.
\end{align}
 And to make $X=0$ hold, we only need to prove $X_1=0$ and $X_2=0$.

By substituting (\ref{core-1}) and (\ref{X-jianhua}) into (\ref{gao8}),
we get
\begin{align*}
AXA
&=U\left[\begin{matrix}
          \Sigma K&     \Sigma L
\\               0&         0
\end{matrix}\right]U^\mathrm{ T }
U\left[\begin{matrix}
      X_1&     X_2
\\      0&     0
\end{matrix}\right]U^\mathrm{ T }
U\left[\begin{matrix}
          \Sigma K&     \Sigma L
\\               0&         0
\end{matrix}\right]U^\mathrm{ T }
\nonumber
\\
&=U\left[\begin{matrix}
              \Sigma KX_1&         \Sigma KX_2
\\                      0&                   0
\end{matrix}\right]U^\mathrm{ T }
U\left[\begin{matrix}
          \Sigma K&      \Sigma L
\\               0&         0
\end{matrix}\right]U^\mathrm{ T }
\nonumber
\\
&=U\left[\begin{matrix}
    \Sigma KX_1\Sigma K&    \Sigma KX_1\Sigma L
\\                    0&                      0
\end{matrix}\right]U^\mathrm{ T }=0.
\end{align*}
Therefore,
\begin{subnumcases}{}
 \label{1-a}
 \Sigma KX_1\Sigma K=0,
 \\
 \label{1-b}
 \Sigma KX_1\Sigma L=0.
 \end{subnumcases}
Since $A$ is core invertible,
from Theorem \ref{The-2-1}, we see that $K$ is nonsingular.
 Thus,
  from (\ref{X-jianhua}) and  (\ref{1-a}),
  we   get $X_{1}=0$.
    So \begin{align}
\label{X-jianhua2}
X=U\left[\begin{matrix}
        0&     X_2
\\      0&     0
\end{matrix}\right]U^\mathrm{ T }.
\end{align}

Similarly,
by substituting (\ref{core-1}) and (\ref{X-jianhua2}) into (\ref{gao12}),
we get
\begin{align*}
\left(U\left[\begin{matrix}
          \Sigma K&     \Sigma L
\\               0&         0
\end{matrix}\right]
\left[\begin{matrix}
      0&     X_2
\\    0&     0
\end{matrix}\right]U^\mathrm{ T }\right)^\mathrm{ T }
=
U\left[\begin{matrix}
          \Sigma K&     \Sigma L
\\               0&         0
\end{matrix}\right]
\left[\begin{matrix}
      0&     X_2
\\    0&     0
\end{matrix}\right]U^\mathrm{ T },
\end{align*}
that is,
\begin{align*}
\left(U\left[\begin{matrix}
                                0&           \Sigma KX_2
\\                              0&                0
\end{matrix}\right]U^\mathrm{ T }\right)^\mathrm{ T }
=
U\left[\begin{matrix}
                                 0&          \Sigma KX_2
\\                              0&                0
\end{matrix}\right]U^\mathrm{ T }.
\end{align*}
Continue to simplify the above equation and get
\begin{align*}
U\left[\begin{matrix}
                                              0&                0
\\        \left(\Sigma KX_2\right)^\mathrm{ T }&                0
\end{matrix}\right]U^\mathrm{ T }
=
U\left[\begin{matrix}
                                0&            \Sigma KX_2
\\                              0&                0
\end{matrix}\right]U^\mathrm{ T }.
\end{align*}
 Thus,
$\Sigma KX_2=0$.
Considering that both $\Sigma$ and $K$ are nonsingular matrices,
we get $X_2=0.$

To sum up, we obtain $X_1=0$, $X_2=0$, $X_3=0$ and $X_4=0$.
 From (\ref{c-c-1}), we get  $X=0$, which can also be understood as
$$R=\widetilde{R}.$$
Therefore,
$R$ satisfying
 (\ref{The-2-1-Eq-1}),
  (\ref{The-2-1-Eq-2}) and (\ref{The-2-1-Eq-3})
is unique,
that is,
if the DCGI of   ${\M}$ exists,
then the inverse is unique.
\end{proof}

\subsection{Characterizations and  properties  of DCGI}

\begin{theorem}
\label{core-20}
Let ${\M}=A+\varepsilon B\in  {\mathbb{D}}_{n, n}$,
 then its DCGI exists if and only if its dual index is one.
\end{theorem}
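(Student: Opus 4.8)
The plan is to prove both implications through the structural criterion of Theorem \ref{The-2-1} together with the block characterization of Theorem \ref{The-2-40}. Throughout I would fix the Hartwig decomposition $(\ref{core-1})$ of $A$, the matching form $(\ref{core-6})$ of $A^{\mbox{\tiny\textcircled{\#}}}$, the block form $(\ref{core-7})$ of $B$, and write the unknown dual part as $R=U\left[\begin{smallmatrix}R_1 & R_2\\ R_3 & R_4\end{smallmatrix}\right]U^{\mathrm{T}}$ with $R_1\in\mathbb{R}_{r,r}$. By Theorem \ref{The-2-1}, the DCGI of $\M$ exists exactly when $A^{\mbox{\tiny\textcircled{\#}}}$ exists (equivalently $\Ind(A)=1$) and the system $(\ref{The-2-1-Eq-1})$--$(\ref{The-2-1-Eq-3})$ admits a solution $R$; thus the whole statement reduces to showing that, under $\Ind(A)=1$, this system is solvable if and only if $B_4=B_3K^{-1}L$, since the latter is precisely the dual-index-one condition of Theorem \ref{The-2-40}.

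For the forward implication, existence of the DCGI forces $G=A^{\mbox{\tiny\textcircled{\#}}}$ (hence $\Ind(A)=1$) and makes $(\ref{The-2-1-Eq-1})$ hold. I would substitute the decompositions, using $GA=U\left[\begin{smallmatrix}I_r & K^{-1}L\\ 0 & 0\end{smallmatrix}\right]U^{\mathrm{T}}$ and $AG=U\left[\begin{smallmatrix}I_r & 0\\ 0 & 0\end{smallmatrix}\right]U^{\mathrm{T}}$, and observe that $ARA$ always has a vanishing second block row. Reading off the $(2,2)$ block of the identity $ARA=B-BGA-AGB$ then yields $B_3K^{-1}L-B_4=0$, and Theorem \ref{The-2-40} gives dual index one.

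For the converse, Theorem \ref{The-2-40} supplies $\Ind(A)=1$ (so $K$ is nonsingular and $A^{\mbox{\tiny\textcircled{\#}}}$ exists) together with $B_4=B_3K^{-1}L$; it remains to exhibit an $R$ solving all three equations. Expanding $(\ref{The-2-1-Eq-1})$ in blocks, the right-hand side $B-BGA-AGB$ equals $U\left[\begin{smallmatrix}-B_1 & -B_1K^{-1}L\\ 0 & B_4-B_3K^{-1}L\end{smallmatrix}\right]U^{\mathrm{T}}$, whose $(2,2)$ block vanishes by hypothesis, so $(\ref{The-2-1-Eq-1})$ collapses to the single consistent relation $\Sigma KR_1+\Sigma LR_3=-B_1K^{-1}\Sigma^{-1}$ and is therefore solvable; the residual freedom in $R$ is then fixed by imposing $(\ref{The-2-1-Eq-2})$ and $(\ref{The-2-1-Eq-3})$. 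I would carry this out block by block to produce an explicit $R$, and Theorem \ref{dingli3.2} guarantees it is the unique one.

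The main obstacle is the converse: unlike the forward direction, which needs only one block of one equation, here a single $R$ must satisfy the three coupled equations simultaneously, and the delicate point is checking that $(\ref{The-2-1-Eq-1})$ and the symmetry constraint $(\ref{The-2-1-Eq-3})$ do not conflict. The condition $B_4=B_3K^{-1}L$ is exactly what removes the obstruction in the $(2,2)$ block and renders the system consistent. As an alternative route for the converse, I note that dual index one forces both $\M^{\#}$ (Theorem \ref{zhibiaoqunni-xin}) and $\M^{+}$ (Theorem \ref{zhibiaoMP} with Lemma \ref{lemma1}) to exist, so one may instead define the candidate $\widehat{G}=\M^{\#}\M\M^{+}$ in imitation of the real identity $A^{\mbox{\tiny\textcircled{\#}}}=A^{\#}AA^{+}$ and verify Definition \ref{Def-2-1} directly from the defining identities of $\M^{\#}$ and $\M^{+}$.
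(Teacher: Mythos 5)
Your forward direction is essentially the paper's: invoke Theorem \ref{The-2-1} to force $G=A^{\mbox{\tiny\textcircled{\#}}}$ and ${\rm Ind}(A)=1$, substitute the decompositions $(\ref{core-1})$, $(\ref{core-6})$, $(\ref{core-7})$ into $(\ref{The-2-1-Eq-1})$, note that $ARA$ has zero second block row, and read $B_4=B_3K^{-1}L$ off the $(2,2)$ block; your block computation of $B-BGA-AGB$ checks out. The converse is where the two routes diverge in completeness. The paper does the real work there by writing down an explicit dual part $R$ (its equation $(\ref{G-gongshi})$) and verifying all three axioms of Definition \ref{Def-2-1} by direct multiplication; your primary sketch correctly observes that $B_4=B_3K^{-1}L$ makes $(\ref{The-2-1-Eq-1})$ consistent, but then defers the delicate point --- producing one $R$ that simultaneously satisfies $(\ref{The-2-1-Eq-2})$ and $(\ref{The-2-1-Eq-3})$ --- with ``I would carry this out block by block,'' which is precisely the step that cannot be waved at, since $(\ref{The-2-1-Eq-1})$ leaves $R_2$ and $R_4$ entirely free and it is not a priori clear the remaining constraints are compatible. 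Fortunately your alternative route closes this gap cleanly and should be promoted to the main argument: dual index one gives existence of ${\M}^{\#}$ and ${\M}^{+}$ by Theorems \ref{zhibiaoqunni-xin} and \ref{zhibiaoMPni} (both proved independently of the present theorem, so there is no circularity), and the candidate $\widehat{G}={\M}^{\#}{\M}{\M}^{+}$ satisfies the three defining identities by a short formal computation using only ${\M}{\M}^{\#}{\M}={\M}$, ${\M}{\M}^{\#}={\M}^{\#}{\M}$, ${\M}{\M}^{+}{\M}={\M}$ and $({\M}{\M}^{+})^{\mathrm T}={\M}{\M}^{+}$. That is exactly the verification the paper carries out later for its compact formula (Theorem \ref{c-jinhua}), so this version of your proof is both correct and arguably tidier than the paper's explicit block construction.
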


\begin{proof}
Suppose that $A$ is core invertible.
Let the decomposition of $A$ be as in $(\ref{core-1})$,
and the form of $B$ be as in (\ref{core-7}).

``$\Longrightarrow$''
\
Assuming that the dual core inverse
${\M}^{\mbox{\tiny\textcircled{\#}}}=G+\varepsilon R$ of the dual matrix ${\M}=A+\varepsilon B$ exists,
it can be seen
from Theorem \ref{The-2-1}
 that
 the real part matrix $A$ is core invertible
and
$G=A^{\mbox{\tiny\textcircled{\#}}}$,
so the index of $A$ is one.

 Since the DCGI exists, then the equation (\ref{The-2-1-Eq-1}) holds.
  Put (\ref{core-1}), (\ref{core-6}) and (\ref{core-7})
  into
  $BGA + ARA + AGB = B$,
  we get
\begin{align}
\nonumber
&
U\left[\begin{matrix}
     B_1&         B_2
\\   B_3&         B_4
\end{matrix}\right]U^\mathrm{ T }
\\
\nonumber
&\quad=
U\left[\begin{matrix}
      B_1+\left(\Sigma KR_1+ \Sigma LR_3\right) \Sigma K+B_1&         B_1K^{-1}L+\left(\Sigma KR_1+ \Sigma LR_3\right) \Sigma L+ B_2
\\                                            B_3&                               B_3K^{-1}L
\end{matrix}\right]U^\mathrm{ T }.
\end{align}
So we have $B_4=B_3K^{-1}L$.

To sum up, from Theorem \ref{The-2-40},
when the index of $A$ is one and $B_4=B_3K^{-1}L$,
the dual index of ${\M}$ is one.

\

``$\Longleftarrow$''
\
Let the dual index of ${\M}$ be one.
According to Theorem \ref{The-2-40},
 the index of $A$ is one and $B_4=B_3K^{-1}L$.
It  follows from (\ref{core-7}) that
\begin{align}
\label{core-88}
B=U\left[\begin{matrix}
     B_1&         B_2
\\   B_3&         B_3K^{-1}L
\end{matrix}\right]U^\mathrm{ T }.
\end{align}

From the above equation (\ref{core-88}) and equation (\ref{core-1}), we get
\begin{align*}
{\M}
&=A+\varepsilon B
 =U\left[\begin{matrix}
     \Sigma K&         \Sigma L
\\          0&             0
\end{matrix}\right]U^\mathrm{ T }+
\varepsilon U\left[\begin{matrix}
     B_1&         B_2
\\   B_3&         B_3K^{-1}L
\end{matrix}\right]U^\mathrm{ T }.
\end{align*}

Denote
\begin{align}
\nonumber
\widehat{G}
=
G
&
+\varepsilon R
=
 U\left[\begin{matrix}
     \left(\Sigma K\right)^{-1}&      0
\\                0&                  0
\end{matrix}\right]U^\mathrm{ T }
\\
\label{G-gongshi}
&
+
\varepsilon U\left[\begin{matrix}
     -K^{-1}LB_3\left(\Sigma K\right)^{-2}-\left(\Sigma K\right)^{-1}B_1\left(\Sigma K\right)^{-1}
     &         K^{-1}\Sigma^{-2}\left(B_3K^{-1}\right)^\mathrm{ T }
\\                                             B_3(\Sigma K)^{-2}&           0
\end{matrix}\right]U^\mathrm{ T }.
\end{align}
Then
\begin{align*}
{\M}\widehat{G}{\M}
&=AGA+\varepsilon \left(BGA+ARA+AGB\right)
\\
&=U\left[\begin{matrix}
                \Sigma K&      \Sigma L
\\                     0&         0
\end{matrix}\right]U^\mathrm{ T }
\\
&
\quad
+\varepsilon
\left(U\left[\begin{matrix}
                   B_1&      B_1\left(\Sigma K\right)^{-1}\Sigma L
\\                 B_3&      B_3\left(\Sigma K\right)^{-1}\Sigma L
\end{matrix}\right]U^\mathrm{ T }+
U\left[\begin{matrix}
                    -B_1&      -B_1\left(\Sigma K\right)^{-1}\Sigma L
\\                     0&                  0
\end{matrix}\right]U^\mathrm{ T }+
U\left[\begin{matrix}
                       B_1&      B_2
\\                       0&       0
\end{matrix}\right]U^\mathrm{ T }\right)
\\
&=U\left[\begin{matrix}
                \Sigma K&      \Sigma L
\\                     0&         0
\end{matrix}\right]U^\mathrm{ T }+\varepsilon
U\left[\begin{matrix}
                   B_1&      B_2
\\                 B_3&      B_3K^{-1}L
\end{matrix}\right]U^\mathrm{ T }
\\
&=A+\varepsilon B={\M},
\\
{\M}\widehat{G}^2
&=AG^2+\varepsilon \left(AGR+ARG+BG^2\right)
\\
&=U\left[\begin{matrix}
                \left(\Sigma K\right)^{-1}&  0
\\                     0&         0
\end{matrix}\right]U^\mathrm{ T }+\varepsilon
U\left(\left[\begin{matrix}
                   -K^{-1}LB_3\left(\Sigma K\right)^{-2}-\left(\Sigma K\right)^{-1}B_1\left(\Sigma K\right)^{-1}
                   &      K^{-1}\Sigma^{-2}(B_3K^{-1})^\mathrm{ T }
\\      0&   0
\end{matrix}\right]\right)U^\mathrm{ T }
\\
&+\varepsilon U\left(\left[\begin{matrix}
                    -B_1(\Sigma K)^{-2}&                  0
\\                                    0&                  0
\end{matrix}\right]+
\left[\begin{matrix}
             B_1(\Sigma K)^{-2}&       0
\\           B_3(\Sigma K)^{-2}&       0
\end{matrix}\right]\right)U^\mathrm{ T }
\\
&=U\left[\begin{matrix}
                (\Sigma K)^{-1}&         0
\\                            0&         0
\end{matrix}\right]U^\mathrm{ T }+\varepsilon
U\left[\begin{matrix}
              -K^{-1}LB_3(\Sigma K)^{-2}-(\Sigma K)^{-1}B_1(\Sigma K)^{-1}
              &      K^{-1}\Sigma^{-2}\left(B_3K^{-1}\right)^\mathrm{ T }
\\    B_3\left(\Sigma K\right)^{-2}&                       0
\end{matrix}\right]U^\mathrm{ T }
\\
&=G+\varepsilon R=\widehat{G}
\end{align*}
and
\begin{align*}
\left({\M}\widehat{G}\right)^\mathrm{ T }
&=\left(AG+\varepsilon (AR+BG)\right)^\mathrm{ T }
\\
&=\left(U\left[\begin{matrix}
                I_r&      0
\\                0&       0
\end{matrix}\right]U^\mathrm{ T }\right)^\mathrm{ T }+\varepsilon
\left(U\left[\begin{matrix}
                        0&                   \Sigma^{-1}\left(B_3K^{-1}\right)^\mathrm{ T }
\\     B_3\left(\Sigma K\right)^{-1}&                         0
\end{matrix}\right]U^\mathrm{ T }\right)^\mathrm{ T }
\\
&=AG+\varepsilon \left(AR+BG\right)={\M}\widehat{G}.
\end{align*}
Therefore, $\widehat{G}$ is the DCGI of ${\M}$,
 that is, $\widehat{G}={\M}^{\mbox{\tiny\textcircled{\#}}}$ from Theorem \ref{The-2-1}.
\end{proof}

\begin{theorem}
\label{core-dengjia}
Let ${\M}=A+\varepsilon B\in  {\mathbb{D}}_{n, n}$,
then the DCGI ${\M}^{\mbox{\tiny\textcircled{\#}}}$ of ${\M}$ exists
if and only if
${\rm Ind}\left(A\right)=1$
and
$\left(I-AA^{+}\right)B\left(I-AA^{\#}\right)=0$.
\end{theorem}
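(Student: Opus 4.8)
The plan is to reduce the statement to the block characterization already in hand rather than work with Definition \ref{Def-2-1} directly. By Theorem \ref{core-20}, the DCGI of $\M$ exists if and only if the dual index of $\M$ is one; and by Theorem \ref{The-2-40} the latter is equivalent to ${\rm Ind}(A)=1$ together with $B_4=B_3K^{-1}L$, where $A$ and $B$ are written in the forms (\ref{core-1}) and (\ref{core-7}). Hence it suffices to prove that, \emph{under} ${\rm Ind}(A)=1$, the stated hybrid condition $\left(I-AA^{+}\right)B\left(I-AA^{\#}\right)=0$ is equivalent to $B_4=B_3K^{-1}L$, after which the theorem follows by chaining the two equivalences.

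First I would record the two projector factors in the $U$-coordinates, both of which have already been computed earlier: from the proof of Theorem \ref{zhibiaoMP} one has $I_n-AA^{+}=U\left[\begin{smallmatrix} 0 & 0 \\ 0 & I_{n-r}\end{smallmatrix}\right]U^\mathrm{T}$, while from the proof of Theorem \ref{zhibiaoqunni} one has $I_n-AA^{\#}=U\left[\begin{smallmatrix} 0 & -K^{-1}L \\ 0 & I_{n-r}\end{smallmatrix}\right]U^\mathrm{T}$. Substituting these together with (\ref{core-7}) and cancelling the internal $U^\mathrm{T}U=I_n$ factors, the left projector annihilates the top block row of $B$, leaving the block $\left[\begin{smallmatrix} 0 & 0 \\ B_3 & B_4\end{smallmatrix}\right]$, and the right projector then kills its first block column, so the triple product collapses to a single surviving entry:
\[
\left(I_n-AA^{+}\right)B\left(I_n-AA^{\#}\right)=U\left[\begin{matrix} 0 & 0 \\ 0 & B_4-B_3K^{-1}L\end{matrix}\right]U^\mathrm{T}.
\]

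Since $U$ is unitary, this matrix vanishes precisely when $B_4-B_3K^{-1}L=0$, i.e.\ when $B_4=B_3K^{-1}L$; combining this with Theorems \ref{The-2-40} and \ref{core-20} gives the claimed equivalence. The only real computation is the $2\times2$ block multiplication, and there is no serious obstacle: the one point requiring care is the off-diagonal term $-K^{-1}L$ of $I_n-AA^{\#}$, which multiplies into the first block column of the left factor — a column that is already zero after applying $I_n-AA^{+}$ — and therefore contributes nothing. This is exactly why the asymmetric hybrid condition produces the same scalar block equation $B_4=B_3K^{-1}L$ as the symmetric conditions appearing in Theorems \ref{zhibiaoMP} and \ref{zhibiaoqunni}, so the result is consistent with those characterizations of dual index one.
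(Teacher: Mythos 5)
Your proposal is correct and follows essentially the same route as the paper: both reduce the statement to the block condition $B_4=B_3K^{-1}L$ via the decomposition (\ref{core-1}), (\ref{core-7}) and the projector formulas, compute $\left(I_n-AA^{+}\right)B\left(I_n-AA^{\#}\right)=U\left[\begin{smallmatrix}0&0\\0&B_4-B_3K^{-1}L\end{smallmatrix}\right]U^{\mathrm T}$, and then chain through Theorems \ref{The-2-40} and \ref{core-20}. The only difference is cosmetic: the paper splits the argument into two directions (substituting $B_4=B_3K^{-1}L$ first in one direction, solving for it in the other), whereas you do the single computation with general $B_4$ and read off the equivalence.
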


\begin{proof}

Suppose that $A$ is core invertible.
Let   the decomposition of   $A$ be as  in $(\ref{core-1})$,
and the form of $B$ be as in (\ref{core-7}).

``$\Longrightarrow$''
\
Let the DCGI ${\M}^{\mbox{\tiny\textcircled{\#}}}$ of ${\M}$ exist,
then we have
 the index of $A$ is one,
and $B_4=B_3K^{-1}L$ from Theorem \ref{core-20}.
Thus, we get equation (\ref{core-88}).
 Put equations (\ref{core-1}), (\ref{core-3}), (\ref{core-4}) and (\ref{core-88})
 into $(I-AA^{+})B(I-AA^{\#})$ to get
\begin{align}
\left(I-AA^{+}\right)B\left(I-AA^{\#}\right)
&=U\left[\begin{matrix}
     0&         0
\\   0&         I_{n-r}
\end{matrix}\right]
\left[\begin{matrix}
     B_1&         B_2
\\   B_3&         B_3K^{-1}L
\end{matrix}\right]
\left[\begin{matrix}
       0&       -K^{-1}L
\\     0&       I_{n-r}
\end{matrix}\right]U^\mathrm{ T }
\\
\nonumber
&=U\left[\begin{matrix}
       0&         0
\\   B_3&         B_3K^{-1}L
\end{matrix}\right]
\left[\begin{matrix}
       0&       -K^{-1}L
\\     0&       I_{n-r}
\end{matrix}\right]U^\mathrm{ T }
\\
\nonumber
&=U\left[\begin{matrix}
       0&       0
\\     0&       -B_3K^{-1}L+B_3K^{-1}L
\end{matrix}\right]U^\mathrm{ T }=0
\end{align}

``$\Longleftarrow$''
\
Let $\left(I-AA^{+}\right)B\left(I-AA^{\#}\right)=0$.
Since $A^{\#}$ exists,
 the index of $A$ is one.
Put equations (\ref{core-1}), (\ref{core-3}), (\ref{core-4}) and (\ref{core-7})
into $\left(I-AA^{+}\right)B\left(I-AA^{\#}\right)$ to get
\begin{align}
\left(I-AA^{+}\right)B\left(I-AA^{\#}\right)
&=U\left[\begin{matrix}
     0&         0
\\   0&         I_{n-r}
\end{matrix}\right]
\left[\begin{matrix}
     B_1&         B_2
\\   B_3&         B_4
\end{matrix}\right]
\left[\begin{matrix}
       0&       -K^{-1}L
\\     0&       I_{n-r}
\end{matrix}\right]U^\mathrm{ T }
\\
\nonumber
&=U\left[\begin{matrix}
       0&         0
\\   B_3&         B_4
\end{matrix}\right]
\left[\begin{matrix}
       0&       -K^{-1}L
\\     0&       I_{n-r}
\end{matrix}\right]U^\mathrm{ T }
\\
\nonumber
&=U\left[\begin{matrix}
       0&       0
\\     0&       -B_3K^{-1}L+B_4
\end{matrix}\right]U^\mathrm{ T }.
\end{align}
It follows from $\left(I-AA^{+}\right)B\left(I-AA^{\#}\right)=0$
  that $B_4=B_3K^{-1}L$. Then there is equation (\ref{G-gongshi})
 from Theorem \ref{core-20}.
 According to Definition \ref{Def-2-1}, DCGI exists.
\end{proof}

\begin{theorem}
\label{core-dengjia-2}
Let ${\M}=A+\varepsilon B\in  {\mathbb{D}}_{n, n}$,
then the DCGI ${\M}^{\mbox{\tiny\textcircled{\#}}}$ of ${\M}$ exists
if and only if
${\rm Ind}\left(A\right)=1$
and
$\left(I-AA^{\mbox{\tiny\textcircled{\#}}}\right)B\left(I-A^{\mbox{\tiny\textcircled{\#}}}A\right)=0$.
\end{theorem}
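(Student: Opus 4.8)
The plan is to reuse the Hartwig--Spindelböck decomposition exactly as in the proof of Theorem~\ref{core-dengjia}, only replacing the projectors $I-AA^{+}$ and $I-AA^{\#}$ by the ones built from the core inverse. First I would observe that both sides of the asserted equivalence force $A$ to be core invertible: in the existence direction this is part of Theorem~\ref{The-2-1} (equivalently Theorem~\ref{core-20}), while in the converse $\Ind(A)=1$ is assumed outright. Hence I may fix the form $(\ref{core-1})$ for $A$ and the conformal partition $(\ref{core-7})$ for $B$, with $B_1$ of size $r\times r$ where $r=\rk(A)$.

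Second, I would compute the two one-sided projectors in the $U$-coordinates using $(\ref{core-6})$ and $(\ref{core-1})$. A short multiplication gives
\begin{align*}
I_n-AA^{\mbox{\tiny\textcircled{\#}}}=U\left[\begin{matrix}0&0\\0&I_{n-r}\end{matrix}\right]U^{\mathrm{T}},\qquad
I_n-A^{\mbox{\tiny\textcircled{\#}}}A=U\left[\begin{matrix}0&-K^{-1}L\\0&I_{n-r}\end{matrix}\right]U^{\mathrm{T}}.
\end{align*}
In fact these coincide with $I-AA^{+}$ and $I-AA^{\#}$ respectively, since $AA^{\mbox{\tiny\textcircled{\#}}}=AA^{+}$ and $A^{\mbox{\tiny\textcircled{\#}}}A=AA^{\#}$ follow from $A^{\mbox{\tiny\textcircled{\#}}}=A^{\#}AA^{+}$ together with $AA^{\#}A=A$; so the whole statement could alternatively be deduced in one line from Theorem~\ref{core-dengjia}. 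I would nonetheless carry out the block computation explicitly to keep the argument self-contained and in the style of the preceding theorems.

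Third, substituting these projectors together with $(\ref{core-7})$ into the triple product and multiplying out, every block cancels except the lower-right corner, leaving
\begin{align*}
\left(I_n-AA^{\mbox{\tiny\textcircled{\#}}}\right)B\left(I_n-A^{\mbox{\tiny\textcircled{\#}}}A\right)
=U\left[\begin{matrix}0&0\\0&B_4-B_3K^{-1}L\end{matrix}\right]U^{\mathrm{T}}.
\end{align*}
As $U$ is unitary, this vanishes if and only if $B_4=B_3K^{-1}L$.

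Finally I would close the loop with the earlier results, reading the chain in both directions. By Theorem~\ref{The-2-40}, under $\Ind(A)=1$ the identity $B_4=B_3K^{-1}L$ is precisely the condition that the dual index of $\M$ be one, and by Theorem~\ref{core-20} the latter is equivalent to the existence of $\M^{\mbox{\tiny\textcircled{\#}}}$. Thus existence of the DCGI yields $\Ind(A)=1$ and $B_4=B_3K^{-1}L$, hence the stated product vanishes; conversely $\Ind(A)=1$ with vanishing product gives $B_4=B_3K^{-1}L$ and therefore the DCGI. I do not anticipate any genuine obstacle: the argument is a single block computation, and the only point requiring care is to verify that the core-inverse projectors reduce the triple product to exactly the same governing block $B_4-B_3K^{-1}L$ that controlled Theorems~\ref{The-2-40} and~\ref{core-dengjia}, so that the existence criterion of Theorem~\ref{core-20} applies verbatim.
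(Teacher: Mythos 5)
Your proposal is correct, but your main line of argument is not the one the paper takes. The paper's entire proof of Theorem~\ref{core-dengjia-2} is the ``one line'' you mention only in passing: since $A^{\mbox{\tiny\textcircled{\#}}}=A^{\#}AA^{+}$ gives $AA^{\mbox{\tiny\textcircled{\#}}}=AA^{+}$ and $A^{\mbox{\tiny\textcircled{\#}}}A=AA^{\#}$, the condition $\left(I-AA^{\mbox{\tiny\textcircled{\#}}}\right)B\left(I-A^{\mbox{\tiny\textcircled{\#}}}A\right)=0$ is literally the same condition as in Theorem~\ref{core-dengjia}, and the statement follows with no further work. You instead recompute the projectors in the Hartwig--Spindelb\"{o}ck coordinates of $(\ref{core-1})$ and $(\ref{core-7})$, reduce the triple product to the block $B_4-B_3K^{-1}L$, and then invoke Theorems~\ref{The-2-40} and~\ref{core-20}; all of these steps check out (the blocks you display for $I_n-AA^{\mbox{\tiny\textcircled{\#}}}$ and $I_n-A^{\mbox{\tiny\textcircled{\#}}}A$ are right, as is the handling of $\Ind(A)=1$ in the forward direction via Theorem~\ref{The-2-1}). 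What the paper's route buys is brevity and independence from the decomposition; what yours buys is a self-contained verification in the same style as Theorems~\ref{zhibiaoMP}, \ref{zhibiaoqunni} and \ref{core-dengjia}, at the cost of repeating a computation that Theorem~\ref{core-dengjia} has already done. Either is acceptable, but given that you yourself observe the projector identities, the block computation is redundant and you would do better to lead with the reduction.
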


\begin{proof}

In real field, it is known that
$A^{\#}A=AA^{\#}=A^{\mbox{\tiny\textcircled{\#}}}A$,
 and
$AA^{+}=AA^{\mbox{\tiny\textcircled{\#}}}$.
According to Theorem \ref{core-dengjia},
the DCGI ${\M}^{\mbox{\tiny\textcircled{\#}}}$ of ${\M}$ exists
if and only if
${\rm Ind}\left(A\right)=1$
and
$\left(I-AA^{\mbox{\tiny\textcircled{\#}}}\right) B \left(I-A^{\mbox{\tiny\textcircled{\#}}}A\right)=0$.
\end{proof}

  Next, we further discuss  characterizations of the existence of DCGI.

\begin{theorem}
\label{TLGUANXI}
Let ${\M}=A+\varepsilon B\in{\mathbb{D}}_{n, n}$, then the following conditions are equivalent:
\begin{description}
  \item[(1)]
  The  DCGI of ${\M}$ exists;

  \item[(2)]
  The index of $A$ is equal to 1, and
$BA^{\#}A+AXA+AA^{\#}B=B$ is consistent;

  \item[(3)]
  The index of $A$ is equal to 1, and
$BA^+A+AXA+AA^+B=B$ is consistent;

  \item[(4)]
  The index of $A$ is equal to 1, and
$BA^{\mbox{\tiny\textcircled{\#}}}A+AXA+AA^{\mbox{\tiny\textcircled{\#}}}B=B$ is consistent.
\end{description}
\end{theorem}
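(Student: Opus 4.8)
The plan is to reduce everything to the single existence criterion for the DCGI already obtained, and to exploit that each equation in (2)--(4) is built from the coefficient list of one \emph{particular} generalized inverse of $A$. First I would observe that all four statements presuppose $\Ind(A)=1$: condition (1) forces it through Theorem~\ref{The-2-1} (equivalently Theorem~\ref{core-20}), while (2), (3) and (4) demand it explicitly. Hence if $\Ind(A)\neq1$ all four statements fail together and there is nothing to prove; so I would assume $\Ind(A)=1$ from the outset. Under this assumption $A^{\#}$, $A^{+}$ and $A^{\mbox{\tiny\textcircled{\#}}}$ are all $\{1\}$-inverses of $A$ (for the core inverse this is its defining relation $AXA=A$).

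The key tool is the classical solvability criterion: for a fixed $\{1\}$-inverse $A^{(1)}$ of $A$, the equation $AXA=C$ is consistent if and only if $AA^{(1)}CA^{(1)}A=C$. For each of (2)--(4) I would feed in the \emph{matching} generalized inverse, i.e. test equation (2) with $A^{\#}$, equation (3) with $A^{+}$, and equation (4) with $A^{\mbox{\tiny\textcircled{\#}}}$. Writing the right-hand side of, say, (2) as $C=B-BA^{\#}A-AA^{\#}B$ and using $AA^{\#}A=A$ together with the idempotency of $AA^{\#}$ and $A^{\#}A$, the two occurrences of $AA^{\#}B$ cancel, the test reduces to $C=-AA^{\#}BA^{\#}A$, and this collapses to $(I-AA^{\#})B(I-A^{\#}A)=0$; the same routine computation gives $(I-AA^{+})B(I-A^{+}A)=0$ for (3) and $(I-AA^{\mbox{\tiny\textcircled{\#}}})B(I-A^{\mbox{\tiny\textcircled{\#}}}A)=0$ for (4).

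It then remains only to recognise each of these conditions. For (2), since $A^{\#}A=AA^{\#}$, it reads $(I-AA^{\#})B(I-AA^{\#})=0$, which by Theorem~\ref{zhibiaoqunni} combined with Theorem~\ref{core-20} is exactly the existence of the DCGI. For (3) the condition $(I-AA^{+})B(I-A^{+}A)=0$ is, by Lemma~\ref{lemma1}, the existence of the DMPGI $\M^{+}$; by Theorem~\ref{zhibiaoMPni} this together with $\Ind(A)=1$ means the dual index is one, hence by Theorem~\ref{core-20} the DCGI exists. For (4) the condition is verbatim the criterion of Theorem~\ref{core-dengjia-2}. Chaining these equivalences yields (1)$\Leftrightarrow$(2)$\Leftrightarrow$(3)$\Leftrightarrow$(4).

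The only step demanding care is the choice of $\{1\}$-inverse in the solvability test: it must be the one whose coefficients already appear in the equation. A mismatched choice---for instance testing (2) with $A^{+}$---leaves a residual term of the form $(AA^{+}-AA^{\#})B(I-A^{+}A)$ that does not obviously vanish and would have to be eliminated by a separate block-matrix argument via the decomposition $(\ref{core-1})$; the matched choice avoids this entirely, so that is the point I would be most careful to get right.
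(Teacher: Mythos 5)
Your proposal is correct, and it takes a genuinely different route from the paper. The paper proves (1)$\Leftrightarrow$(2) in two asymmetric halves: for (1)$\Rightarrow$(2) it exhibits an explicit solution of $BA^{\#}A+AXA+AA^{\#}B=B$, namely the dual part $P$ of ${\M}^{\#}$, extracted from ${\M}{\M}^{\#}{\M}={\M}$; for (2)$\Rightarrow$(1) it substitutes the decompositions $(\ref{core-1})$, $(\ref{core-4})$, $(\ref{core-7})$ into the equation, reads off $B_4=B_3K^{-1}L$ from the $(2,2)$ block, and invokes Theorems \ref{The-2-40} and \ref{core-20}; the cases (3) and (4) are then dismissed with ``similarly.'' You instead apply the classical Penrose solvability criterion ($AXA=C$ is consistent iff $AA^{(1)}CA^{(1)}A=C$) with the matched $\{1\}$-inverse in each case, which collapses each consistency statement to a single identity --- $(I-AA^{\#})B(I-A^{\#}A)=0$, $(I-AA^{+})B(I-A^{+}A)=0$, $(I-AA^{\mbox{\tiny\textcircled{\#}}})B(I-A^{\mbox{\tiny\textcircled{\#}}}A)=0$ --- and each of these is verbatim one of the paper's earlier characterizations (Theorem \ref{zhibiaoqunni} with Theorem \ref{core-20}, Theorem \ref{zhibiaoMP} or Lemma \ref{lemma1} with Theorem \ref{zhibiaoMPni}, and Theorem \ref{core-dengjia-2}, respectively). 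Your computations check out: e.g.\ for (2), $AA^{\#}CA^{\#}A=-AA^{\#}BA^{\#}A$ with $C=B-BA^{\#}A-AA^{\#}B$, and the test $C=AA^{\#}CA^{\#}A$ rearranges exactly to $(I-AA^{\#})B(I-A^{\#}A)=0$; for (4) you correctly need that $A^{\mbox{\tiny\textcircled{\#}}}$ is both an inner and an outer inverse of $A$, which follows from $AXA=A$ and $AX^2=X$ (or from $(\ref{core-1})$ and $(\ref{core-6})$). What your approach buys is uniformity --- all three equations are handled by one mechanism, so nothing is left to ``similarly'' --- and it avoids any further block computation; its only cost is that the solvability criterion is not stated in the paper and would need a one-line justification (if $AX_0A=C$ then $AA^{(1)}CA^{(1)}A=AX_0A=C$; conversely $X=A^{(1)}CA^{(1)}$ is a solution). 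Your closing remark about matched versus mismatched inverses is a point of presentation rather than correctness, since consistency of $AXA=C$ does not depend on which $\{1\}$-inverse is used in the test, but the matched choice is indeed what makes the reduction a two-line calculation.
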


\begin{proof}

From Theorem \ref{zhibiaoqunni-xin}, Theorem \ref{zhibiaoMPni} and Theorem \ref{core-20},
we know that the existence of DCGI is equivalent to the existence of DGGI;
the existence of DGGI is equivalent to
the existence of DMPGI with ${\rm Ind}(A)=1$.
Therefore, condition (1) indicates that
DCGI exists
or
DGGI exists
or
DMPGI exists with ${\rm Ind}(A)=1$.

``$(1)\Longrightarrow(2)$''
\
Let DCGI exist, then ${\M}^{\#}=A^{\#}+\varepsilon P$ of ${\M}=A+\varepsilon B$ exist,
so the index of $A$ is one and ${\M}{\M}^{\#}{\M}={\M}$,
that is,
$BA^{\#}A+APA+AA^{\#}B=B$ .
Therefore, $BA^{\#}A+AXA+AA^{\#}B=B$ is consistent.

``$(1)\Longleftarrow(2)$''
\
Let the index of $A$ be one and $BA^{\#}A+AXA+AA^{\#}B=B$ be consistent.
 By applying   (\ref{core-1}),
 (\ref{core-2}), (\ref{core-4}) and (\ref{core-7})
 to $BA^{\#}A+AXA+AA^{\#}B=B$, we get
 \begin{align}
 \label{mafan}
 &U\left[\begin{matrix}
     B_1&         B_2
\\   B_3&         B_4
\end{matrix}\right]
\left[\begin{matrix}
      K^{-1}\Sigma^{-1}&     K^{-1}\Sigma^{-1}K^{-1}L
\\                    0&         0
\end{matrix}\right]
\left[\begin{matrix}
     \Sigma K&         \Sigma L
\\        0&         0
\end{matrix}\right]U^\mathrm{ T }
\\
\nonumber
&+U\left[\begin{matrix}
     \Sigma K&         \Sigma L
\\        0&         0
\end{matrix}\right]
\left[\begin{matrix}
     X_1&         X_2
\\   X_3&         X_4
\end{matrix}\right]
\left[\begin{matrix}
     \Sigma K&         \Sigma L
\\        0&         0
\end{matrix}\right]U^\mathrm{ T }
\\
\nonumber
&+U\left[\begin{matrix}
     \Sigma K&         \Sigma L
\\        0&         0
\end{matrix}\right]
\left[\begin{matrix}
      K^{-1}\Sigma^{-1}&     K^{-1}\Sigma^{-1}K^{-1}L
\\                    0&         0
\end{matrix}\right]
\left[\begin{matrix}
     B_1&         B_2
\\   B_3&         B_4
\end{matrix}\right]U^\mathrm{ T }=
U\left[\begin{matrix}
     B_1&         B_2
\\   B_3&         B_4
\end{matrix}\right]U^\mathrm{ T },
 \end{align}
 where
$X= U\left[\begin{matrix}
     X_1&         X_2
\\   X_3&         X_4
\end{matrix}\right]U^\mathrm{ T }$,
 $B_1$ is an $r$-square matrix, and $r=\rk(A)$.

By simplifying equation (\ref{mafan}), we get
\begin{align}U\left[\begin{matrix}
     B_1&         B_2
\\   B_3&         B_4
\end{matrix}\right]U^\mathrm{ T }
\nonumber
&=
U\left[\begin{matrix}
     B_1+\Sigma KX_{1}\Sigma K+\Sigma LX_{3}\Sigma K+B_{1}+K^{-1}LB_{3}
\\   B_3
\end{matrix}\right.
\\
&
\qquad\qquad\qquad
 \left.\begin{matrix}
           B_1K^{-1}L+\Sigma KX_{1}\Sigma L+\Sigma LX_{3}\Sigma L+B_{2}+K^{-1}LB_{4}
\\         B_3K^{-1}L
\end{matrix}\right]U^\mathrm{ T }
\nonumber,
\end{align}
Therefore, $B_4=B_3K^{-1}L$.

 To sum up, if ${\rm Ind}\left(A\right)=1$ and $B_4=B_3K^{-1}L$,
 then the dual index of ${\M}$ is one from Theorem \ref{The-2-40}.
  And because of  Theorem \ref{core-20}, we know the  DCGI of ${\M}$ exists.

Similarly, (1) and (3) are equivalent;
 (1) and (4) are equivalent.
\end{proof}

\subsection{Compact formula for DCGI}

The following is a compact formula for DCGI.

\begin{theorem}
\label{c-jinhua}
Let ${\M}=A+\varepsilon B\in{\mathbb{D}}_{n, n}$ and the DCGI of ${\M}$ exist, then
\begin{align}
\label{jinjinhua}
{\M}^{\mbox{\tiny\textcircled{\#}}}
&={\M}^{\#}{\M}{\M}^{+}
\\
\nonumber
&=
A^{\mbox{\tiny\textcircled{\#}}}+\varepsilon\left(-A^{\mbox{\tiny\textcircled{\#}}}BA^{+}
+A^{\#}BA^{+}-A^{\#}BA^{\mbox{\tiny\textcircled{\#}}}
+A^{\mbox{\tiny\textcircled{\#}}}(BA^{+})^\mathrm{ T }(I_n-AA^{+})\right.
\\
\label{changjinhua}
&
\qquad\qquad\quad
+\left.(I_n-AA^{\#})BA^{\#}A^{\mbox{\tiny\textcircled{\#}}}\right).
\end{align}
\end{theorem}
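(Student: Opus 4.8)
=== PROOF PROPOSAL ===

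The plan is to establish the compact formula $\M^{\mbox{\tiny\textcircled{\#}}} = \M^{\#}\M\M^{+}$ by expanding the right-hand side as a dual number and verifying that the resulting dual matrix satisfies the three defining conditions of Definition~\ref{Def-2-1}, appealing to uniqueness (Theorem~\ref{dingli3.2}) to conclude it equals the DCGI. Since the DCGI exists by hypothesis, Theorem~\ref{core-20} guarantees the dual index of $\M$ is one, and hence by Theorem~\ref{zhibiaoqunni-xin} and Theorem~\ref{zhibiaoMPni} both $\M^{\#}$ and $\M^{+}$ exist. Therefore the product $\M^{\#}\M\M^{+}$ is a well-defined dual matrix, and this is the central observation making the formula meaningful.

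First I would write $\M^{\#} = A^{\#} + \varepsilon R$ using the explicit dual part $R$ from Lemma~\ref{core-yingli} (equation~(\ref{The-group})), and $\M^{+} = A^{+} + \varepsilon S$ using the dual part $S$ read off from Lemma~\ref{lemma1} (equation~(\ref{The-DMPGI})), namely $S = -\left(A^{+}BA^{+}-(A^\mathrm{T}A)^{+}B^\mathrm{T}(I_n-AA^{+})-(I_n-A^{+}A)B^\mathrm{T}(AA^\mathrm{T})^{+}\right)$. Then I would compute the product $(A^{\#}+\varepsilon R)(A+\varepsilon B)(A^{+}+\varepsilon S)$ in the dual arithmetic, collecting the real part as $A^{\#}AA^{+} = A^{\mbox{\tiny\textcircled{\#}}}$ (by~(\ref{core-5})) and the dual part as $R A A^{+} + A^{\#} B A^{+} + A^{\#} A S$. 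The bulk of the work is to simplify this dual part, substituting the expressions for $R$ and $S$, and to reconcile it with the claimed formula~(\ref{changjinhua}). I would use the standard real-matrix identities $A^{\#}AA^{+}=A^{\mbox{\tiny\textcircled{\#}}}$, $AA^{\#}=A^{\#}A$, $A^{\mbox{\tiny\textcircled{\#}}}A=AA^{\#}$, $AA^{\mbox{\tiny\textcircled{\#}}}=AA^{+}$, together with $A^{\#}AA^{+}=A^{\mbox{\tiny\textcircled{\#}}}$, to collapse the many terms.

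The main obstacle I anticipate is the algebraic simplification of the dual part: the raw product produces terms involving $(A^\mathrm{T}A)^{+}$ and $(AA^\mathrm{T})^{+}$ coming from $S$, whereas the target formula~(\ref{changjinhua}) is stated purely in terms of $A^{\#}$, $A^{+}$, $A^{\mbox{\tiny\textcircled{\#}}}$ and transposes. Bridging this gap requires the projector identities $A^{\#}A(A^\mathrm{T}A)^{+}B^\mathrm{T}(I_n-AA^{+})$-type reductions, and in particular the fact that the index-one hypothesis forces certain block entries to vanish. To handle this cleanly I would instead verify the formula via the decomposition~(\ref{core-1}) and~(\ref{core-7}): substitute the block forms of $A^{\#}$, $A^{+}$, $A^{\mbox{\tiny\textcircled{\#}}}$ from~(\ref{core-3}),~(\ref{core-6}),~(\ref{core-4}), use the existence condition $B_4=B_3K^{-1}L$ from Theorem~\ref{The-2-40}, and check that $\M^{\#}\M\M^{+}$ reproduces exactly the dual part $R$ displayed in~(\ref{G-gongshi}) in the proof of Theorem~\ref{core-20}. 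This block computation is mechanical but transparent, and matching against the already-verified $\widehat{G}$ of~(\ref{G-gongshi}) lets me invoke uniqueness to finish. The equality of~(\ref{jinjinhua}) with the expanded form~(\ref{changjinhua}) then follows by re-expressing the block result back in coordinate-free notation.
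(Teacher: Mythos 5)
Your proposal is correct and lands on the same overall strategy as the paper: show that $\widehat{X}={\M}^{\#}{\M}{\M}^{+}$ is a DCGI of ${\M}$, invoke uniqueness (Theorem~\ref{dingli3.2}), and then obtain (\ref{changjinhua}) by substituting the explicit formulas (\ref{The-DMPGI}) and (\ref{qunni-gongshi}). The one place where you diverge is the verification step, and here the paper's route is markedly lighter than either of your two options. You propose to expand ${\M}^{\#}$ and ${\M}^{+}$ into real and dual parts (or, as a fallback, to push everything through the block decomposition (\ref{core-1}), (\ref{core-7}) and match against (\ref{G-gongshi})), whereas the paper checks the three axioms of Definition~\ref{Def-2-1} purely at the dual-matrix level using only the defining identities ${\M}{\M}^{\#}{\M}={\M}$, ${\M}^{\#}{\M}={\M}{\M}^{\#}$, ${\M}{\M}^{+}{\M}={\M}$ and $\left({\M}{\M}^{+}\right)^{\mathrm{T}}={\M}{\M}^{+}$; for instance ${\M}\widehat{X}^{2}={\M}{\M}^{\#}{\M}{\M}^{+}{\M}^{\#}{\M}{\M}^{+}={\M}{\M}^{+}{\M}{\M}^{\#}{\M}^{+}={\M}^{\#}{\M}{\M}^{+}=\widehat{X}$. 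No block condition $B_4=B_3K^{-1}L$ and no projector gymnastics are needed for that part; the component-level work is confined to the final substitution producing (\ref{changjinhua}). Your anticipated obstacle about the $(A^{\mathrm{T}}A)^{+}$ and $(AA^{\mathrm{T}})^{+}$ terms is real but resolves cleanly: in the dual part $RAA^{+}+A^{\#}BA^{+}+A^{\#}AS$ one has $A^{\#}A\left(A^{\mathrm{T}}A\right)^{+}=A^{\mbox{\tiny\textcircled{\#}}}\left(A^{+}\right)^{\mathrm{T}}$, which yields the fourth term of (\ref{changjinhua}), while $A^{\#}A\left(I_n-A^{+}A\right)=0$ kills the remaining awkward term, so your primary plan does go through. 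Both arguments are valid; the paper's abstract verification buys brevity, your block computation buys an independent cross-check against (\ref{G-gongshi}) (essentially re-deriving Theorem~\ref{20220528-1} along the way).
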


\begin{proof}
According to  Theorem \ref{zhibiaoqunni-xin}, Theorem \ref{zhibiaoMPni} and Theorem \ref{core-20},
if ${\M}^{\mbox{\tiny\textcircled{\#}}}$ exists, then ${\M}^{\#}$ and ${\M}^{+}$ exist.
 Write
 $\widehat{X}={\M}^{\#}{\M}{\M}^{+}$.
 It is easy to check that
\begin{align}
{\M}\widehat{X}{\M}&=  {\M}{\M}^{\#}{\M} {\M}^{+}{\M}={\M}{\M}^{+}{\M}={\M},
\\
{\M}\widehat{X}^2&= {\M}{\M}^{\#}{\M} {\M}^{+} {\M}^{\#}{\M} {\M}^{+}=
 {\M}{\M}^{+}{\M} {\M}^{\#}{\M}^{+}
= {\M}{\M}^{\#} {\M}^{+}={\M}^{\#}{\M}{\M}^{+}=\widehat{X}.
\end{align}
Since
$\left({\M}\widehat{X}\right)^\mathrm{ T }=\left( {\M}{\M}^{\#}{\M} {\M}^{+}\right)^\mathrm{ T }=
\left({\M}{\M}^{+}\right)^\mathrm{ T }={\M}{\M}^{+}$
and
${\M}\widehat{X}= {\M}{\M}^{\#}{\M} {\M}^{+}={\M}{\M}^{+}$,
we get
\begin{align}
\left({\M}\widehat{X}\right)^\mathrm{ T }={\M}\widehat{X}.
\end{align}

To sum up, we get
${\M}^{\mbox{\tiny\textcircled{\#}}}=\widehat{X}={\M}^{\#}{\M}{\M}^{+}$.

Substituting  (\ref{The-DMPGI})
and  (\ref{qunni-gongshi}) into (\ref{jinjinhua})
gives (\ref{changjinhua}).
\end{proof}

In addition,
substituting   (\ref{core-1}), (\ref{core-2}),
  (\ref{core-3}),  (\ref{core-6}), (\ref{core-4})
  and (\ref{core-88})   into (\ref{changjinhua}),
we get the following Theorem \ref{20220528-1}.

\begin{theorem}
\label{20220528-1}
Let the DCGI of ${\M}=A+\varepsilon B\in  {\mathbb{D}}_{n, n}$ exist;
$A$ and  $B$ be as forms in $(\ref{core-1})$ and $(\ref{core-7})$, respectively.
Then
\begin{align}
\nonumber
{\M}^{\mbox{\tiny\textcircled{\#}}}
=
&
U\left[\begin{matrix}
     \left(\Sigma K\right)^{-1}&      0
\\                0&       0
\end{matrix}\right]U^\mathrm{ T }
\\
\label{xiaofenjie}
&
\quad +
\varepsilon U\left[\begin{matrix}
     -K^{-1}LB_3\left(\Sigma K\right)^{-2}-\left(\Sigma K\right)^{-1}B_1\left(\Sigma K\right)^{-1}
     &         K^{-1}\Sigma^{-2}\left(B_3K^{-1}\right)^\mathrm{ T }
\\                                             B_3\left(\Sigma K\right)^{-2}&                              0
\end{matrix}\right]U^\mathrm{ T }.
\end{align}
\end{theorem}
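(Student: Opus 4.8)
The plan is to take the compact formula (\ref{changjinhua}) established in Theorem \ref{c-jinhua} as the starting point and substitute the block decompositions directly into it, simplifying each resulting block with the orthogonality relation (\ref{core-2}). Concretely, I would write $A$, $A^{+}$, $A^{\#}$ and $A^{\mbox{\tiny\textcircled{\#}}}$ in the forms (\ref{core-1}), (\ref{core-3}), (\ref{core-4}) and (\ref{core-6}), and write $B$ in the form (\ref{core-88}), which is legitimate here because the DCGI exists and hence $B_4=B_3K^{-1}L$. I would also record the two auxiliary products $AA^{+}=U\,{\rm diag}(I_r,0)\,U^\mathrm{T}$ and $I_n-AA^{\#}=U\left[\begin{smallmatrix}0 & -K^{-1}L\\ 0 & I_{n-r}\end{smallmatrix}\right]U^\mathrm{T}$. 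Since $U$ is orthogonal, every product conjugates cleanly through $U$, so the entire computation reduces to $2\times2$ block arithmetic in the $U$-basis.

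Next I would evaluate the five terms of the dual part of (\ref{changjinhua}) one at a time. Because $A^{\mbox{\tiny\textcircled{\#}}}$ and $A^{\#}$ have zero second block-row, the first four terms $-A^{\mbox{\tiny\textcircled{\#}}}BA^{+}$, $A^{\#}BA^{+}$, $-A^{\#}BA^{\mbox{\tiny\textcircled{\#}}}$ and $A^{\mbox{\tiny\textcircled{\#}}}(BA^{+})^\mathrm{T}(I_n-AA^{+})$ contribute only to the top block-row, while the last term $(I_n-AA^{\#})BA^{\#}A^{\mbox{\tiny\textcircled{\#}}}$ is the only one producing a nonzero bottom-left block. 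A short computation gives $A^{\#}A^{\mbox{\tiny\textcircled{\#}}}=U\,{\rm diag}((\Sigma K)^{-2},0)\,U^\mathrm{T}$, from which the last term immediately yields the bottom-left entry $B_3(\Sigma K)^{-2}$ together with its companion contribution $-K^{-1}LB_3(\Sigma K)^{-2}$ to the top-left block, in agreement with (\ref{xiaofenjie}); the bottom-right block is visibly zero.

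The top-left block is where the real work lies: the three terms $-A^{\mbox{\tiny\textcircled{\#}}}BA^{+}$, $A^{\#}BA^{+}$ and $-A^{\#}BA^{\mbox{\tiny\textcircled{\#}}}$ each contribute, and they must collapse to $-(\Sigma K)^{-1}B_1(\Sigma K)^{-1}$. The main obstacle is organizing this cancellation cleanly: after factoring $K^{-1}\Sigma^{-1}$ out on the left and a suitable $\Sigma^{-1}$ on the right, the surviving $B_2$- and $B_3$-dependent cross terms carry a factor $K^\mathrm{T}+K^{-1}LL^\mathrm{T}$, which the relation (\ref{core-2}) collapses to $K^{-1}$; this is precisely what makes those pieces annihilate one another and leaves only the $B_1$ term. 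The top-right block comes solely from the transpose term $A^{\mbox{\tiny\textcircled{\#}}}(BA^{+})^\mathrm{T}(I_n-AA^{+})$, and here (\ref{core-2}) in the form $K+LL^\mathrm{T}K^{-\mathrm{T}}=K^{-\mathrm{T}}$ reduces it to $K^{-1}\Sigma^{-2}(B_3K^{-1})^\mathrm{T}$. Assembling the four blocks reproduces (\ref{xiaofenjie}).

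As a sanity check, and in fact as a shorter alternative argument, I note that the matrix on the right of (\ref{xiaofenjie}) is literally the expression $\widehat{G}=G+\varepsilon R$ that was constructed and verified to satisfy Definition \ref{Def-2-1} inside the proof of Theorem \ref{core-20}, namely (\ref{G-gongshi}). Since Theorem \ref{dingli3.2} guarantees that the DCGI is unique when it exists, this already identifies (\ref{xiaofenjie}) as ${\M}^{\mbox{\tiny\textcircled{\#}}}$. The substitution route above is nevertheless worth carrying out, as it confirms the consistency of the compact formula (\ref{changjinhua}) with the explicit block representation.
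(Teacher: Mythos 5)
Your proposal is correct and follows essentially the same route as the paper, which likewise obtains (\ref{xiaofenjie}) by substituting (\ref{core-1}), (\ref{core-2}), (\ref{core-3}), (\ref{core-4}), (\ref{core-6}) and (\ref{core-88}) into the compact formula (\ref{changjinhua}) and simplifying with $KK^\mathrm{T}+LL^\mathrm{T}=I_r$. Your closing observation---that (\ref{xiaofenjie}) coincides with the matrix $\widehat{G}$ in (\ref{G-gongshi}) already verified in Theorem \ref{core-20}, so uniqueness (Theorem \ref{dingli3.2}) gives the result at once---is a valid and slightly cleaner shortcut, but it does not change the substance.
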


\subsection{Relationships among some dual generalized inverses}

There is a very interesting inverse,
the Moore-Penrose dual generalized inverse (MPDGI):
 \begin{align}
 \label{P}
 {\M}^{\PP}=A^{+}-\varepsilon A^{+}BA^{+}
 \end{align}
The inverse is useful for solving different kinematic problems \cite{d1g}.
Obviously,
when the real part of a dual matrix is a nonsingular matrix,
its MPDGI is equal to its DMPGI.
Similarly,
we consider
  laws and properties
of DCGI in the form of
${\M}^{\mbox{\tiny\textcircled{\#}}}
=A^{\mbox{\tiny\textcircled{\#}}}-\varepsilon {A^{\mbox{\tiny\textcircled{\#}}}BA^{\mbox{\tiny\textcircled{\#}}}}$ and
DGGI in the form of
 ${\M}^{\#}=A^{\#}-\varepsilon A^{\#}BA^{\#}$,
and
 relationships among those dual generalized inverses.

\begin{theorem}
\label{suibian}
 Let the DCGI ${\M}^{\mbox{\tiny\textcircled{\#}}}$ of  ${\M}=A+\varepsilon B\in{\mathbb{D}}_{n, n}$   exist,
 where $\rk\left(A\right)=r$.
Then
 ${\M}^{\mbox{\tiny\textcircled{\#}}}
 =A^{\mbox{\tiny\textcircled{\#}}}-\varepsilon {A^{\mbox{\tiny\textcircled{\#}}}BA^{\mbox{\tiny\textcircled{\#}}}}$,
 which is equivalent to
${\rm Ind}\left(A\right)=1$ and
\begin{align}
\label{core-jianhuan}
\left(I_n-AA^{+}\right)B=0.
\end{align}
\end{theorem}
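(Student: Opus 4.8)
The plan is to work entirely in the coordinates of the decomposition $(\ref{core-1})$ together with the block form $(\ref{core-7})$ of $B$, where every object attached to the DCGI is already explicit. Since the DCGI of ${\M}$ is assumed to exist, Theorem~\ref{core-20} gives that the dual index of ${\M}$ is one, whence by Theorem~\ref{The-2-40} we have ${\rm Ind}(A)=1$ together with the compatibility relation $B_4=B_3K^{-1}L$; in particular the closed form $(\ref{xiaofenjie})$ of Theorem~\ref{20220528-1} is available. Because ${\rm Ind}(A)=1$ is automatic under the hypothesis, the whole statement reduces to proving that the formula ${\M}^{\mbox{\tiny\textcircled{\#}}}=A^{\mbox{\tiny\textcircled{\#}}}-\varepsilon A^{\mbox{\tiny\textcircled{\#}}}BA^{\mbox{\tiny\textcircled{\#}}}$ is equivalent to $(\ref{core-jianhuan})$.

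First I would compute the candidate right-hand side in block form. Using $A^{\mbox{\tiny\textcircled{\#}}}=U\left[\begin{smallmatrix}(\Sigma K)^{-1}&0\\0&0\end{smallmatrix}\right]U^{\mathrm{T}}$ from $(\ref{core-6})$ and $(\ref{core-7})$, a direct multiplication yields $A^{\mbox{\tiny\textcircled{\#}}}BA^{\mbox{\tiny\textcircled{\#}}}=U\left[\begin{smallmatrix}(\Sigma K)^{-1}B_1(\Sigma K)^{-1}&0\\0&0\end{smallmatrix}\right]U^{\mathrm{T}}$, so the dual part of $A^{\mbox{\tiny\textcircled{\#}}}-\varepsilon A^{\mbox{\tiny\textcircled{\#}}}BA^{\mbox{\tiny\textcircled{\#}}}$ is supported only in its $(1,1)$ block. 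Comparing this with the dual part displayed in $(\ref{xiaofenjie})$ block by block, the $(2,1)$ block forces $B_3(\Sigma K)^{-2}=0$ and the $(1,2)$ block forces $K^{-1}\Sigma^{-2}(B_3K^{-1})^{\mathrm{T}}=0$; since $\Sigma$ and $K$ are nonsingular (the latter by core invertibility, Theorem~\ref{The-2-1}), each of these is equivalent to $B_3=0$, and once $B_3=0$ the remaining extra term $-K^{-1}LB_3(\Sigma K)^{-2}$ in the $(1,1)$ block vanishes as well. Hence the formula holds if and only if $B_3=0$.

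Next I would translate $B_3=0$ into the stated condition $(\ref{core-jianhuan})$. With $I_n-AA^{+}=U\left[\begin{smallmatrix}0&0\\0&I_{n-r}\end{smallmatrix}\right]U^{\mathrm{T}}$, computed exactly as in the proof of Theorem~\ref{zhibiaoMP}, one obtains $(I_n-AA^{+})B=U\left[\begin{smallmatrix}0&0\\B_3&B_4\end{smallmatrix}\right]U^{\mathrm{T}}$, so $(\ref{core-jianhuan})$ is equivalent to having $B_3=0$ and $B_4=0$ simultaneously. The point that closes the argument is the compatibility relation $B_4=B_3K^{-1}L$ already guaranteed by the existence of the DCGI: under it, $B_3=0$ forces $B_4=0$, so the pair of conditions collapses to the single condition $B_3=0$. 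Chaining the two equivalences gives ${\M}^{\mbox{\tiny\textcircled{\#}}}=A^{\mbox{\tiny\textcircled{\#}}}-\varepsilon A^{\mbox{\tiny\textcircled{\#}}}BA^{\mbox{\tiny\textcircled{\#}}}\iff B_3=0\iff(\ref{core-jianhuan})$, which is the assertion.

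The computations are routine block-matrix algebra; the only genuinely delicate point is the bookkeeping in the last step. One must notice that $(\ref{core-jianhuan})$ literally demands both $B_3=0$ and $B_4=0$, and that it is precisely the DCGI-existence constraint $B_4=B_3K^{-1}L$ that licenses discarding the $B_4$ half. Without invoking that constraint the equivalence would not go through, so I would cite Theorem~\ref{The-2-40} explicitly at that juncture rather than treating the collapse of the two conditions as obvious.
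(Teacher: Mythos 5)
Your proposal is correct and follows essentially the same route as the paper: both work in the coordinates of the decomposition $(\ref{core-1})$/$(\ref{core-7})$, reduce the formula ${\M}^{\mbox{\tiny\textcircled{\#}}}=A^{\mbox{\tiny\textcircled{\#}}}-\varepsilon A^{\mbox{\tiny\textcircled{\#}}}BA^{\mbox{\tiny\textcircled{\#}}}$ to the single block condition $B_3=0$ via the compact formula (you use its block form $(\ref{xiaofenjie})$, the paper expands the five terms of $(\ref{changjinhua})$ individually), and then identify $(\ref{core-jianhuan})$ with $B_3=B_4=0$, using $B_4=B_3K^{-1}L$ to collapse the pair. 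Your explicit flagging of that last collapse is a point the paper handles identically, so there is nothing further to add.
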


\begin{proof}
Suppose that $A$ is core invertible.
Let   the decomposition of   $A$ be as  in $(\ref{core-1})$,
and the form of $B$ be as in (\ref{core-7}).
Then
\begin{align}
\label{zhangkai-20220524}
-{A^{\mbox{\tiny\textcircled{\#}}}BA^{\mbox{\tiny\textcircled{\#}}}}
&=
 U\left[\begin{matrix}
       -\left(\Sigma K\right)^{-1}B_{1}\left(\Sigma K\right)^{-1}&       0
\\                                       0&                              0
\end{matrix}\right]U^\mathrm{ T },
\\
\label{zhangkai-1}
 -A^{\mbox{\tiny\textcircled{\#}}}BA^{+}+A^{\#}BA^{+}
 &
 =
 U\left[\begin{matrix}
       \left(\Sigma K\right)^{-1}K^{-1}L\left(B_{3}K^\mathrm{ T }
       +B_{4}L^\mathrm{ T }\right)\Sigma^{-1}
       &       0
\\               0&               0
\end{matrix}\right]U^\mathrm{ T },
\\
\label{zhangkai-2}
-A^{\#}BA^{\mbox{\tiny\textcircled{\#}}}
  =
  U
 &
 \left[\begin{matrix}
       -\left(\Sigma K\right)^{-1}B_{1}\left(\Sigma K\right)^{-1}
       -\left(\Sigma K\right)^{-1}K^{-1}LB_{3}\left(\Sigma K\right)^{-1}&       0
\\                                                                                                                               0&       0
\end{matrix}\right]U^\mathrm{ T },
\\
\label{zhangkai-3}
 A^{\mbox{\tiny\textcircled{\#}}}\left(BA^{+}\right)^\mathrm{ T }\left(I_n-AA^{+}\right)
 &
 =
 U\left[\begin{matrix}
      0&        \left(\Sigma K\right)^{-1}\left(B_{3}K^\mathrm{ T }\Sigma^{-1}
      +B_{4}L^\mathrm{ T }\Sigma^{-1}\right)^\mathrm{ T }
\\    0&                             0
\end{matrix}\right]U^\mathrm{ T },
\\
\label{zhangkai-4}
\left(I_n-AA^{\#}\right)BA^{\#}A^{\mbox{\tiny\textcircled{\#}}}
 &
 =
 U\left[\begin{matrix}
      -K^{-1}LB_{3}\left(\Sigma K\right)^{-2}&        0
\\            B_{3}\left(\Sigma K\right)^{-2}&        0
\end{matrix}\right]U^\mathrm{ T },
\\
 \label{xiamian}
\left(I_n-AA^{+}\right)B
&
=
U\left[\begin{matrix}
      0&        0
\\    0&        I_{n-r}
\end{matrix}\right]
\left[\begin{matrix}
      B_1&        B_2
\\    B_3&        B_4
\end{matrix}\right]
U^\mathrm{ T }
=
U\left[\begin{matrix}
        0&        0
\\    B_3&        B_4
\end{matrix}\right]
U^\mathrm{ T }.
\end{align}

``$\Longrightarrow$''
\
Let the DCGI ${\M}^{\mbox{\tiny\textcircled{\#}}}$ of
${\M}
=A+\varepsilon B\in{\mathbb{D}}_{n, n}$  exist,
 and ${\M}^{\mbox{\tiny\textcircled{\#}}}
 =A^{\mbox{\tiny\textcircled{\#}}}-\varepsilon {A^{\mbox{\tiny\textcircled{\#}}}BA^{\mbox{\tiny\textcircled{\#}}}}$.
According to Theorem \ref{core-20},
we get ${\rm Ind}\left(A\right)=1$
 and $B_4=B_3K^{-1}L$.
 To make ${\M}^{\mbox{\tiny\textcircled{\#}}}
 =A^{\mbox{\tiny\textcircled{\#}}}-\varepsilon {A^{\mbox{\tiny\textcircled{\#}}}BA^{\mbox{\tiny\textcircled{\#}}}}$,
from the compact formula for DCGI   (\ref{changjinhua}),
we get
\begin{align*}
 {A^{\mbox{\tiny\textcircled{\#}}}BA^{\mbox{\tiny\textcircled{\#}}}}
 =
 -A^{\mbox{\tiny\textcircled{\#}}}BA^{+}+A^{\#}BA^{+}
 -A^{\#}BA^{\mbox{\tiny\textcircled{\#}}}
 +A^{\mbox{\tiny\textcircled{\#}}}\left(BA^{+}\right)^\mathrm{ T }\left(I_n-AA^{+}\right)
 +\left(I_n-AA^{\#}\right)BA^{\#}A^{\mbox{\tiny\textcircled{\#}}}.
\end{align*}
It follows from
 (\ref{zhangkai-20220524}), (\ref{zhangkai-1}),
  (\ref{zhangkai-2}), (\ref{zhangkai-3}) and (\ref{zhangkai-4})
  that
  $B_{3}\left(\Sigma K\right)^{-2}=0$, so $B_{3}=0$.
 Since $B_4=B_3K^{-1}L$, then $B_4=0$.
 By applying  (\ref{xiamian}), $B_{3}=0$ and  $B_4=0$,
we derive $\left(I_n-AA^{+}\right)B=0$.
Thus, ${\rm Ind}\left(A\right)=1$ and $\left(I_n-AA^{+}\right)B=0$ are proved.

``$\Longleftarrow$''
 \
Let ${\rm Ind}\left(A\right)=1$ and $\left(I_n-AA^{+}\right)B=0$,
then $\left(I_n-AA^{+}\right)B\left(I_n-AA^{\#}\right)=0$.
According to Theorem
\ref{zhibiaoMP}, the DCGI of  ${\M}$ exists.
We know that the real part of the DCGI is $A^{\mbox{\tiny\textcircled{\#}}}$,
that is,
${\M}^{\mbox{\tiny\textcircled{\#}}}=A^{\mbox{\tiny\textcircled{\#}}}+\varepsilon R.$
 Put equations (\ref{core-1}), (\ref{core-3})
 and (\ref{core-7}) into (\ref{xiamian})
 to get  $\left(I_n-AA^{+}\right)B=0$.
So $B_3=0$ and $B_4=0$. Put $B_3=0$ and $B_4=0$ into (\ref{changjinhua}) to get
\begin{align}
\nonumber
R
&=-A^{\mbox{\tiny\textcircled{\#}}}BA^{+}+A^{\#}BA^{+}-A^{\#}BA^{\mbox{\tiny\textcircled{\#}}}
+A^{\mbox{\tiny\textcircled{\#}}}(BA^{+})^\mathrm{ T }\left(I_n-AA^{+}\right)
+\left(I_n-AA^{\#}\right)BA^{\#}A^{\mbox{\tiny\textcircled{\#}}}
\\
\label{000}
&=U\left[\begin{matrix}
       -\left(\Sigma K\right)^{-1}B_{1}\left(\Sigma K\right)^{-1}&       0
\\                                       0&                              0
\end{matrix}\right]U^\mathrm{ T }=-{A^{\mbox{\tiny\textcircled{\#}}}BA^{\mbox{\tiny\textcircled{\#}}}}.
\end{align}
Therefore,
 ${\M}^{\mbox{\tiny\textcircled{\#}}}
 =A^{\mbox{\tiny\textcircled{\#}}}-\varepsilon {A^{\mbox{\tiny\textcircled{\#}}}BA^{\mbox{\tiny\textcircled{\#}}}}$.
\end{proof}

Since it is well known that $\left(I_n-AA^{+}\right)B=0$
if and only if
$\rk \left(\left[\begin{matrix}A&B\end{matrix}\right]\right)=\rk \left(A\right)$,
we get the following Theorem \ref{suibian-2}.

\begin{theorem}
\label{suibian-2}
Let the DCGI ${\M}^{\mbox{\tiny\textcircled{\#}}}$ of
${\M}=A+\varepsilon B\in{\mathbb{D}}_{n, n}$   exist,
where $\rk(A)=r$.
Then ${\M}^{\mbox{\tiny\textcircled{\#}}}
 =A^{\mbox{\tiny\textcircled{\#}}}-\varepsilon {A^{\mbox{\tiny\textcircled{\#}}}BA^{\mbox{\tiny\textcircled{\#}}}}$
 is equivalent to
${\rm Ind}\left(A\right)=1$ and
$\rk \left(\left[\begin{matrix}A&B\end{matrix}\right]\right)
=\rk \left(A\right)$.
\end{theorem}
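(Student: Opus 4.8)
The plan is to obtain the statement as an immediate consequence of Theorem \ref{suibian} together with a standard rank characterization of range inclusion. Theorem \ref{suibian} already establishes that, under the standing hypothesis that ${\M}^{\mbox{\tiny\textcircled{\#}}}$ exists, the identity ${\M}^{\mbox{\tiny\textcircled{\#}}}=A^{\mbox{\tiny\textcircled{\#}}}-\varepsilon A^{\mbox{\tiny\textcircled{\#}}}BA^{\mbox{\tiny\textcircled{\#}}}$ holds if and only if ${\rm Ind}\left(A\right)=1$ and $\left(I_n-AA^{+}\right)B=0$. Hence the only new ingredient required is the equivalence of the dual-part condition $\left(I_n-AA^{+}\right)B=0$ with the rank condition $\rk\left(\left[\begin{matrix}A&B\end{matrix}\right]\right)=\rk\left(A\right)$.

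To justify that equivalence I would use the fact that $AA^{+}$ is the orthogonal projector onto $\R\left(A\right)$, so that $I_n-AA^{+}$ projects onto its orthogonal complement. Consequently $\left(I_n-AA^{+}\right)B=0$ says precisely that every column of $B$ lies in $\R\left(A\right)$, i.e.\ $\R\left(B\right)\subseteq\R\left(A\right)$. Because $\R\left(A\right)\subseteq\R\left(\left[\begin{matrix}A&B\end{matrix}\right]\right)$ always holds, this inclusion is in turn equivalent to $\R\left(\left[\begin{matrix}A&B\end{matrix}\right]\right)=\R\left(A\right)$, and passing to dimensions gives $\rk\left(\left[\begin{matrix}A&B\end{matrix}\right]\right)=\rk\left(A\right)$; conversely the rank equality forces the range equality and hence $\left(I_n-AA^{+}\right)B=0$. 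This is the well-known rank test flagged in the paragraph preceding the theorem.

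Chaining the two equivalences yields exactly the claim: assuming ${\M}^{\mbox{\tiny\textcircled{\#}}}$ exists, the compact form ${\M}^{\mbox{\tiny\textcircled{\#}}}=A^{\mbox{\tiny\textcircled{\#}}}-\varepsilon A^{\mbox{\tiny\textcircled{\#}}}BA^{\mbox{\tiny\textcircled{\#}}}$ is equivalent to ${\rm Ind}\left(A\right)=1$ together with $\rk\left(\left[\begin{matrix}A&B\end{matrix}\right]\right)=\rk\left(A\right)$. I do not anticipate any genuine obstacle here, since the theorem is essentially Theorem \ref{suibian} with its dual-part equation rewritten in rank form; the only point demanding minor care is invoking the trivial inclusion $\R\left(A\right)\subseteq\R\left(\left[\begin{matrix}A&B\end{matrix}\right]\right)$ in the converse direction of the rank argument.
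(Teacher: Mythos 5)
Your proposal is correct and follows exactly the paper's route: the paper also derives this theorem directly from Theorem \ref{suibian} by invoking the well-known equivalence of $\left(I_n-AA^{+}\right)B=0$ with $\rk\left(\left[\begin{matrix}A&B\end{matrix}\right]\right)=\rk\left(A\right)$. You merely spell out the projector/range-inclusion justification that the paper leaves implicit.
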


Next, we continue to analyze DGGI in the form of
${\M}^{\#}=A^{\#}-\varepsilon {A^{\#}BA^{\#}}$.

\begin{theorem}
\label{gaogaogao5}
Let the DGGI ${\M}^{\#}$ of  ${\M}=A+\varepsilon B\in{\mathbb{D}}_{n, n}$    exist,
where $\rk(A)=r$.
Then ${\M}^{\#}=A^{\#}-\varepsilon {A^{\#}BA^{\#}}$
  is equivalent to
\begin{align}
\label{The-2-M-eq12}
B\left(I_n-AA^{\#}\right)=0
\
\mbox{\rm and  }
\left(I_n-AA^{\#}\right)B=0.
\end{align}
\end{theorem}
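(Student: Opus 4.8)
The plan is to use the explicit formula for the DGGI supplied by Lemma~\ref{core-yingli} and reduce the asserted identity to a single matrix equation, which I then analyse by exploiting the projector $P:=AA^{\#}=A^{\#}A$ attached to the group inverse. Since $\M^{\#}$ exists, Lemma~\ref{core-yingli} guarantees ${\rm Ind}\left(A\right)=1$, the existence condition $\left(I_n-AA^{\#}\right)B\left(I_n-AA^{\#}\right)=0$, and the representation $\M^{\#}=A^{\#}+\varepsilon R$ with $R$ as in (\ref{The-group}). Consequently the claimed form $\M^{\#}=A^{\#}-\varepsilon A^{\#}BA^{\#}$ is equivalent to $R=-A^{\#}BA^{\#}$, i.e. to
\begin{align*}
(A^{\#})^2B\left(I_n-AA^{\#}\right)+\left(I_n-AA^{\#}\right)B(A^{\#})^2=0.
\end{align*}
Writing $T_1=(A^{\#})^2B\left(I_n-P\right)$ and $T_2=\left(I_n-P\right)B(A^{\#})^2$, the whole theorem then reduces to showing that $T_1+T_2=0$ is equivalent to the two relations in (\ref{The-2-M-eq12}).

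For the forward direction I would first separate the two summands. Using the group-inverse identities $P(A^{\#})^2=(A^{\#})^2$ and $P\left(I_n-P\right)=0$ one checks $PT_1=T_1$ and $PT_2=0$; hence left-multiplying $T_1+T_2=0$ by $P$ gives $T_1=0$, and therefore $T_2=0$ as well, so no genuine cancellation between the two terms can occur. Next, from $A^2(A^{\#})^2=P$ and $(A^{\#})^2A^2=P$ (both immediate from $AP=PA=A$ and $A^{\#}P=PA^{\#}=A^{\#}$), left-multiplying $T_1=0$ by $A^2$ yields $PB\left(I_n-P\right)=0$, while right-multiplying $T_2=0$ by $A^2$ yields $\left(I_n-P\right)BP=0$. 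Finally I combine these with the standing existence condition $\left(I_n-P\right)B\left(I_n-P\right)=0$: adding it to $PB\left(I_n-P\right)=0$ gives $B\left(I_n-P\right)=0$, and adding it to $\left(I_n-P\right)BP=0$ gives $\left(I_n-P\right)B=0$, which are exactly the relations (\ref{The-2-M-eq12}).

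The converse is immediate: if $B\left(I_n-AA^{\#}\right)=0$ and $\left(I_n-AA^{\#}\right)B=0$, then $T_1=T_2=0$, so $R=-A^{\#}BA^{\#}$ and $\M^{\#}=A^{\#}-\varepsilon A^{\#}BA^{\#}$. The step I expect to be the crux is the forward direction's two-stage upgrade: the projector argument only delivers $PB\left(I_n-P\right)=0$ and $\left(I_n-P\right)BP=0$, and it is essential to invoke the existence condition $\left(I_n-P\right)B\left(I_n-P\right)=0$ to conclude the full vanishing $B\left(I_n-P\right)=0$ and $\left(I_n-P\right)B=0$; without the hypothesis that $\M^{\#}$ exists the equivalence would fail. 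As a consistency check, and to stay in line with the decompositions (\ref{core-1}) and (\ref{core-7}) used elsewhere, one may instead substitute the block forms of $A^{\#}$, $(A^{\#})^2$ and $I_n-AA^{\#}$: after using $B_4=B_3K^{-1}L$ (forced by existence via (\ref{yong})), the summand $T_2$ contributes a nonsingular multiple of $B_3$ to the bottom-left block while $T_1$ does not, isolating $B_3=0$; then $T_1$ forces $B_2=B_1K^{-1}L$, and these block conditions are precisely those characterising (\ref{The-2-M-eq12}).
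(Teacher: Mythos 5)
Your proof is correct, and it takes a genuinely different route from the paper's. The paper substitutes the block decompositions (\ref{core-1}), (\ref{core-4}), (\ref{core-7}) into $\left(A^{\#}\right)^2B\left(I_n-AA^{\#}\right)$ and $\left(I_n-AA^{\#}\right)B\left(A^{\#}\right)^2$, uses $B_4=B_3K^{-1}L$ from existence, and reads off $B_3=0$ and $B_2=B_1K^{-1}L$ from the vanishing of individual blocks before verifying (\ref{The-2-M-eq12}) by direct multiplication. You instead work coordinate-free with the projector $P=AA^{\#}=A^{\#}A$: both approaches first reduce, via Lemma~\ref{core-yingli} and the uniqueness of the DGGI, to the single identity $T_1+T_2=0$ with $T_1=\left(A^{\#}\right)^2B\left(I_n-P\right)$ and $T_2=\left(I_n-P\right)B\left(A^{\#}\right)^2$, but your identities $P\left(A^{\#}\right)^2=\left(A^{\#}\right)^2$, $P\left(I_n-P\right)=0$ and $A^2\left(A^{\#}\right)^2=\left(A^{\#}\right)^2A^2=P$ split the sum into $T_1=0$ and $T_2=0$ and then yield $PB\left(I_n-P\right)=0$ and $\left(I_n-P\right)BP=0$ without ever choosing a basis; the block computation the paper performs is exactly the coordinate shadow of this separation (your $T_2$ carrying $B_3$ in the bottom-left block is the same observation). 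What your route buys is brevity, independence from the Hartwig--Spindelb\"ock form, and an explicit display of where the existence condition $\left(I_n-P\right)B\left(I_n-P\right)=0$ is indispensable for upgrading the two ``partial'' vanishings to the full conditions (\ref{The-2-M-eq12}); what the paper's route buys is uniformity with the computations used throughout Sections \ref{Section-DualIndex} and \ref{Section-DCGI}. All the steps you use check out, including the converse, so there is no gap.
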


\begin{proof}
  Suppose that $A$ is core invertible.
Let   the decomposition of   $A$ be as  in $(\ref{core-1})$,
the decomposition of $A^{\#}$ be as  in (\ref{core-4})
and the form of $B$ be as in (\ref{core-7}).
Then we have
\begin{align}
\nonumber
&\left(A^{\#}\right)^2B(I_n-AA^{\#})=
 U\left[\begin{matrix}
                    0&       \left(\Sigma K\right)^{-2}\left( -B_1K^{-1}L
                    +B_2-K^{-1}LB_3K^{-1}L
                    +K^{-1}LB_4\right)
\\                  0&                           0
\end{matrix}\right]U^\mathrm{T}
\end{align}
and
\begin{align}
\label{xinThe-2-M-eq14}
\left(I_n-AA^{\#}\right)B\left(A^{\#}\right)^2
=
U\left[\begin{matrix}
           -K^{-1}LB_3\left(\Sigma K\right)^{-2}&           -K^{-1}LB_3\left(\Sigma K\right)^{-2}K^{-1}L
\\                 B_3\left(\Sigma K\right)^{-2}&           B_3\left(\Sigma K\right)^{-2}K^{-1}L
\end{matrix}\right]U^\mathrm{ T }.
\end{align}

``$\Longrightarrow$''
\
Let the DGGI ${\M}^{\#}$ of ${\M}\in{\mathbb{D}}_{n, n}$ exist,
then ${\rm Ind}({\M})=1$ from Theorem \ref{core-20}. Therefore, ${\rm Ind}(A)=1$
and $B_4=B_3K^{-1}L$ from Theorem \ref{The-2-40}.
Thus,
\begin{align}
\label{xinThe-2-M-eq13bian}
\left(A^{\#}\right)^2B(I_n-AA^{\#})
&
=
U\left[\begin{matrix}
                    0&        -\left(\Sigma K\right)^{-2}B_1K^{-1}L+\left(\Sigma K\right)^{-2}B_2
\\                  0&                           0
\end{matrix}\right]U^\mathrm{ T }.
\end{align}

If
${\M}^{\#}=A^{\#}-\varepsilon {A^{\#}BA^{\#}}$,
by (\ref{qunni-gongshi}),
 (\ref{xinThe-2-M-eq14}) and
 (\ref{xinThe-2-M-eq13bian}),
 we have $\left(A^{\#}\right)^2B\left(I_n-AA^{\#}\right)=0$
 and
$\left(I_n-AA^{\#}\right)B\left(A^{\#}\right)^2=0$,
 so $B_2=B_1K^{-1}L$ and $B_3=0$.
It follows that
 \begin{align}
 \label{B2}
B=U\left[\begin{matrix}
      B_1&      B_1K^{-1}L
\\      0&            0
\end{matrix}\right]U^\mathrm{ T }.
\end{align}
Therefore,
we have
\begin{align*}
B\left(I_n-AA^{\#}\right)
&
=
U\left[\begin{matrix}
     B_1&   B_1K^{-1}L
\\     0&       0
\end{matrix}\right]U^\mathrm{ T }U
\left[\begin{matrix}
       0&        -K^{-1}L
\\     0&       I_{n-r}
\end{matrix}\right]U^\mathrm{ T }=0;
\\
\left(I_n-AA^{\#}\right)B
&
=
U\left[\begin{matrix}
       0&       -K^{-1}L
\\     0&       I_{n-r}
\end{matrix}\right]U^\mathrm{ T }U
\left[\begin{matrix}
       B_1&   B_1K^{-1}L
\\       0&       0
\end{matrix}\right]U^\mathrm{ T }=0,
\end{align*}
that is, (\ref{The-2-M-eq12}) is established.

``$\Longleftarrow$''
\
Assuming $B\left(I_n-AA^{\#}\right)=0$ and $\left(I_n-AA^{\#}\right)B=0$,
it is easy to check that
 $\left(I_n-AA^{\#}\right)B\left(I_n-A^{\#}A\right)=0$,
$\left(A^{\#}\right)^2B\left(I_n-AA^{\#}\right)=0$
and
$\left(I_n-AA^{\#}\right)B\left(A^{\#}\right)^2=0$.
According to Lemma \ref{core-yingli}, the DGGI of ${\M}$ exists.
Therefore, by using  (\ref{qunni-gongshi}),
 we   get  ${\M}^{\#}=A^{\#}-\varepsilon A^{\#}BA^{\#}$.
\end{proof}

In the following Theorem \ref{xinjia-1-1},
Theorem \ref{xinjia-1-2}
and
Theorem \ref{xinjia-1-3},
we consider the relationships among
${\M}^{\#}=A^{\#}-\varepsilon {A^{\#}BA^{\#}}$,
$ {\M}^{+}=A^{+}-\varepsilon A^{+}BA^{+}$
and
${\M}^{\mbox{\tiny\textcircled{\#}}}
 =A^{\mbox{\tiny\textcircled{\#}}}-\varepsilon {A^{\mbox{\tiny\textcircled{\#}}}BA^{\mbox{\tiny\textcircled{\#}}}}$.
\begin{theorem}
\label{xinjia-1-1}
Let ${\M}=A+\varepsilon B\in{\mathbb{D}}_{n, n}$, then DGGI exists
and ${\M}^{\#}=A^{\#}-\varepsilon A^{\#}BA^{\#}$
if and only if ${\rm Ind}\left(A\right)=1$,
 DMPGI exists   and
${\M}^{+}=A^{+}-\varepsilon A^{+}BA^{+} $.
\end{theorem}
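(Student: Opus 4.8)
The plan is to split the biconditional into an \emph{existence} part and a \emph{form} part and to assemble each from results already established. For the existence part I would show that ``DGGI exists'' and ``$\Ind(A)=1$ together with the existence of DMPGI'' are literally the same hypothesis. By Theorem \ref{zhibiaoqunni-xin}, ${\M}^{\#}$ exists if and only if the dual index of ${\M}$ is one, and by Theorem \ref{zhibiaoMPni} the dual index of ${\M}$ is one if and only if $\Ind(A)=1$ and ${\M}^{+}$ exists. Chaining these two equivalences shows that the existence hypotheses on the two sides coincide, and in particular that under either side we may assume the dual index of ${\M}$ is one, so that $\Ind(A)=1$ and both ${\M}^{\#}$ and ${\M}^{+}$ exist.

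Working under this common assumption, I would reduce the two \emph{form} identities to projection conditions on $B$. Since ${\M}^{\#}$ exists, Theorem \ref{gaogaogao5} says that ${\M}^{\#}=A^{\#}-\varepsilon A^{\#}BA^{\#}$ is equivalent to $(I_n-AA^{\#})B=0$ and $B(I_n-AA^{\#})=0$. On the other side, (\ref{P}) gives ${\M}^{\PP}=A^{+}-\varepsilon A^{+}BA^{+}$, so the identity ${\M}^{+}=A^{+}-\varepsilon A^{+}BA^{+}$ is exactly ${\M}^{+}={\M}^{\PP}$; because ${\M}^{+}$ already exists, Lemma \ref{lemma2} makes this equivalent to $(I_n-AA^{+})B=0$ and $B(I_n-A^{+}A)=0$. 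Thus the theorem collapses to proving that the pair $\{(I_n-AA^{\#})B=0,\ B(I_n-AA^{\#})=0\}$ is equivalent to the pair $\{(I_n-AA^{+})B=0,\ B(I_n-A^{+}A)=0\}$.

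The main obstacle is precisely this last equivalence, since one must reconcile the oblique projector $AA^{\#}$ with the orthogonal projectors $AA^{+}$ and $A^{+}A$. The key observation is that, when $\Ind(A)=1$, all three are idempotent and share the relevant ranges: $\R(AA^{\#})=\R(AA^{+})=\R(A)$, while $\R\big((AA^{\#})^\mathrm{T}\big)=\R(A^{+}A)=\R(A^\mathrm{T})$. The second chain holds because $\mathcal{N}(AA^{\#})=\mathcal{N}(A)$ (using $AA^{\#}=A^{\#}A$ and $AA^{\#}A=A$), whence $\R\big((AA^{\#})^\mathrm{T}\big)=\mathcal{N}(A)^{\perp}=\R(A^\mathrm{T})$. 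Since for any idempotent $P$ one has $PB=B$ exactly when $\R(B)\subseteq\R(P)$, the conditions $(I_n-AA^{\#})B=0$ and $(I_n-AA^{+})B=0$ both express $\R(B)\subseteq\R(A)$, and, after transposing, $B(I_n-AA^{\#})=0$ and $B(I_n-A^{+}A)=0$ both express $\R(B^\mathrm{T})\subseteq\R(A^\mathrm{T})$. Hence the two pairs coincide and the form equivalence follows, completing the proof once combined with the existence equivalence of the first paragraph.

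As a fallback matching the computational style used elsewhere, I would instead insert the decomposition (\ref{core-1}) and the block form (\ref{core-7}): both pairs of conditions then reduce to the single block statement $B_3=0$, $B_4=0$, $B_2=B_1K^{-1}L$, the passage between the $K^\mathrm{T}$-form and the $K^{-1}$-form being exactly the step (post-multiplying by $K^{-1}L$ and invoking $KK^\mathrm{T}+LL^\mathrm{T}=I_r$) already carried out in the proof of Theorem \ref{zhibiaoMP}.
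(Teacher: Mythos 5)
Your proof is correct, and its central step takes a genuinely different route from the paper's. The paper argues each direction separately: it uses Theorem \ref{gaogaogao5} (resp.\ Lemma \ref{lemma2}) to translate the displayed formula into the pair $\{(I_n-AA^{\#})B=0,\ B(I_n-AA^{\#})=0\}$ (resp.\ $\{(I_n-AA^{+})B=0,\ B(I_n-A^{+}A)=0\}$), then substitutes the decomposition $(\ref{core-1})$, $(\ref{core-7})$ to show that either pair forces $B_3=0$, $B_4=0$, $B_2=B_1K^{-1}L$, and finally verifies the other pair by block multiplication. You enter and exit through the same two results, but you settle the equivalence of the two pairs without coordinates: since $\Ind(A)=1$, the idempotents $AA^{\#}$ and $AA^{+}$ both have range $\R(A)$, while $\left(AA^{\#}\right)^{\mathrm T}$ and $A^{+}A$ both have range $\R\left(A^{\mathrm T}\right)$ (the latter because $\mathcal{N}(AA^{\#})=\mathcal{N}(A)$), so both pairs say precisely $\R(B)\subseteq\R(A)$ and $\R\left(B^{\mathrm T}\right)\subseteq\R\left(A^{\mathrm T}\right)$; indeed the four conditions match up individually, which is slightly more than is needed. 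Your up-front separation of the existence claim (chaining Theorems \ref{zhibiaoqunni-xin} and \ref{zhibiaoMPni}) from the formula claim is also logically cleaner than the paper's, which re-establishes the existence of the other inverse inside each direction. What the paper's computation buys is uniformity with the rest of the section and an explicit block description of the admissible $B$; what your argument buys is brevity and a transparent structural reason why the two sides impose the same condition. Your fallback paragraph is essentially the paper's proof, so nothing is lost if a referee prefers the computational style.
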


\begin{proof}
Suppose that $A$ is group invertible.
Let   the decomposition of   $A$ be as  in $(\ref{core-1})$,
and the form of $B$ be as in (\ref{core-7}).

``$\Longrightarrow$''
\
Let DGGI exist and ${\M}^{\#}=A^{\#}-\varepsilon A^{\#}BA^{\#}$,
then $B\left(I_n-AA^{\#}\right)=0$ and  $\left(I_n-AA^{\#}\right)B=0$ from Theorem \ref{gaogaogao5}.

Put equations (\ref{core-1}), (\ref{core-4})and (\ref{core-7})
 into $\left(I_n-AA^{\#}\right)B=0$,
then we get
\begin{align}
\nonumber
 \left(I_n-AA^{\#}\right)B=
 U\left[\begin{matrix}
       0&       -K^{-1}L
\\     0&       I_{n-r}
\end{matrix}\right]
\left[\begin{matrix}
       B_1&       B_2
\\     B_3&       B_4
\end{matrix}\right]U^\mathrm{ T }=
U\left[\begin{matrix}
       -K^{-1}LB_3&       -K^{-1}LB_4
\\             B_3&            B_4
\end{matrix}\right]U^\mathrm{ T }=0.
\end{align}
 Therefore,
 $B_3=0$ and $B_4=0$.
By substituting   (\ref{core-1}) and  (\ref{core-4})
into $B\left(I_n-AA^{\#}\right)=0$,
 It follows
that
$$B\left(I_n-AA^{\#}\right)=
U\left[\begin{matrix}
         B_1&       B_2
\\         0&        0
\end{matrix}\right]
\left[\begin{matrix}
       0&       -K^{-1}L
\\     0&       I_{n-r}
\end{matrix}\right]U^\mathrm{ T }=
U\left[\begin{matrix}
       0&       -B_1K^{-1}L+B_2
\\     0&       0
\end{matrix}\right]U^\mathrm{ T }
=0,$$
that is,
$B_2=B_1K^{-1}L$.

Since  $B_3=0$, $B_4=0$ and $B_2=B_1K^{-1}L$,
substituting   (\ref{core-1}) and (\ref{core-3})
into $\left(I_n-AA^{+}\right)B$  and $B\left(I_n-A^{+}A\right)$,
we have
$\left(I_n-AA^{+}\right)B=B\left(I_n-A^{+}A\right)=0$.
By applying Lemma \ref{lemma2}, we know that DMPGI exists and
 ${\M}^{+}=A^{+}-\varepsilon A^{+}BA^{+}.$

And because the DGGI exists, then ${\rm Ind}\left(A\right)=1$.
To sum up, ${\rm Ind}\left(A\right)=1$, DMPGI exists and ${\M}^{+}=A^{+}-\varepsilon A^{+}BA^{+}.$

``$\Longleftarrow$''
\
Let ${\rm Ind}\left(A\right)=1$, DMPGI exist
 and
  ${\M}^{+}=A^{+}-\varepsilon A^{+}BA^{+}.$
By applying Lemma \ref{lemma2}, we know $\left(I_n-AA^{+}\right)B=0$
and $B\left(I_n-A^{+}A\right)=0$.
Substituting    (\ref{core-1}), (\ref{core-3}) and (\ref{core-7})
 into $\left(I_n-AA^{+}\right)B=0$, we get
$$\left(I_n-AA^{+}\right)B=
 U\left[\begin{matrix}
       0&          0
\\     0&       I_{n-r}
\end{matrix}\right]
\left[\begin{matrix}
       B_1&       B_2
\\     B_3&       B_4
\end{matrix}\right]U^\mathrm{ T }=
U\left[\begin{matrix}
                 0&       0
\\             B_3&      B_4
\end{matrix}\right]U^\mathrm{ T }=0,$$
 that is,
 $B_3=0$ and $B_4=0$.

Put   (\ref{core-1}), (\ref{core-3}) and (\ref{core-7})
into $B\left(I_n-A^{+}A\right)=0$,
by applying  $B_3=0$ and $B_4=0$,
we get
\begin{align*}
B\left(I_n-A^{+}A\right)
&=U\left[\begin{matrix}
         B_1&       B_2
\\         0&        0
\end{matrix}\right]
\left[\begin{matrix}
     I_r-K^\mathrm{ T }K&       -K^\mathrm{ T }L
\\    -L^\mathrm{ T }K&       I_{n-r}-L^\mathrm{ T }L
\end{matrix}\right]U^\mathrm{ T }
\\
&=U\left[\begin{matrix}
       B_1-B_1K^\mathrm{ T }K-B_2L^\mathrm{ T }K
        & -B_1K^\mathrm{ T }L+B_2-B_2L^\mathrm{ T }L
\\                                             0&                  0
\end{matrix}\right]U^\mathrm{ T }
=0.
\end{align*}
Therefore,
\begin{subnumcases}{}
 \label{fangchengzu1}
 B_1=B_1K^\mathrm{ T }K+B_2L^\mathrm{ T }K
 =\left(B_1K^\mathrm{ T }+B_2L^\mathrm{ T }\right)K,
 \\
 \label{fangchengzu2}
  B_2=B_2L^\mathrm{ T }L+B_1K^\mathrm{ T }L
  =\left(B_2L^\mathrm{ T }+B_1K^\mathrm{ T }\right)L.
 \end{subnumcases}

Since ${\rm Ind}\left(A\right)=1$,
then $K$ is  nonsingular.
Applying (\ref{fangchengzu1})
gives
\begin{align}
\label{zhongjianchangwu}
B_1K^\mathrm{ T }+B_2L^\mathrm{ T }=B_1K^{-1}.
\end{align}
Substituting  equation (\ref{zhongjianchangwu}) into (\ref{fangchengzu2}),
we have
$B_2=B_1K^{-1}L.$

Since  $B_3=0$, $B_4=0$ and $B_2=B_1K^{-1}L$,
substituting   (\ref{core-1}) and (\ref{core-3})
into
$B\left(I_n-AA^{\#}\right)$ and  $\left(I_n-AA^{\#}\right)B$,
we have
$B\left(I_n-AA^{\#}\right)=0$ and  $\left(I_n-AA^{\#}\right)B=0$.
According to Theorem \ref{gaogaogao5},
DGGI exists and  ${\M}^{\#}=A^{\#}-\varepsilon A^{\#}BA^{\#}$.
\end{proof}

\begin{theorem}
\label{xinjia-1-2}
 Let ${\M}=A+\varepsilon B\in{\mathbb{D}}_{n, n}$.
 If ${\rm Ind}\left(A\right)=1$,
 DMPGI exists and
  ${\M}^{+}
 =A^{+}-\varepsilon A^{+}BA^{+}$,
 then DCGI exists and
 ${\M}^{\mbox{\tiny\textcircled{\#}}}
 =A^{\mbox{\tiny\textcircled{\#}}}-\varepsilon A^{\mbox{\tiny\textcircled{\#}}}BA^{\mbox{\tiny\textcircled{\#}}}$.
\end{theorem}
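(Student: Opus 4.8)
The plan is to reduce everything to characterizations already proved, so that no direct computation with the block decompositions of $A$ and $B$ is needed. The standing hypotheses are ${\rm Ind}(A)=1$, the existence of the DMPGI ${\M}^{+}$, and the special shape ${\M}^{+}=A^{+}-\varepsilon A^{+}BA^{+}$. The first step is to feed these into Lemma \ref{lemma2}, which states precisely that ${\M}^{+}$ exists with ${\M}^{+}={\M}^{\PP}=A^{+}-\varepsilon A^{+}BA^{+}$ if and only if $(I_n-AA^{+})B=0$ and $B(I_n-A^{+}A)=0$. Thus the hypothesis immediately yields the two vanishing conditions $(I_n-AA^{+})B=0$ and $B(I_n-A^{+}A)=0$.

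Next I would establish that the DCGI of ${\M}$ actually exists, since Theorem \ref{suibian}, the tool I intend to invoke for the final formula, carries the existence of ${\M}^{\mbox{\tiny\textcircled{\#}}}$ as a standing hypothesis. Post-multiplying $(I_n-AA^{+})B=0$ by $(I_n-AA^{\#})$ gives $(I_n-AA^{+})B(I_n-AA^{\#})=0$; together with ${\rm Ind}(A)=1$ this is exactly the hypothesis of Theorem \ref{core-dengjia}, so the DCGI ${\M}^{\mbox{\tiny\textcircled{\#}}}$ exists. (Alternatively one may route through Theorem \ref{zhibiaoMP} to conclude that the dual index of ${\M}$ is one, and then through Theorem \ref{core-20}.) With existence of ${\M}^{\mbox{\tiny\textcircled{\#}}}$ secured, with ${\rm Ind}(A)=1$ in hand, and with $(I_n-AA^{+})B=0$ already extracted, I am exactly in the setting of Theorem \ref{suibian}, whose conclusion is that ${\M}^{\mbox{\tiny\textcircled{\#}}}=A^{\mbox{\tiny\textcircled{\#}}}-\varepsilon A^{\mbox{\tiny\textcircled{\#}}}BA^{\mbox{\tiny\textcircled{\#}}}$, which closes the argument.

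The argument is essentially a bookkeeping chain among previously proved equivalences, so I do not expect a genuine obstacle. The only point requiring care is the order of invocation: Theorem \ref{suibian} presupposes that ${\M}^{\mbox{\tiny\textcircled{\#}}}$ exists, so the existence step (via Theorem \ref{core-dengjia}, or via Theorem \ref{zhibiaoMP} together with Theorem \ref{core-20}) must be carried out first, before the simplified-formula characterization can be applied. I would also note that the condition $B(I_n-A^{+}A)=0$ produced by Lemma \ref{lemma2} is not actually used for the conclusion, only $(I_n-AA^{+})B=0$ and ${\rm Ind}(A)=1$ are, which is consistent with the fact that the converse direction (treated separately) is what forces the symmetric pair of conditions.
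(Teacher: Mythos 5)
Your proposal is correct and follows essentially the same route as the paper: extract $\left(I_n-AA^{+}\right)B=0$ from Lemma \ref{lemma2}, then invoke Theorem \ref{suibian} to obtain existence of the DCGI and the formula ${\M}^{\mbox{\tiny\textcircled{\#}}}=A^{\mbox{\tiny\textcircled{\#}}}-\varepsilon A^{\mbox{\tiny\textcircled{\#}}}BA^{\mbox{\tiny\textcircled{\#}}}$. Your extra care in securing existence first (via Theorem \ref{core-dengjia}) before applying Theorem \ref{suibian} is a sensible refinement of the same argument, since the ``$\Longleftarrow$'' direction of Theorem \ref{suibian} in fact establishes existence internally.
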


\begin{proof}
Since ${\rm Ind}\left(A\right)=1$, DMPGI exists
and
${\M}^{+}=A^{+}-\varepsilon A^{+}BA^{+}$,
then $\left(I_n-AA^{+}\right)B=0$  by applying Lemma \ref{lemma2}.

According to Theorem \ref{suibian},  if ${\rm Ind}\left(A\right)=1$, $\left(I_n-AA^{+}\right)B=0$,
then the DCGI ${\M}^{\mbox{\tiny\textcircled{\#}}}$ of  ${\M}$  exists,
and
${\M}^{\mbox{\tiny\textcircled{\#}}}
=A^{\mbox{\tiny\textcircled{\#}}}-\varepsilon {A^{\mbox{\tiny\textcircled{\#}}}BA^{\mbox{\tiny\textcircled{\#}}}}.$
\end{proof}

\begin{theorem}
\label{xinjia-1-3}
Let ${\M}=A+\varepsilon B\in{\mathbb{D}}_{n, n}$.
If ${\M}^{\#}$ exists and ${\M}^{\#}=A^{\#}-\varepsilon A^{\#}BA^{\#}$,
then DCGI exists and
${\M}^{\mbox{\tiny\textcircled{\#}}}=A^{\mbox{\tiny\textcircled{\#}}}-\varepsilon A^{\mbox{\tiny\textcircled{\#}}}BA^{\mbox{\tiny\textcircled{\#}}}$.
\end{theorem}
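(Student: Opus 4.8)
The plan is to treat this statement as an immediate corollary of Theorem \ref{xinjia-1-1} and Theorem \ref{xinjia-1-2}, which together supply both halves of the required chain of implications. The hypothesis is that the DGGI ${\M}^{\#}$ exists and satisfies ${\M}^{\#}=A^{\#}-\varepsilon A^{\#}BA^{\#}$, and the target is exactly the conclusion of Theorem \ref{xinjia-1-2}. Since Theorem \ref{xinjia-1-1} characterises precisely when the DGGI has this special form, the whole argument reduces to matching hypotheses with conclusions.

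First I would invoke the forward direction of Theorem \ref{xinjia-1-1}. Its left-hand condition---``DGGI exists and ${\M}^{\#}=A^{\#}-\varepsilon A^{\#}BA^{\#}$''---is verbatim our hypothesis, so it yields that ${\rm Ind}\left(A\right)=1$, that the DMPGI ${\M}^{+}$ exists, and that ${\M}^{+}=A^{+}-\varepsilon A^{+}BA^{+}$. Next I would feed these three conclusions directly into Theorem \ref{xinjia-1-2}, whose hypothesis is precisely ``${\rm Ind}\left(A\right)=1$, DMPGI exists and ${\M}^{+}=A^{+}-\varepsilon A^{+}BA^{+}$''. Its conclusion is that the DCGI exists and ${\M}^{\mbox{\tiny\textcircled{\#}}}=A^{\mbox{\tiny\textcircled{\#}}}-\varepsilon A^{\mbox{\tiny\textcircled{\#}}}BA^{\mbox{\tiny\textcircled{\#}}}$, which is exactly the desired statement, completing the proof.

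As a self-contained alternative that avoids passing through the Moore--Penrose form, I would apply Theorem \ref{gaogaogao5} to the hypothesis to extract $B\left(I_n-AA^{\#}\right)=0$ and $\left(I_n-AA^{\#}\right)B=0$. Writing $A$ and $B$ in the decompositions (\ref{core-1}) and (\ref{core-7}), the second of these forces $B_3=0$ and $B_4=0$; comparing with (\ref{xiamian}) shows that this is exactly $\left(I_n-AA^{+}\right)B=0$. Since ${\M}^{\#}$ exists we also have ${\rm Ind}\left(A\right)=1$, so Theorem \ref{suibian} applies and delivers both the existence of the DCGI and the formula ${\M}^{\mbox{\tiny\textcircled{\#}}}=A^{\mbox{\tiny\textcircled{\#}}}-\varepsilon A^{\mbox{\tiny\textcircled{\#}}}BA^{\mbox{\tiny\textcircled{\#}}}$.

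I expect no genuine obstacle here: the result is a corollary, and the only point requiring a moment's care is confirming that the output of Theorem \ref{xinjia-1-1} coincides word-for-word with the input of Theorem \ref{xinjia-1-2} (it does), or, in the alternative route, that $\left(I_n-AA^{\#}\right)B=0$ implies $\left(I_n-AA^{+}\right)B=0$. The latter holds because in the decomposition (\ref{core-1})--(\ref{core-7}) both conditions reduce to $B_3=B_4=0$, so no additional computation beyond the block-matrix expressions already recorded in the proofs of Theorem \ref{suibian} and Theorem \ref{gaogaogao5} is needed.
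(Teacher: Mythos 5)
Your primary argument—invoking the forward direction of Theorem \ref{xinjia-1-1} to obtain ${\rm Ind}\left(A\right)=1$, the existence of the DMPGI, and ${\M}^{+}=A^{+}-\varepsilon A^{+}BA^{+}$, and then feeding these directly into Theorem \ref{xinjia-1-2}—is exactly the paper's proof. Your alternative route through Theorem \ref{gaogaogao5} and Theorem \ref{suibian} is also sound, but it is not needed here.
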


\begin{proof}
If ${\M}^{\#}$ exists and ${\M}^{\#}=A^{\#}-\varepsilon A^{\#}BA^{\#}$,
then
${\rm Ind}\left(A\right)=1$,
DMPGI exists and ${\M}^{+}=A^{+}-\varepsilon A^{+}BA^{+}$ from Theorem \ref{xinjia-1-1}.
And
 then DCGI exists
and
${\M}^{\mbox{\tiny\textcircled{\#}}}=A^{\mbox{\tiny\textcircled{\#}}}-\varepsilon A^{\mbox{\tiny\textcircled{\#}}}BA^{\mbox{\tiny\textcircled{\#}}}$
from Theorem \ref{xinjia-1-2}.
\end{proof}

  \begin{example}
Let \label{xinjializi-1}
${\M}=
\left[\begin{matrix}
    1&         0
\\  0&         0
\end{matrix}\right]+\varepsilon
\left[\begin{matrix}
    0&    1
\\  0&    0
\end{matrix}\right]$.
By applying
(\ref{The-DMPGI}),
 (\ref{qunni-gongshi})
 and
 (\ref{changjinhua}),
we have
\begin{align}
{\M}^{+}=
\left[\begin{matrix}
    1&         0
\\  0&         0
\end{matrix}\right]+\varepsilon
\left[\begin{matrix}
    0&    0
\\  1&    0
\end{matrix}\right],
\
{\M}^{\#}=
\left[\begin{matrix}
    1&         0
\\  0&         0
\end{matrix}\right]+\varepsilon
\left[\begin{matrix}
    0&    1
\\  0&    0
\end{matrix}\right],
\
{\M}^{\mbox{\tiny\textcircled{\#}}}
 =
\left[\begin{matrix}
    1&         0
\\  0&         0
\end{matrix}\right]+\varepsilon
\left[\begin{matrix}
    0&    0
\\  0&    0
\end{matrix}\right]=
\left[\begin{matrix}
    1&         0
\\  0&         0
\end{matrix}\right],
\nonumber
\end{align}
and
\begin{align}
\nonumber
 A^{\mbox{\tiny\textcircled{\#}}}-\varepsilon A^{\mbox{\tiny\textcircled{\#}}}BA^{\mbox{\tiny\textcircled{\#}}}
 =
\left[\begin{matrix}
    1&         0
\\  0&         0
\end{matrix}\right],
{\M}^{\PP}
=A^{+}-\varepsilon A^{+}BA^{+}
=\left[\begin{matrix}
    1&         0
\\  0&         0
\end{matrix}\right],
A^{\#}-\varepsilon A^{\#}BA^{\#}
=\left[\begin{matrix}
    1&         0
\\  0&         0
\end{matrix}\right].
\end{align}

It is easy to see that
${\M}^{\mbox{\tiny\textcircled{\#}}}
 =A^{\mbox{\tiny\textcircled{\#}}}-\varepsilon A^{\mbox{\tiny\textcircled{\#}}}BA^{\mbox{\tiny\textcircled{\#}}}$,
${\M}^{+}\neq {\M}^{\PP} $
and
${\M}^{\#} \neq  A^{\#}-\varepsilon A^{\#}BA^{\#}$.
 This means that when DCGI exists and ${\M}^{\mbox{\tiny\textcircled{\#}}}
 =A^{\mbox{\tiny\textcircled{\#}}}-\varepsilon A^{\mbox{\tiny\textcircled{\#}}}BA^{\mbox{\tiny\textcircled{\#}}}$,
  there is not necessarily ${\M}^{+}={\M}^{\PP}=A^{+}-\varepsilon A^{+}BA^{+}$
   or
   ${\M}^{\#}= A^{\#}-\varepsilon A^{\#}BA^{\#}.$
\end{example}

\subsection{Symmetric dual matrix}

We know that in real field, the index of a symmetric matrix must be equal to one,
 and its core inverse is equal to its Moore-Penrose inverse and its group inverse.
 But it is not true for some symmetric dual matrices.
 Even some
 symmetric dual matrices  have not  DCGIs and DGGIs.
 For example,
\begin{example}
\label{Ex-2}
${\M}=A+\varepsilon B=
\left[\begin{matrix}
    a&         0&     0
\\  0&         b&     0
\\  0&         0&     0
\end{matrix}\right]+\varepsilon
\left[\begin{matrix}
    0&    0&     0
\\  0&    0&     0
\\  0&    0&     c
\end{matrix}\right] ,$
 where $a, b$ and $c$ are not 0.
Because $\rk \left(A\right)=\rk \left(A^2\right)=2$,
$\rk \left(\left[\begin{matrix}A&B\left(I_3-AA^{\#}\right)\end{matrix}\right]\right)=3$ and $2\neq 3$,
we can get that the dual index of $ \M $ is not one,
 ${\M}$ does not satisfy  Theorem \ref{zhibiaoqunni-xin} and Theorem \ref{core-20}.
Therefore, ${\M}$ has no DGGI and DCGI.
And ${\M}$ also has no DMPGI from Theorem \ref{tianjiatui}.
\end{example}

\begin{theorem}
\label{gaogaogao4}
If the dual matrix ${\M}$ is a symmetric dual matrix, and the dual index of ${\M}$ is one,
 then
\begin{align}
\label{align label-1}
{\M}^{\#}
={\M}^{+}
={\M}^{\mbox{\tiny\textcircled{\#}}}.
\end{align}
\end{theorem}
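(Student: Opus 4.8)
The plan is to show that, under symmetry and dual index one, all three generalized inverses exist, and then to verify the two equalities $\M^{+}=\M^{\#}$ and $\M^{\mbox{\tiny\textcircled{\#}}}=\M^{\#}$ separately, the second reducing to the first by way of the compact formula.

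First I would record the consequences of the hypotheses. Symmetry of $\M=A+\varepsilon B$ forces $A^\mathrm{T}=A$ and $B^\mathrm{T}=B$. Since the dual index of $\M$ is one, Theorem \ref{zhibiaoqunni-xin}, Theorem \ref{zhibiaoMPni} and Theorem \ref{core-20} guarantee that $\M^{\#}$, $\M^{+}$ and $\M^{\mbox{\tiny\textcircled{\#}}}$ all exist and that ${\rm Ind}\left(A\right)=1$. Because $A$ is symmetric with index one, its real generalized inverses collapse: $A^{+}=A^{\#}=A^{\mbox{\tiny\textcircled{\#}}}$, the projectors coincide as $AA^{+}=A^{+}A=AA^{\#}$, the inverse is symmetric so $\left(A^{+}\right)^\mathrm{T}=A^{+}$, and $\left(A^\mathrm{T}A\right)^{+}=\left(AA^\mathrm{T}\right)^{+}=\left(A^{2}\right)^{+}=\left(A^{\#}\right)^{2}$. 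These identities follow at once from the spectral decomposition of a symmetric matrix.

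Next I would prove $\M^{+}=\M^{\#}$ by comparing the explicit dual parts. Substituting $B^\mathrm{T}=B$, $A^{+}=A^{\#}$, $A^{+}A=AA^{\#}$, $AA^{+}=AA^{\#}$ and $\left(A^\mathrm{T}A\right)^{+}=\left(AA^\mathrm{T}\right)^{+}=\left(A^{\#}\right)^{2}$ into the DMPGI formula (\ref{The-DMPGI}), its dual part becomes $-A^{\#}BA^{\#}+\left(A^{\#}\right)^{2}B\left(I_n-AA^{\#}\right)+\left(I_n-AA^{\#}\right)B\left(A^{\#}\right)^{2}$, which is precisely the matrix $R$ of the DGGI formula (\ref{qunni-gongshi}) and (\ref{The-group}). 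Since the real parts already agree ($A^{+}=A^{\#}$), this yields $\M^{+}=\M^{\#}$.

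Finally I would deduce the remaining equality from the compact representation. By Theorem \ref{c-jinhua} the DCGI satisfies $\M^{\mbox{\tiny\textcircled{\#}}}=\M^{\#}\M\M^{+}$; replacing $\M^{+}$ by $\M^{\#}$ (just established) and invoking the defining group-inverse identity $\M^{\#}\M\M^{\#}=\M^{\#}$ (condition $(2)$ for $\widehat{G}=\M^{\#}$) gives $\M^{\mbox{\tiny\textcircled{\#}}}=\M^{\#}\M\M^{\#}=\M^{\#}$. Combining the two equalities produces $\M^{\#}=\M^{+}=\M^{\mbox{\tiny\textcircled{\#}}}$, as claimed. The only genuine work is the bookkeeping in the second step, namely matching the three dual-part summands of the DMPGI against those of the DGGI, but this is purely mechanical once the symmetric real-matrix identities are in place; the conceptual point is merely that the compact expression $\M^{\#}\M\M^{+}$ collapses as soon as $\M^{+}$ and $\M^{\#}$ coincide.
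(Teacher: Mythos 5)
Your proposal is correct and follows essentially the same route as the paper's own proof: both use Theorems \ref{zhibiaoqunni-xin}, \ref{zhibiaoMPni} and \ref{core-20} to get simultaneous existence, exploit the collapse $A^{+}=A^{\#}$ and $\left(A^\mathrm{T}A\right)^{+}=\left(AA^\mathrm{T}\right)^{+}=\left(A^{\#}\right)^{2}$ for symmetric $A$ to match the dual parts of (\ref{The-DMPGI}) and (\ref{qunni-gongshi}), giving ${\M}^{+}={\M}^{\#}$, and then conclude via ${\M}^{\mbox{\tiny\textcircled{\#}}}={\M}^{\#}{\M}{\M}^{+}={\M}^{\#}{\M}{\M}^{\#}={\M}^{\#}$. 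No gaps.
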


\begin{proof}
 Let ${\M}=A+\varepsilon B \in{\mathbb{D}}_{n, n}$,
 ${\M}^\mathrm{ T }={\M}$, $\rk\left(A\right)=r$
 and
 the dual index of ${\M}$ be one.
 According to  Theorem \ref{zhibiaoqunni-xin}, Theorem \ref{zhibiaoMPni} and Theorem \ref{core-20},
 the DGGI, DMPGI and DCGI of  $ \M  $
 exist  simultaneously.
Since ${\M}={\M}^\mathrm{T}$,
we get that
$A$ is symmetrical;
$B$ is symmetrical;
$A^2$ is symmetrical
and
 $\left(A^2\right)^{\#}=\left(A^2\right)^{+}=\left(AA^\mathrm{ T }\right)^{+}=\left(A^\mathrm{ T }A\right)^{+}$.
Therefore,
we have
\begin{align*}
A^{+}
=
A^\#,
\ \
A^{+}BA^{+}
&
=
A^{\#}BA^{\#}
\\
\left(A^\mathrm{ T }A\right)^{+}B^\mathrm{ T }\left(I_n-AA^{+}\right)
&
=
\left(A^{\#}\right)^2B\left(I_n-A^{\#}A\right),
\\
\left(I_n-A^{+}A\right)B^\mathrm{ T }\left(AA^\mathrm{ T }\right)^{+}
&
=
\left(I_n-AA^{\#}\right)B\left(A^{\#}\right)^2.
\end{align*}
Then
by applying (\ref{The-DMPGI}) and
 (\ref{qunni-gongshi}),
we derive that ${\M}^{\#}={\M}^{+}$.

It follows from ${\M}^{\#}={\M}^{+}$ and ${\M}^{\mbox{\tiny\textcircled{\#}}}={\M}^{\#}{\M}{\M}^{+}$ in (\ref{jinjinhua}) that
${\M}^{\mbox{\tiny\textcircled{\#}}}={\M}^{\#}{\M}{\M}^{+}
={\M}^{\#}{\M}{\M}^{\#}={\M}^{\#}={\M}^{+}$.
Therefore,
${\M}^{\#}
={\M}^{+}
={\M}^{\mbox{\tiny\textcircled{\#}}}$.
\end{proof}

\begin{theorem}
\label{youhefang}
If the DCGI ${\M}^{\mbox{\tiny\textcircled{\#}}}=A^{\mbox{\tiny\textcircled{\#}}}+\varepsilon R$
 of   ${\M}=A+\varepsilon B\in  {\mathbb{D}}_{n, n}$ exists,
and ${\M}$ is a symmetric dual matrix, then
\begin{align}
\label{shishi}
{\M}^{\mbox{\tiny\textcircled{\#}}}
=
\left({\M}^{\mbox{\tiny\textcircled{\#}}}\right)^\mathrm{ T }
=
A^{\mbox{\tiny\textcircled{\#}}}
+\varepsilon\left(\left(A^{\mbox{\tiny\textcircled{\#}}}\right)^{2}B\left(I-AA^{\mbox{\tiny\textcircled{\#}}}\right)
+\left(I-AA^{\mbox{\tiny\textcircled{\#}}}\right)B\left(A^{\mbox{\tiny\textcircled{\#}}}\right)^{2}\right).
\end{align}
\end{theorem}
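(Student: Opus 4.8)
The plan is to reduce the entire statement to the Moore--Penrose picture and then read off the dual part. Since the DCGI of ${\M}$ is assumed to exist, Theorem \ref{core-20} tells us that the dual index of ${\M}$ equals one; as ${\M}$ is moreover a symmetric dual matrix, Theorem \ref{gaogaogao4} applies and gives the chain ${\M}^{\mbox{\tiny\textcircled{\#}}}={\M}^{\#}={\M}^{+}$. Thus it suffices to analyze the single object ${\M}^{+}$: to show it is symmetric and to compute its dual part, both facts then transferring to ${\M}^{\mbox{\tiny\textcircled{\#}}}$.

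For the symmetry assertion ${\M}^{\mbox{\tiny\textcircled{\#}}}=({\M}^{\mbox{\tiny\textcircled{\#}}})^{\mathrm{T}}$, I would argue through uniqueness of the DMPGI. Set $Y=({\M}^{+})^{\mathrm{T}}$. Using ${\M}^{\mathrm{T}}={\M}$ and transposing each of the four defining relations of ${\M}^{+}$, one verifies that $Y$ again satisfies all four: for instance $({\M}Y{\M})^{\mathrm{T}}={\M}{\M}^{+}{\M}={\M}$ forces ${\M}Y{\M}={\M}^{\mathrm{T}}={\M}$, while ${\M}Y={\M}({\M}^{+})^{\mathrm{T}}=({\M}^{+}{\M})^{\mathrm{T}}={\M}^{+}{\M}$ is automatically symmetric, giving condition $(3)$, and symmetrically for $(2)$ and $(4)$. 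Hence $Y$ is a DMPGI of ${\M}$, and uniqueness of the DMPGI yields $({\M}^{+})^{\mathrm{T}}={\M}^{+}$. Combined with ${\M}^{\mbox{\tiny\textcircled{\#}}}={\M}^{+}$ this gives the symmetry of ${\M}^{\mbox{\tiny\textcircled{\#}}}$.

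For the explicit dual part I would start from the DMPGI formula (\ref{The-DMPGI}) and feed in the symmetry data. Because $A=A^{\mathrm{T}}$ has index one, one has $A^{+}=A^{\#}=A^{\mbox{\tiny\textcircled{\#}}}$, $AA^{+}=A^{+}A=AA^{\mbox{\tiny\textcircled{\#}}}$, and $(A^{\mathrm{T}}A)^{+}=(AA^{\mathrm{T}})^{+}=(A^{2})^{+}=(A^{\mbox{\tiny\textcircled{\#}}})^{2}$; together with $B^{\mathrm{T}}=B$ these substitutions collapse the three bracketed terms of (\ref{The-DMPGI}) into expressions purely in $A^{\mbox{\tiny\textcircled{\#}}}$, $B$, and the projector $I-AA^{\mbox{\tiny\textcircled{\#}}}$, which is exactly the shape of the claimed right-hand side.

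The step I expect to be the crux is pinning down the dual part exactly. Carrying out the substitution into (\ref{The-DMPGI}) produces ${\M}^{+}=A^{\mbox{\tiny\textcircled{\#}}}+\varepsilon\bigl(-A^{\mbox{\tiny\textcircled{\#}}}BA^{\mbox{\tiny\textcircled{\#}}}+(A^{\mbox{\tiny\textcircled{\#}}})^{2}B(I-AA^{\mbox{\tiny\textcircled{\#}}})+(I-AA^{\mbox{\tiny\textcircled{\#}}})B(A^{\mbox{\tiny\textcircled{\#}}})^{2}\bigr)$, which carries an extra leading summand $-A^{\mbox{\tiny\textcircled{\#}}}BA^{\mbox{\tiny\textcircled{\#}}}$ relative to the display in the statement; the same term is visible if one instead routes through ${\M}^{\mbox{\tiny\textcircled{\#}}}={\M}^{\#}$ and the DGGI dual part of Lemma \ref{core-yingli} with $A^{\#}=A^{\mbox{\tiny\textcircled{\#}}}$. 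This term does not vanish in general: for ${\M}=(1+\varepsilon)\,\mathrm{diag}(1,0)$ one computes ${\M}^{\mbox{\tiny\textcircled{\#}}}=(1-\varepsilon)\,\mathrm{diag}(1,0)$, whose dual part is $-A^{\mbox{\tiny\textcircled{\#}}}BA^{\mbox{\tiny\textcircled{\#}}}\neq 0$ while the two projector terms are zero. I would therefore expect the faithful form of the identity to retain $-A^{\mbox{\tiny\textcircled{\#}}}BA^{\mbox{\tiny\textcircled{\#}}}$, and the real content of the proof is to confirm the dual part through (\ref{The-DMPGI}) and track whether that term is meant to survive.
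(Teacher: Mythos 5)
Your route coincides with the paper's: existence of the DCGI gives dual index one (Theorem \ref{core-20}), symmetry of $\M$ then gives ${\M}^{\mbox{\tiny\textcircled{\#}}}={\M}^{\#}={\M}^{+}$ (Theorem \ref{gaogaogao4}), symmetry of ${\M}^{+}$ yields the transpose identity (the paper simply cites $({\M}^{+})^{\mathrm{T}}={\M}^{+}$; your uniqueness argument is a clean way to justify it), and the dual part is obtained by substituting $A^{+}=A^{\#}=A^{\mbox{\tiny\textcircled{\#}}}$, $B^{\mathrm{T}}=B$ into a known formula --- the paper routes through (\ref{changjinhua}) rather than (\ref{The-DMPGI}), but the collapse is the same computation. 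Your crux observation is correct and is the important point: the dual part one actually obtains is
\begin{align*}
-A^{\mbox{\tiny\textcircled{\#}}}BA^{\mbox{\tiny\textcircled{\#}}}
+\left(A^{\mbox{\tiny\textcircled{\#}}}\right)^{2}B\left(I-AA^{\mbox{\tiny\textcircled{\#}}}\right)
+\left(I-AA^{\mbox{\tiny\textcircled{\#}}}\right)B\left(A^{\mbox{\tiny\textcircled{\#}}}\right)^{2},
\end{align*}
and the final display of the paper's own proof ends with exactly this expression before asserting, without justification, that it equals the right-hand side of (\ref{shishi}). The leading term $-A^{\mbox{\tiny\textcircled{\#}}}BA^{\mbox{\tiny\textcircled{\#}}}$ does not vanish under the stated hypotheses: your example ${\M}=(1+\varepsilon)\,\mathrm{diag}(1,0)$ is a valid counterexample to (\ref{shishi}) as printed, since it is symmetric with dual index one, ${\M}^{\mbox{\tiny\textcircled{\#}}}=(1-\varepsilon)\,\mathrm{diag}(1,0)$ has nonzero dual part, yet both projector terms on the right-hand side of (\ref{shishi}) vanish there. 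So your argument establishes the corrected identity (with the extra term retained); the discrepancy is an error in the paper's displayed formula (\ref{shishi}), not a gap in your proof.
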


\begin{proof}

According to Theorem \ref{zhibiaoqunni-xin}, Theorem \ref{zhibiaoMPni} and Theorem \ref{core-20},
if the DCGI of ${\M}$ exists, then DGGI and DMPGI exist.

And then ${\M}^{\mbox{\tiny\textcircled{\#}}}
={\M}^{\#}{\M}{\M}^{+}={\M}^{+}{\M}{\M}^{+}={\M}^{+}$ by   (\ref{align label-1}).
 From
 $\left({\M}^{+}\right)^\mathrm{ T }={\M}^{+}$ (see \cite{Udwadia2021mamt156}),
so $\left({\M}^{\mbox{\tiny\textcircled{\#}}}\right)^\mathrm{ T }={\M}^{\mbox{\tiny\textcircled{\#}}}.$

Since ${\M}$ is a symmetric dual matrix, then
we have $A^\mathrm{ T }=A$ and $ B^\mathrm{ T }=B$.
Moreover,
$A^{\mbox{\tiny\textcircled{\#}}}=A^{\#}=A^+=(A^{\mbox{\tiny\textcircled{\#}}})^\mathrm{ T }
 =(A^{\#})^\mathrm{ T }=(A^+)^\mathrm{ T }.$

According to the formula (\ref{changjinhua}),  we can get
\begin{align*}
{\M}^{\mbox{\tiny\textcircled{\#}}}&
=A^{\mbox{\tiny\textcircled{\#}}}+
\varepsilon\left(-A^{\mbox{\tiny\textcircled{\#}}}BA^{+}+A^{\#}BA^{+}
-A^{\#}BA^{\mbox{\tiny\textcircled{\#}}}+A^{\mbox{\tiny\textcircled{\#}}}(BA^{+})^\mathrm{ T }\left(I-AA^{+}\right)+\left(I-AA^{\#}\right)BA^{\#}A^{\mbox{\tiny\textcircled{\#}}}\right)
\\
&=A^{\mbox{\tiny\textcircled{\#}}}+\varepsilon\left(-A^{\mbox{\tiny\textcircled{\#}}}BA^{\mbox{\tiny\textcircled{\#}}}
+A^{\mbox{\tiny\textcircled{\#}}}\left(BA^{+}\right)^\mathrm{ T }\left(I-AA^{+}\right)
+\left(I-AA^{\#}\right)BA^{\#}A^{\mbox{\tiny\textcircled{\#}}}\right)
\\
&=A^{\mbox{\tiny\textcircled{\#}}}+\varepsilon\left(-A^{\mbox{\tiny\textcircled{\#}}}BA^{\mbox{\tiny\textcircled{\#}}}
+A^{\mbox{\tiny\textcircled{\#}}}\left(BA^{\mbox{\tiny\textcircled{\#}}}\right)^\mathrm{ T }(I-AA^{\mbox{\tiny\textcircled{\#}}})+\left(I-AA^{\mbox{\tiny\textcircled{\#}}}\right)BA^{\mbox{\tiny\textcircled{\#}}}A^{\mbox{\tiny\textcircled{\#}}}\right)
\\
&=A^{\mbox{\tiny\textcircled{\#}}}+\varepsilon\left(-A^{\mbox{\tiny\textcircled{\#}}}BA^{\mbox{\tiny\textcircled{\#}}}
+A^{\mbox{\tiny\textcircled{\#}}}\left(A^{\mbox{\tiny\textcircled{\#}}}\right)^\mathrm{ T }B^\mathrm{ T }\left(I-AA^{\mbox{\tiny\textcircled{\#}}}\right)
+\left(I-AA^{\mbox{\tiny\textcircled{\#}}}\right)BA^{\mbox{\tiny\textcircled{\#}}}A^{\mbox{\tiny\textcircled{\#}}}\right)
\\
&=A^{\mbox{\tiny\textcircled{\#}}}+\varepsilon\left(-A^{\mbox{\tiny\textcircled{\#}}}BA^{\mbox{\tiny\textcircled{\#}}}
+\left(A^{\mbox{\tiny\textcircled{\#}}}\right)^{2}B \left(I-AA^{\mbox{\tiny\textcircled{\#}}}\right)
+\left(I-AA^{\mbox{\tiny\textcircled{\#}}}\right)B\left(A^{\mbox{\tiny\textcircled{\#}}}\right)^{2}\right),
\end{align*}
that is, (\ref{shishi}).
\end{proof}

Next, we consider  some properties   of DCGI, DGGI and DMPGI
in special forms when the research object is a symmetric dual matrix.

\begin{lemma}[\cite{Marsaglia1974LMA269}]
\label{lemmalemma}
 Let $A \in{\mathbb{R}}_{n, n}$ and $B\in{\mathbb{R}}_{n, n}$, then
 $\mathcal{R}\left(B\right)\subseteq \mathcal{R}\left(A\right)$
  if and only if
  $\rk \left(\left[\begin{matrix}A&B\end{matrix}\right]\right)=\rk \left(A\right).$
\end{lemma}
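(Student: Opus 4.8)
The plan is to reduce the rank identity to a statement about column spaces and then settle it by a dimension count. The key observation I would establish first is that the range of the block matrix $\left[\begin{matrix}A&B\end{matrix}\right]$ is exactly the sum of the two ranges: since
\[
\left[\begin{matrix}A&B\end{matrix}\right]\left[\begin{matrix}x\\y\end{matrix}\right]=Ax+By,
\]
a vector lies in $\R\left(\left[\begin{matrix}A&B\end{matrix}\right]\right)$ if and only if it has the form $Ax+By$, whence $\R\left(\left[\begin{matrix}A&B\end{matrix}\right]\right)=\R(A)+\R(B)$. Because $\rk$ of a matrix is the dimension of its range, both sides of the claimed equivalence can then be rephrased purely in terms of the subspaces $\R(A)$, $\R(B)$ and their sum.

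For the forward direction I would assume $\R(B)\subseteq\R(A)$. Then $\R(A)+\R(B)=\R(A)$, so taking dimensions yields $\rk\left(\left[\begin{matrix}A&B\end{matrix}\right]\right)=\dim\bigl(\R(A)+\R(B)\bigr)=\dim\R(A)=\rk(A)$, which is the desired rank equality.

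For the converse I would assume $\rk\left(\left[\begin{matrix}A&B\end{matrix}\right]\right)=\rk(A)$. One always has the inclusion $\R(A)\subseteq\R(A)+\R(B)=\R\left(\left[\begin{matrix}A&B\end{matrix}\right]\right)$, and the hypothesis says these two subspaces have the same dimension. Since a subspace contained in a finite-dimensional superspace of equal dimension must coincide with it, this forces $\R(A)=\R(A)+\R(B)$, and therefore $\R(B)\subseteq\R(A)$.

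The whole argument is elementary, so there is no serious obstacle; the single point deserving care is the dimension-counting step in the converse, where one upgrades the inclusion $\R(A)\subseteq\R(A)+\R(B)$ to an equality of subspaces by invoking equality of dimensions. If an explicitly algebraic formulation were preferred, an alternative would be to note that $\R(B)\subseteq\R(A)$ is equivalent to the solvability of $AX=B$, i.e. to $\left(I_n-AA^{+}\right)B=0$, and to check that this is in turn equivalent to the rank condition; but the subspace/dimension argument above is the most direct.
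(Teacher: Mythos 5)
Your argument is correct and complete: the identification $\R\left(\left[\begin{matrix}A&B\end{matrix}\right]\right)=\R(A)+\R(B)$ plus the dimension count in each direction is exactly the standard proof of this fact. Note that the paper itself gives no proof here --- the lemma is imported verbatim from Marsaglia and Styan --- so there is no authorial argument to compare against; your elementary subspace/dimension reasoning (including the careful step upgrading the inclusion $\R(A)\subseteq\R(A)+\R(B)$ to an equality via equal finite dimensions) is sound and would serve as a self-contained justification. The alternative you mention, via solvability of $AX=B$ and the condition $\left(I_n-AA^{+}\right)B=0$, is also valid and is in fact the form in which the paper uses the lemma just before Theorem \ref{suibian-2}.
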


\begin{theorem}
\label{gaogaogaogao6}
Let ${\M}=A+\varepsilon B$ be an $n$-order symmetric dual matrix.
Then the following conditions are equivalent:
\begin{description}
  \item[(1)]
The DCGI ${\M}^{\mbox{\tiny\textcircled{\#}}}$ of
 ${\M} $ exists,
 and
  ${\M}^{\mbox{\tiny\textcircled{\#}}}={\M}^{\#}={\M}^{+}
=
{\M}^{\PP}
=
A^{+}-\varepsilon A^{+}BA^{+}$;
  \item[(2)]
  $\left(I _n-AA^{+}\right)B=0$
  or $\left(I _n-AA^{\#}\right)B=0$
  or $\left(I _n-AA^{\mbox{\tiny\textcircled{\#}}}\right)B=0$;
  \item[(3)]
 $\rk \left(\left[\begin{matrix}A&B\end{matrix}\right]\right)=\rk \left(A\right);$
  \item[(4)]
 $\mathcal{R}\left(B\right)\subseteq \mathcal{R}\left(A\right)$;
  \item[(5)]
  $B\left(I _n-AA^{+}\right)=0$
  or $B\left(I _n-AA^{\#}\right)=0$
  or $B\left(I _n-AA^{\mbox{\tiny\textcircled{\#}}}\right)=0$.
  \end{description}
\end{theorem}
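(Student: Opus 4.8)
The plan is to use the symmetry of $\M$ to collapse almost everything to a single range condition, and then to feed that condition into the structural theorems already established. Since $\M^\mathrm{T}=\M$ forces $A^\mathrm{T}=A$ and $B^\mathrm{T}=B$, the first step I would take is to record the consequences for the real part: a real symmetric $A$ satisfies $\mathcal{R}(A)=\mathcal{R}(A^\mathrm{T})$, whence $\mathrm{Ind}(A)=1$ holds automatically and the one-sided inverses coincide, $A^{+}=A^{\#}=A^{\mbox{\tiny\textcircled{\#}}}$. Consequently the idempotents $AA^{+}$, $AA^{\#}$, $AA^{\mbox{\tiny\textcircled{\#}}}$ and $A^{+}A$ all reduce to a single symmetric orthogonal projector $P:=AA^{+}$ with $\mathcal{R}(P)=\mathcal{R}(A)$. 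This immediately identifies the three alternatives inside condition (2) with the single equation $(I_n-P)B=0$, and the three inside (5) with $B(I_n-P)=0$.

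Next I would dispatch the purely range-theoretic equivalences among (2), (3), (4), (5). The equivalence (3)$\Leftrightarrow$(4) is exactly Lemma \ref{lemmalemma}. For (2)$\Leftrightarrow$(4) I would rewrite $(I_n-P)B=0$ as $PB=B$, i.e.\ every column of $B$ lies in $\mathcal{R}(P)=\mathcal{R}(A)$, which is precisely $\mathcal{R}(B)\subseteq\mathcal{R}(A)$. The equivalence (2)$\Leftrightarrow$(5) is where the symmetry of $B$ is essential: since $P$ and $B$ are both symmetric, transposition gives $\bigl((I_n-P)B\bigr)^\mathrm{T}=B(I_n-P)$, so one side vanishes if and only if the other does.

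The substantive content is the link to (1), which I would prove as (2)$\Leftrightarrow$(1). For (2)$\Rightarrow$(1): assuming $(I_n-P)B=0$, right multiplication by $I_n-AA^{\#}$ yields $(I_n-AA^{+})B(I_n-AA^{\#})=0$, so Theorem \ref{core-dengjia} (together with ${\rm Ind}(A)=1$) guarantees that the DCGI exists, and Theorem \ref{suibian} then identifies it as $A^{\mbox{\tiny\textcircled{\#}}}-\varepsilon A^{\mbox{\tiny\textcircled{\#}}}BA^{\mbox{\tiny\textcircled{\#}}}$, which equals $A^{+}-\varepsilon A^{+}BA^{+}=\M^{\PP}$ because $A^{\mbox{\tiny\textcircled{\#}}}=A^{+}$. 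Existence of the DCGI makes the dual index one by Theorem \ref{core-20}, so Theorem \ref{gaogaogao4} forces $\M^{\#}=\M^{+}=\M^{\mbox{\tiny\textcircled{\#}}}$; and since the pair $(I_n-AA^{+})B=0$, $B(I_n-A^{+}A)=0$ both hold (the second again by transposition and $A^{+}A=P$), Lemma \ref{lemma2} gives $\M^{+}=\M^{\PP}$. Chaining these equalities delivers condition (1) in full. For the converse (1)$\Rightarrow$(2): condition (1) in particular asserts that the DCGI exists with $\M^{\mbox{\tiny\textcircled{\#}}}=A^{+}-\varepsilon A^{+}BA^{+}=A^{\mbox{\tiny\textcircled{\#}}}-\varepsilon A^{\mbox{\tiny\textcircled{\#}}}BA^{\mbox{\tiny\textcircled{\#}}}$, and Theorem \ref{suibian} then forces $(I_n-AA^{+})B=0$, i.e.\ (2).

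I expect the main difficulty to be bookkeeping rather than genuine computation: one must verify that the hypotheses of each invoked result are actually met under symmetry — in particular that the single scalar equation $(I_n-P)B=0$ simultaneously supplies both two-sided vanishing conditions demanded by Lemma \ref{lemma2}, and that existence is drawn from Theorem \ref{core-dengjia} rather than from Theorem \ref{suibian} (the latter presupposes it). Once the coincidence $A^{+}=A^{\#}=A^{\mbox{\tiny\textcircled{\#}}}$ and the symmetry of $P$ and $B$ are in hand, every cited hypothesis follows directly, so no fresh manipulation of the decomposition $(\ref{core-1})$--$(\ref{core-7})$ should be needed.
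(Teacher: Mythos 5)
Your proposal is correct and follows essentially the same route as the paper: use symmetry to collapse $A^{+}=A^{\#}=A^{\mbox{\tiny\textcircled{\#}}}$ and reduce (2) and (5) to a single projector equation, then chain the equalities via Theorem \ref{gaogaogao4} and settle (3)$\Leftrightarrow$(4) by Lemma \ref{lemmalemma}. The only cosmetic difference is that you link (1) to (2) through the DCGI-form results (Theorems \ref{core-dengjia} and \ref{suibian}, correctly establishing existence before invoking the latter) where the paper uses the parallel DGGI-form result (Theorem \ref{gaogaogao5}) and reaches (3) via Theorem \ref{suibian-2}; both routes rest on the same underlying facts.
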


\begin{proof}

 Let ${\M}\in{\mathbb{D}}_{n, n}$,  $\rk\left(A\right)=r$ and  ${\M}^T= {\M}$.
  Since ${\M}$ is a symmetric dual matrix,
  then the real part $A$ is symmetric,
and
$A^{\mbox{\tiny\textcircled{\#}}}=A^{+}=A^{\#}$.
Therefore, (1) is ${\M}^{\mbox{\tiny\textcircled{\#}}}
={\M}^{\#}={\M}^{+}={\M}^{\PP}
=A^{+}-\varepsilon A^{+}BA^{+}=A^{\#}-\varepsilon A^{\#}BA^{\#}
=A^{\mbox{\tiny\textcircled{\#}}}-\varepsilon A^{\mbox{\tiny\textcircled{\#}}}BA^{\mbox{\tiny\textcircled{\#}}}$.
And (2) , (5) are equivalent.

``$(1)\Longrightarrow(2)$''
\
 Let ${\M}^{\mbox{\tiny\textcircled{\#}}}={\M}^{\#}={\M}^{+}=
{\M}^{\PP}=A^{\#}-\varepsilon A^{\#}BA^{\#}$,
from Theorem \ref{gaogaogao5},
 $B\left(I_n-AA^{\#}\right)=0$ and $\left(I_n-AA^{\#}\right)B=0$.
 Therefore, we get $\left(I_n-AA^{\#}\right)B=0$.
 And because $A^{\mbox{\tiny\textcircled{\#}}}=A^{+}=A^{\#}$,
 (2) is established.

``$(1)\Longleftarrow(2)$''
\
Let $\left(I _n-AA^{\#}\right)B=0$.
Because ${\M}$ is symmetrical,
 $B\left(I _n-AA^{\#}\right)=0$.
Applying Theorem \ref{gaogaogao5} gives that DGGI ${\M}^{\#}$ of  ${\M}=A+\varepsilon B$ exists
 and ${\M}^{\#}=A^{\#}-\varepsilon {A^{\#}BA^{\#}}$.

 According to Theorem \ref{zhibiaoqunni-xin},
 Theorem \ref{zhibiaoMPni} and Theorem \ref{core-20},
 when ${\M}^{\#}$ exists,
 ${\M}^{\mbox{\tiny\textcircled{\#}}}$
 and ${\M}^{+}$ exist,
 and the dual index of ${\M}$ is 1.
According to Theorem \ref{gaogaogao4},
${\M}^{\mbox{\tiny\textcircled{\#}}}={\M}^{+}={\M}^{\#}$.
Then ${\M}^{\mbox{\tiny\textcircled{\#}}}={\M}^{+}
={\M}^{\#}=A^{\#}-\varepsilon {A^{\#}BA^{\#}}$.
Considering that ${\M}$ is a symmetric dual matrix,
then $A^{\mbox{\tiny\textcircled{\#}}}=A^{+}=A^{\#}$.
Therefore, (1) is established.

According to Theorem \ref{suibian-2}, (1) and (3) are equivalent.
According to Lemma \ref{lemmalemma}, (3) and (4) are equivalent.
\end{proof}
According to Theorem \ref{gaogaogao4} and Theorem \ref{youhefang},
 it is easy to get the following Corollary \ref{zuihouyige}.

\begin{corollary}
\label{zuihouyige}
Let ${\M}=A+\varepsilon B\in  {\mathbb{D}}_{n, n}$ be a symmetric dual matrix.
If the DCGI, DGGI, DMPGI  of ${\M}$ exist, then  they are symmetrical and equal.
\end{corollary}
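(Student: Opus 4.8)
The plan is to obtain this Corollary as a direct consequence of Theorem \ref{gaogaogao4} and Theorem \ref{youhefang}, together with the equivalences established in Section \ref{Section-DualIndex}. First I would observe that the hypotheses already force the dual index of ${\M}$ to equal one: by Theorem \ref{core-20} the existence of the DCGI ${\M}^{\mbox{\tiny\textcircled{\#}}}$ is equivalent to the dual index of ${\M}$ being one, so under our assumptions this condition holds automatically. (By Theorem \ref{zhibiaoqunni-xin} and Theorem \ref{zhibiaoMPni}, the simultaneous existence of the DGGI ${\M}^{\#}$ and the DMPGI ${\M}^{+}$ is consistent with this, so the three inverses are indeed all available.)

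Next I would invoke Theorem \ref{gaogaogao4}: since ${\M}$ is a symmetric dual matrix whose dual index is one, that theorem immediately yields the chain of equalities ${\M}^{\#}={\M}^{+}={\M}^{\mbox{\tiny\textcircled{\#}}}$. This settles the equality half of the statement, namely that the three dual generalized inverses coincide. For the symmetry half I would apply Theorem \ref{youhefang}: because the DCGI of the symmetric dual matrix ${\M}$ exists, that theorem gives $\left({\M}^{\mbox{\tiny\textcircled{\#}}}\right)^\mathrm{T}={\M}^{\mbox{\tiny\textcircled{\#}}}$, i.e.\ the DCGI is symmetric. Combining this with the equalities from the previous step, the DGGI and DMPGI—being equal to the DCGI—are symmetric as well, so all three are simultaneously symmetrical and equal.

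Since everything reduces to two already-proved theorems, I do not expect any genuine obstacle here; this really is a corollary rather than a theorem. The only point requiring a word of care is confirming that the symmetry established for ${\M}^{\mbox{\tiny\textcircled{\#}}}$ transfers to ${\M}^{\#}$ and ${\M}^{+}$, but this is immediate from the identities ${\M}^{\#}={\M}^{+}={\M}^{\mbox{\tiny\textcircled{\#}}}$ and so needs no separate computation.
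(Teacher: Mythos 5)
Your proposal is correct and follows exactly the route the paper intends: the paper derives this corollary directly from Theorem \ref{gaogaogao4} (equality of the three inverses for a symmetric dual matrix of dual index one, the index condition being supplied by Theorem \ref{core-20}) and Theorem \ref{youhefang} (symmetry of the DCGI), with the symmetry of the DGGI and DMPGI then following from the equalities. No gaps; your write-up simply makes explicit the details the paper leaves as "easy to get."
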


\section{Applications of DCGI in Linear Dual Equations}

In this section, we use two  examples to illustrate some application of DCGI
 in  solving linear dual equations.

First we consider solving a consistent linear dual equation  by DCGI in Example \ref{example-3.1}.
    We give a general solution to the consistent dual equation.

\begin{example}
\label{example-3.1}
Let ${\M}\widehat{x}=\widehat{b}$ be a consistent equation,
where
\begin{align}
\nonumber
{\M}=A+\varepsilon B=
\left[\begin{matrix}
       1&      0
\\     0&      0
\end{matrix}\right]+\varepsilon
\left[\begin{matrix}
       1&      1
\\     1&      0
\end{matrix}\right]=
\left[\begin{matrix}
       1+\varepsilon&      \varepsilon
\\       \varepsilon&          0
\end{matrix}\right],\end{align}
$\widehat{b}=
\left[\begin{matrix}
       1
\\     0
\end{matrix}\right]+
\varepsilon\left[\begin{matrix}
       1
\\     1
\end{matrix}\right]=
\left[\begin{matrix}
       1+\varepsilon
\\       \varepsilon
\end{matrix}\right]$
and
$\widehat{x}=
\left[\begin{matrix}
      \widehat{x_1}
\\    \widehat{x_2}
\end{matrix}\right], \widehat{x_i}=x_i+\varepsilon x_i^{'}, i=1,2$.

It is easy to check that
$\rk \left(A^{2}\right)
=\rk \left(A\right)
=\rk\left (\left[\begin{matrix}A&B(I_n-AA^{\#}\end{matrix}\right]\right)=1$.
By applying Theorem \ref{The-2-4},
we get that
the dual index of ${\M}$ is one,
that is,
the DCGI ${\M}^{\mbox{\tiny\textcircled{\#}}}$  exists.
Applying $(\ref{changjinhua})$ gives
\begin{align}
\label{shishi}
{\M}^{\mbox{\tiny\textcircled{\#}}}=G+\varepsilon R=
\left[\begin{matrix}
       1&      0
\\     0&      0
\end{matrix}\right]+\varepsilon
\left[\begin{matrix}
       -1&      1
\\      1&      0
\end{matrix}\right]=
\left[\begin{matrix}
       1-\varepsilon&      \varepsilon
\\       \varepsilon&          0
\end{matrix}\right].
\end{align}
Thus,
 $${\M}^{\mbox{\tiny\textcircled{\#}}}\widehat{b}=
\left[\begin{matrix}
       1-\varepsilon&      \varepsilon
\\       \varepsilon&          0
\end{matrix}\right]
\left[\begin{matrix}
       1+\varepsilon
\\       \varepsilon
\end{matrix}\right]=
\left[\begin{matrix}
               1
\\       \varepsilon
\end{matrix}\right].$$

\

Furthermore,
let
\begin{align}
\label{lizi-1}
\widehat{x}
=
{\M}^{\mbox{\tiny\textcircled{\#}}}\widehat{b}+\left(I-{\M}^{\mbox{\tiny\textcircled{\#}}}{\M}\right)\widehat{w}
=
\left[\begin{matrix}
              1
\\       \varepsilon
\end{matrix}\right]+
\left[\begin{matrix}
                 0 &      -\varepsilon
\\       -\varepsilon&          1
\end{matrix}\right]\widehat{w},
\end{align}
where $\widehat{w}$ is an arbitary n-by-1 dual column vector.

By substituting  $(\ref{lizi-1})$ into ${\M}\widehat{x}=\widehat{b}$,
we can get
\begin{align*}
{\M}\widehat{x}-\widehat{b}
&
=
{\M}({\M}^{\mbox{\tiny\textcircled{\#}}}\widehat{b}
+\left(I-{\M}^{\mbox{\tiny\textcircled{\#}}}{\M}\right)\widehat{w})
-\widehat{b}
=
{\M}{\M}^{\mbox{\tiny\textcircled{\#}}}\widehat{b}-\widehat{b}
+\left({\M}-{\M}{\M}^{\mbox{\tiny\textcircled{\#}}}{\M}\right)\widehat{w}
\\
&
=
\left[\begin{matrix}
       1+\varepsilon&      \varepsilon
\\       \varepsilon&          0
\end{matrix}\right]
\left[\begin{matrix}
              1
\\       \varepsilon
\end{matrix}\right]-\widehat{b}
=
\left[\begin{matrix}
       1+\varepsilon
\\      \varepsilon
\end{matrix}\right]-\widehat{b}
=
\left[\begin{matrix}
       1+\varepsilon
\\      \varepsilon
\end{matrix}\right]-
\left[\begin{matrix}
       1+\varepsilon
\\      \varepsilon
\end{matrix}\right]
=0,
\end{align*}
that is,
$(\ref{lizi-1})$ is the solution to ${\M}\widehat{x}=\widehat{b}$.

Meanwhile,
let $\widehat{x}$ be any solution to ${\M}\widehat{x}=\widehat{b}$.
Pre-multiplying ${\M}\widehat{x}=\widehat{b}$ by ${\M}^{\mbox{\tiny\textcircled{\#}}}$, we get
${\M}^{\mbox{\tiny\textcircled{\#}}}{\M}\widehat{x}={\M}^{\mbox{\tiny\textcircled{\#}}}\widehat{b}$,
then
 \begin{align}
 \nonumber
 \widehat{x}
 ={\M}^{\mbox{\tiny\textcircled{\#}}}\widehat{b}+\widehat{x}-{\M}^{\mbox{\tiny\textcircled{\#}}}{\M}\widehat{x}
 ={\M}^{\mbox{\tiny\textcircled{\#}}}\widehat{b}+\left(I-{\M}^{\mbox{\tiny\textcircled{\#}}}{\M}\right)\widehat{x}
 =\left[\begin{matrix}
             1
\\       \varepsilon
\end{matrix}\right]+
\left[\begin{matrix}
                  0 &      -\varepsilon
\\       -\varepsilon&          1
\end{matrix}\right]\widehat{x}.
\end{align}
Therefore,
each solution to ${\M}\widehat{x}=\widehat{b}$
 can be written as $(\ref{lizi-1})$ in which  $\widehat{w}=\widehat{x}$.

To sum up,
 (\ref{lizi-1}) is the general solution to ${\M}\widehat{x}=\widehat{b}$.
\end{example}

In order to solve inconsistent dual linear equation,
Udwadia \cite{Udwadia2021mamt156}
introduces the norm of the dual vector.
Consider the $m$-by-$1$ dual vector $\widehat{u_{i}}=p_{i}+\varepsilon q_{i}$.
Write
\begin{align}
\label{fanshu}
\Vert \widehat{u_{i}} \Vert^{2}=
\left(p_{i}+\varepsilon q_{i}\right)^\mathrm{ T }\left(p_{i}+\varepsilon q_{i}\right)
=\Vert {p_{i}} \Vert^{2}+2\varepsilon p_{i}^\mathrm{ T }q_{i},
\end{align}
where $p_{i}\neq 0$ and $\Vert {p_{i}} \Vert^{2}=p_{i}^\mathrm{ T }{p_{i}}$.
By using the right-most expression in  (\ref{fanshu}),
one  norm of the dual vector $\widehat{u_{i}}$ is given as
\begin{align}
\langle \widehat{u_{i}} \rangle := \Vert p_i\Vert+\Vert q_i\Vert.
\end{align}
%
In  (\ref{fanshu}),
  the dual norm is used to determine the magnitude of the error.
In \cite{Udwadia2021mamt156}, Udwadia introduce the analog of the least-squares solution of any inconsistent dual equation
 ${\M}\widehat{x}=\widehat{b}$ and gives the corresponding solution - analog of the least-squares solution
$\widehat{x}={\M}^{\left(1,3\right)}\widehat{b}+\left(I-{\M}^{\left(1,3\right)}{\M}\right)\widehat{h}$, where $\widehat{h}$ is an arbitrary dual column vector and ${\M}^{\left(1,3\right)}$ exists.
It can be found that the real part $x$ of
the analog of the least-squares solution is the least-squares solution to equation $Ax = b,$
 where $A$, $b$ and $x$ are matched
 with the real parts of ${\M}$, $\widehat{b}$ and $\widehat{x}$ respectively.

\begin{example}
\label{example-3.2}
Let the inconsistent equation be ${\M}\widehat{x}=\widehat{b}$, where
${\M}=A+\varepsilon B=
\left[\begin{matrix}
       4&      2
\\     2&      1
\end{matrix}\right]+\varepsilon
\left[\begin{matrix}
       10&      10
\\      9&      7
\end{matrix}\right]=
\left[\begin{matrix}
       4+10\varepsilon&      2+10\varepsilon
\\      2+9\varepsilon&       1+7\varepsilon
\end{matrix}\right],$
$\widehat{b}=
\left[\begin{matrix}
       0
\\     1
\end{matrix}\right]+
\varepsilon\left[\begin{matrix}
       1
\\     0
\end{matrix}\right]=
\left[\begin{matrix}
       \varepsilon
\\     1
\end{matrix}\right]
$
and
$\widehat{x}=
\left[\begin{matrix}
      \widehat{x_1}
\\    \widehat{x_2}
\end{matrix}\right], \widehat{x_i}=x_i+\varepsilon x_i^{'}, i=1,2$,
that is,
\begin{align}
\label{20220525-4}
\left[\begin{matrix}
        4+10\varepsilon&      2+10\varepsilon
\\      2+9\varepsilon&       1+7\varepsilon
\end{matrix}\right]
\left[\begin{matrix}
       \widehat{x_1}
\\     \widehat{x_2}
\end{matrix}\right]=
\left[\begin{matrix}
        \varepsilon
\\           1
\end{matrix}\right].
\end{align}
Then
\begin{align}
\nonumber
{\M}^{+}=
\left[\begin{matrix}
       0.1600-0.6880\varepsilon&      0.0800-0.1840\varepsilon
\\     0.0800-0.1440\varepsilon&      0.0400+0.0080\varepsilon
\end{matrix}\right].
\end{align}
By applying the Result 12 of  Udwadia in \cite{Udwadia2021mamt156},
the analogue of the least-squares of the inconsistent equation is
\begin{align}
\nonumber
 \widehat{x}&={\M}^{+}\widehat{b}+\left(I-{\M}^{+}{\M}\right)\widehat{h}
 \\
\nonumber
 &=
\left[\begin{matrix}
       0.0800-0.0240\varepsilon
\\     0.0400+0.0880\varepsilon
\end{matrix}\right]+
\left[\begin{matrix}
       0.2000+0.8000\varepsilon&     -0.4000-0.6000\varepsilon
\\    -0.4000-0.8000\varepsilon&      0.8000-0.8000\varepsilon
\end{matrix}\right]\widehat{h},
 \end{align}
 where $\widehat{h}$ is an arbitrary $n$-by-$1$ dual vector.
The norm of the error
\begin{align}
\label{20220525-1}
\langle \widehat{e^{*}} \rangle
=\langle {\M}\widehat{x}-\widehat{b} \rangle
=\langle {\M}{\M}^{+}\widehat{b}-\widehat{b} \rangle
 = \langle \left[\begin{matrix}
        0.4000
\\     -0.8000
\end{matrix}\right] \rangle+\varepsilon\langle \left[\begin{matrix}       0.2800\\     1.0400\end{matrix}\right]\rangle
=1.9715 .
 \end{align}

According to Theorem \ref{The-2-4},
we have
$\rk (A^{2})=\rk (A)
=\rk\left (\left[\begin{matrix}A&B(I_n-AA^{\#}\end{matrix}\right]\right)=1$,
that is,
the dual index of ${\M}$ is one.
 And from Theorem \ref{core-20},
 it can be seen that ${\M}^{\mbox{\tiny\textcircled{\#}}}$ exists.
 Then
\begin{align*}
{\M}^{\mbox{\tiny\textcircled{\#}}}
=G+\varepsilon R
=
\left[\begin{matrix}
       0.1600-0.6720\varepsilon&      0.0800-0.1760\varepsilon
\\     0.0800-0.1760\varepsilon&      0.0400-0.0080\varepsilon
\end{matrix}\right]
\end{align*}
 by the compact formula $(\ref{changjinhua})$.
Denote
\begin{align}
\label{20220525-3}
\widehat{x}
&=
{\M}^{\mbox{\tiny\textcircled{\#}}}\widehat{b}+\left(I-{\M}^{\mbox{\tiny\textcircled{\#}}}{\M}\right)\widehat{w}
\\
\nonumber
&=
\left[\begin{matrix}
       0.0800-0.0160\varepsilon
\\     0.0400+0.0720\varepsilon
\end{matrix}\right]+
\left[\begin{matrix}
       0.2000+0.7200\varepsilon&     -0.4000-0.6400\varepsilon
\\    -0.4000-0.4400\varepsilon&      0.8000-0.7200\varepsilon
\end{matrix}\right]\widehat{w},
\end{align}
 where $\widehat{w}$ is an arbitrary n-by-1 dual vector.
 The norm of the error is
\begin{align}
\nonumber
\langle \widehat{e} \rangle
&=\langle {\M}\widehat{x}-\widehat{b} \rangle
=\langle {\M}{\M}^{\mbox{\tiny\textcircled{\#}}}\widehat{b}-\widehat{b} \rangle
 =\langle \widehat{u_2} \rangle=\left\| m_2\right\|+\left\| n_2\right\|
 \\
 \label{20220525-2}
 & =\left\|  \left[\begin{matrix}
        0.4000
\\     -0.8000
\end{matrix}\right] \right\| +
\left\|  \left[\begin{matrix}
       0.2800
\\     1.0400
\end{matrix}\right] \right\|=1.9715.
\end{align}

Therefore,
from (\ref{20220525-1})  and  (\ref{20220525-2}),
we see that   $\langle \widehat{e^{*}} \rangle=\langle \widehat{e} \rangle=1.9715$,
that is,
(\ref{20220525-3})
is also the analog of the least-squares solution  of
(\ref{20220525-4}).

\end{example}

The two examples in this section calculate the DCGIs of the two dual matrices through the compact formula (\ref{changjinhua}).
When the dual index of any dual matrix is one, its DCGI exists.
On this basis, we can obtain DCGI directly through the compact formula (\ref{changjinhua}).
However, in order to reduce the amount of calculation, we can first consider equation (\ref{core-jianhuan}) in Theorem \ref{suibian}.
If the dual matrix satisfies equation (\ref{core-jianhuan}), then  ${\M}^{\mbox{\tiny\textcircled{\#}}}
=A^{\mbox{\tiny\textcircled{\#}}}-\varepsilon {A^{\mbox{\tiny\textcircled{\#}}}BA^{\mbox{\tiny\textcircled{\#}}}}$.
 Otherwise, we have to use the the compact formula (\ref{changjinhua}).

\section{Conclusions}
The first part of this paper provides some new findings about dual index one of dual matrices, including characterizations of the dual index one. We obtain that DGGI exists if and only if the dual index is one.  Also, when the dual index is one, DMPGI exists and the real part index of the dual Moore-Penrose generalized invertible matrix is one, and vice versa.
 The second part of this paper explores DCGI systematically. Some results from the second part of the paper are as follows:

1.	If a dual core generalized inverse (DCGI) of a dual matrix exists, it is unique.

2.	If DCGI exists, a compact formula for DCGI is given.

3.	We provide a series of equivalent characterizations of the existence of DCGI, for example,
 the dual index is one if and only if DCGI exists.

4.	Relations among MPDGI, DMPGI, DCGI and DGGI are proved.

In the third part, DCGI is applied to linear dual equations through a consistent dual equation and an inconsistent dual equation.

%

\section*{Disclosure statement}
No potential conflict of interest was reported by the authors.



\end{document}